\newcommand{\ci}[1]{_{{}_{\!\scriptstyle{#1}}}}
\newcommand{\cin}[1]{_{{}_{\scriptstyle{#1}}}}
\newcommand{\lcs}[1]{\lesssim\cin{#1}}
\def\phqu{{\tfrac{|\om|}\sigma}}
\def\bess{{\mathscr{J}\ci{d_2}\!}}
\def\mut{t}
\def\cal{\mathcal}
\def\frak{\mathfrak}
\def\Bbb{\mathbb}
\def\g{\frak g}
\def\z{\frak z}
\def\tr{{\rm tr}}
\def\lan{\langle}
\def\ran{\rangle}
\def\vpi{{\varpi}}
\def\pa{\partial}
\def\si{{\sigma}}
\def\de{{\delta}}
\def\ka{{\kappa}}
\def\var{{\varphi}}
\def\al{\alpha}
\def\NN{\Bbb N}
\def\ZZ{\Bbb Z}
\def\RR{\Bbb R}
\def\HH{{\Bbb H}}
\def \trans{\,{}^t\!}
\def\bpm{\begin{pmatrix}}
\def\epm{\end{pmatrix}}
\def\bee{\begin{enumerate}}
\def\ee{\end{enumerate}}
\newcommand{\Be}{\begin{equation}}
\newcommand{\Ee}{\end{equation}}
\newcommand{\Bm}{\begin{multline}}
\newcommand{\Em}{\end{multline}}
\newcommand{\Bea}{\begin{eqnarray}}
\newcommand{\Eea}{\end{eqnarray}}
\newcommand{\Beas}{\begin{eqnarray*}}
\newcommand{\Eeas}{\end{eqnarray*}}
\newcommand{\Benu}{\begin{enumerate}}
\newcommand{\Eenu}{\end{enumerate}}
\newcommand{\Bi}{\begin{itemize}}
\newcommand{\Ei}{\end{itemize}}
\def\Re{\operatorname{Re\,}}
\def\Im{\operatorname{Im\,}}
\def\vth{\vartheta}
\def\emph#1{{\it #1 }}
\def\ga{\gamma}
\def\cf{{\it cf}}
\def\sgn{{\text{\rm sign}}}
\def\supp{{\text{\rm supp}}}
\def\rad{{\text{\rm rad}}}
\def\inn#1#2{\langle#1,#2\rangle}
\def\biginn#1#2{\big\langle#1,#2\big\rangle}
\def\noi{\noindent}
\def\meas{{\text{\rm meas}}}
\def\lc{\lesssim}
\def\gc{\gtrsim}
\def\eps{\varepsilon}
\def\ka{\kappa}
\def\la{\lambda}             \def\La{\Lambda}
\def\si{\sigma}              
\def\vphi{\varphi}
\def\om{\omega}              \def\Om{\Omega}
\def\ups{\upsilon}
\def\fA{{\mathfrak {A}}}
\def\fC{{\mathfrak {C}}}
\def\fF{{\mathfrak {F}}}
\def\fI{{\mathfrak {I}}}
\def\fT{{\mathfrak {T}}}
\def\fg{{\mathfrak {g}}}
\def\bbC{{\mathbb {C}}}
\def\bbH{{\mathbb {H}}}
\def\bbN{{\mathbb {N}}}
\def\bbR{{\mathbb {R}}}
\def\bbZ{{\mathbb {Z}}}
\def\cA{{\mathcal {A}}}
\def\cF{{\mathcal {F}}}
\def\cI{{\mathcal {I}}}
\def\cJ{{\mathcal {J}}}
\def\cK{{\mathcal {K}}}
\def\cL{{\mathcal {L}}}
\def\cN{{\mathcal {N}}}
\def\cP{{\mathcal {P}}}
\def\cR{{\mathcal {R}}}
\def\cS{{\mathcal {S}}}
\def\cT{{\mathcal {T}}}
\def\cV{{\mathcal {V}}}
\def\cW{{\mathcal {W}}}
\def\iso{{\text{\rm iso}}}
\theoremstyle{plain}
   \newtheorem{theorem}{Theorem}[section]
   \newtheorem{prop}[theorem]{Proposition}
   \newtheorem{lemma}[theorem]{Lemma}
\theoremstyle{remark}
\theoremstyle{remark}
   \newtheorem*{remark*}{Remark}
   \newtheorem*{remarks*}{Remarks}
\theoremstyle{definition}
\begin{document}

\title[The wave equation on groups of
 Heisenberg type]{
Sharp $\text{L}^{\text{p}}$-bounds for
the
wave equation on groups of
 Heisenberg type}

\author{Detlef M\"uller and Andreas Seeger}

\address{
D. M\"uller\\
Mathematisches Seminar\\
  C.A.-Universit\"at Kiel\\
Ludewig-Meyn-Str.4\\ D-24098 Kiel\\ Germany}
\email{mueller@math.uni-kiel.de}

\address{A. Seeger   \\
Department of Mathematics\\ University of Wisconsin-Madison\\Madison, WI 53706, USA}
\email{seeger@math.wisc.edu}

\subjclass{42B15, 43A80, 35L05, 35S30}

\begin{thanks} {Research partially supported
by NSF grant DMS 1200261.}
\end{thanks}

\date{August 13, 2014}

\begin{abstract} Consider  the wave equation associated with  the
Kohn Laplacian on groups of Heisenberg type. We construct parametrices using
oscillatory integral representations and use them to prove
sharp  $L^p$ and Hardy space regularity results.
\end{abstract}

\maketitle


\section*{Introduction}\label{introduction}

Given a second order differential operator $L$ on a suitable manifold we
consider the Cauchy  problem  for the associated wave equation
\Be\label{cauchy}
\big(\partial_\tau^2+L\big)u=0, \quad u\big|_{\tau=0} =f, \quad \partial_\tau
u\big|_{\tau=0}=g.
\Ee
This paper is a contribution to the problem of $L^p$
bounds of the solutions at fixed time $\tau$,
 in terms of $L^p$ Sobolev norms of the initial data $f$ and $g$.
This problem is well understood if $L$ is the standard Laplacian
$-\Delta$ (i.e. defined as a positive operator) in
$\Bbb R^d$ (Miyachi \cite{miyachi}, Peral \cite{peral}),
or the Laplace-Beltrami operator  on a compact manifold
(\cite{SSS}) of
dimension $d$.
In this case \eqref{cauchy} is a strictly hyperbolic problem and
reduces to estimates for Fourier integral operators associates to a
local canonical graph. The known sharp  regularity results  in this case
say that
if $\gamma(p)=(d-1)|1/p-1/2|$ and the initial data $f$ and $g$
belong to the $L^p$-Sobolev spaces  $L^p_{\gamma(p)}$ and
$L^p_{\gamma(p)-1}$, resp., then the solution $u(\cdot,\tau)$ at fixed
time $\tau$ (say $\tau=\pm 1$)  belongs to $L^p$.

In the absence of strict hyperbolicity, the classical Fourier
integral operator techniques do not seem available anymore and it is
not even clear how to efficiently construct parametrices for the
solutions;
consequently the $L^p$ regularity problem is largely open.
However some considerable progress has been made for the specific case
of an invariant operator on the
Heisenberg group $\bbH_m$ which is often considered as a model case
for more general situations.
Recall that
coordinates on $\Bbb H_m$ are given by $(z,u)$, with $z=x+iy\in
\bbC^m$, $u\in \bbR$, and the group law
is given by  $(z,u)\cdot (z',u')= (z+z', u+u'-\frac 12\Im (z\cdot
\overline{z'})$. A basis of left invariant vector
fields is
given by
$X_j=\frac{\partial}{\partial   x_j}-
\frac{y_j}{2}\frac{\partial}{\partial u}$, $Y_j=\frac{\partial}{\partial   y_j}+
\frac{x_j}{2}\frac{\partial}{\partial u},$ and we consider the Kohn Laplacian
$$L=-\sum_{j=1}^m (X_j^2+Y_j^2).$$
This operator is perhaps the simplest example of a nonelliptic  sum of squares
operator
in the sense of H\"ormander \cite{hoermander-hypo}.
In view of the
Heisenberg group structure
it is natural to analyze the corresponding wave group using tools
 from noncommutative Fourier analysis. The operator $L$
is essentially selfadjoint on $C^\infty_0(G)$ (this follows from the methods used in \cite{ne-st})
 and  the solution of
\eqref{cauchy} can be expressed using the spectral theorem in terms of
functional calculus; it
is  given by
$$u(\cdot,\tau)= \cos(\tau\sqrt L)f +  \frac{   \sin (\tau\sqrt L)}{\sqrt L} g.$$
We are then  aiming to prove
 estimates of the form
\Be\label{SobL}
\|u(\cdot,\tau)\|_p \lc
\|(I+\tau^2L)^{\frac \gamma 2} f\|_p +  \|\tau (I+\tau^2L)^{\frac \gamma 2 -1} g\|_p.
\Ee
involving versions of $L^p$-Sobolev spaces defined by the 
subelliptic operator $L$.
Alternatively, one can consider  equivalent
uniform  $L^p\to L^p$ bounds for operators $a(\tau\sqrt L)
e^{\pm i \tau\sqrt{L}}$
where $a$ is a standard (constant coefficient) symbol of order $-\gamma$.
Note that it suffices to prove those bounds  for times
 $\tau=\pm 1$, after a scaling using the
automorphic dilations $(z,u)\mapsto (rz,r^2 u),\ r > 0$.

A first study about the  solutions to \eqref{cauchy} has been
undertaken
by Nachman \cite{nachman} who showed that
 the wave operator on $\HH_m$ has  a fundamental solution
whose
singularities lie on  the cone $\Gamma$ formed by the
characteristics
through the origin. He showed
that the singularity set  $\Gamma$ has a far more complicated structure
 for $\bbH_m$ than
the corresponding cone in
the Euclidean case.
The fundamental solution is given by a series involving Laguerre polynomials and Nachman  was able to examine  the
asymptotic behavior  as one approaches a
generic
singular point on $\Gamma$. However his  method does not seem to yield
uniform estimates in a neighborhood of the singular set which are crucial for obtaining  $L^p$-Sobolev estimates for solutions to
\eqref{cauchy}.

In \cite{MuSt-wave} the first author and E. Stein were able to derive nearly sharp
$L^1$  estimates (and by interpolation also $L^p$ estimates, leaving open the interesting endpoint bounds).
Their approach
 relied on explicit calculations using Gelfand transforms
for the algebra of radial
 $L^1$ functions on the Heisenberg group, and the geometry
of the singular support remained hidden in this approach.
Later, Greiner, Holcman and Kannai \cite{ghk}
 used
contour integrals and an explicit formula
for the heat kernel
on the Heisenberg group to derive
an  integral
formula for the fundamental solution of the wave equation on $\bbH^m$
which exhibits the singularities of the wave kernel.
We shall follow a somewhat different approach,
which allows us to link the geometrical picture to a
decomposition of the joint spectrum of $L$ and the
operator $U$ of differentiation in the central direction
(see also Strichartz \cite{str}); this linkage is crucial to prove optimal $L^p$ regularity estimates.

In order to derive parametrices we will
use a subordination argument based on stationary phase calculations
to write the wave
operator as an integral involving Schr\"odinger operators for which
explicit formulas are available (\cite{gaveau}, \cite{hulanicki}).
This will yield some type of oscillatory integral representation of the
kernels,  as in the theory of Fourier integral operators which will be amenable to proving $L^p$ estimates.
Unlike in the classical theory of Fourier integral operators (\cite{hoermander-fio}) our
phase functions
are not smooth everywhere and have substantial  singularities; this
 leads to considerable complications.
Finally, an important point in our proof is the identification of a
suitable Hardy
space
for the problem, so that $L^p$ bounds can be proved by interpolation
of $L^2$  and Hardy space estimates. We then obtain the
 following  sharp $L^p$ regularity result which is a direct analogue of the result  by Peral \cite{peral} and Miyachi \cite{miyachi} on the wave equation
in the Euclidean setting.

\medskip

\noi{\bf Theorem.}
{\it  Let $d=2m+1$,
$1<p<\infty$, and
$\ga \ge  (d-1)|1/p - 1/2|$.
Then the  operators
 $ (I+\tau^2 L)^{-\ga/2} \exp(\pm i\tau\sqrt L)$
 extend to  bounded operators on $L^p(\bbH^m)$.
The solutions $u$ to the initial value problem \eqref{cauchy} satisfy the Sobolev type inequalities
\eqref{SobL}.}

\medskip

Throughout the paper we shall in fact consider the  more general situation
of
{\it groups of  Heisenberg type}, introduced by  Kaplan \cite{kaplan}.
These  include groups with  center
of dimension $>1$.
The extension of the above result for the wave operator to  groups of
Heisenberg type   and further results will be formulated in the next section.

\section{The results for  groups of Heisenberg type}

\subsection{\it Groups of Heisenberg type}\label{groupsofheis}
Let $d_1$, $d_2$ be positive integers, with $d_1$ {\it even}, and consider a Lie algebra $\fg$
of Heisenberg type, where
$\g=\g_1\oplus\g_2,$ with $\dim\g_1=d_1$ and  $\dim\g_2=d_2,$
 and
 $$
[\g,\g]\subset \g_2\subset \z(\g)\ ,
$$
$\z(\g)$ being the center of $\g$. Now  $\g$ is
endowed with an inner product
$\lan \ ,\ \ran$ such that $\g_1$ and $\g_2$ or orthogonal subspaces and,
and if we define for  $\mu\in\g_2^*\setminus\{0\}$ the symplectic  form $\om_\mu$ on $\g_1$  by
\begin{equation} \label{omegamu}
\om_\mu(V,W): =\mu\big([V,W]\big)\ ,
\end{equation}
then there is a unique  skew-symmetric linear endomorphism  $J_\mu$ of $\g_1$ such that
\Be\label{repofomegabyJ}
\om_\mu(V,W)=
\lan J_\mu(V),W\ran
\Ee
(here, we also used the natural identification of $\g_2^*$ with $\g_2$ via the inner product). Then on a Lie algebra of Heisenberg type
\begin{equation}\label{Jmusquared}
J_\mu^2=-|\mu|^2 I
\end{equation}
for every 
$\mu\in\g_2^*$.
As the  corresponding connected, simply connected Lie group $G$ 
we then choose the linear manifold $\g,$ endowed with the Baker-Campbell-Hausdorff product
$$(V_1,U_1)\cdot (V_2,U_2):=(V_1+V_2,U_1+U_2+\frac 12 [V_1,V_2]).
$$
As usual, we identify $X\in\g$ with the corresponding left-invariant vector field on $G$ given by the Lie-derivative
$$
X f(g):=\frac d{dt}   f(g\exp(tX))|_{t=0},
$$
where $\exp:\g\to G$ denotes the exponential mapping, which agrees with the identity mapping in our case.

Let us next fix an orthonormal basis $X_1,\dots,X_{d_1}$ of $\g_1,$ as well as an orthonormal basis $U_1,\dots,U_{d_2}$ of $\g_2.$ We may then
 identify $\g=\g_1+\g_2$ and $G$ with $\RR^{d_1}\times\RR^{d_2}$
 by means of the basis $X_1,\dots,X_{d_1}$, $U_1,\dots,U_{d_2}$ of $\g.$
Then our inner product on $\g$ will agree with the canonical
Euclidean product $v\cdot w=\sum_{j=1}^{d_1+d_2}v_jw_j$ on $\RR^{d_1+d_2},$
and $J_\mu$ will be identified with a skew-symmetric $d_1\times d_1$
matrix. We shall also identify the dual spaces of $\g_1$ and $\g_2$ with $\RR^{d_1}$ and $\RR^{d_2},$ respectively, by means of this inner product.
Moreover, the Lebesgue measure $dx\, du$ on $\RR^{d_1+d_2}$ is a
 bi-invariant Haar measure on $G.$
By
\Be\label{dimension}
d:=d_1+d_2
\Ee
we denote the topological dimension of $G.$
The group law on $G$ is  then given by
\begin{equation} \label{grouplaw}
(x,u)\cdot (x',u')= (x+x', u+u' + \frac{1}{2} \inn{\vec Jx}{x'})
\end{equation}
where $\inn{\vec Jx}{x'}$ denotes the vector in $\bbR^{d_2}$ with components
$\inn{J_{U_i}x}{x'}$.

Let
\Be \label {subLaplacian} L:=-\sum_{j=1}^{d_1} X_j^2
\Ee
denote the sub-Laplacian corresponding to the basis $X_1,\dots,X_{d_1}$ of $\g_1.$

\medskip

In the special case $d_2=1$ we may assume that $J_\mu=\mu J, \mu\in \RR,$ where
\begin{equation}\label{cansympl}
J:=\left ( \begin{array}{cc}
  0 & I_{d_1/2}\\
  -I_{d_1/2} & 0
  \end{array}
\right)
\end{equation}
and $ I_{d_1/2}$ is the identity matrix on $\Bbb R^{d_1/2}$.
 In this case $G$  is  the {\it Heisenberg group} $\HH_{d_1/2},$ discussed
 in the introduction.

Finally, some dilation structures and the corresponding metrics
will play an important role in our
proofs; we shall work with both isotropic and nonisotropic dilations.
First, the natural dilations on the Heisenberg type groups are the
automorphic dilations
\begin{equation}\label{automorphic}
 \de_r(x,u):=(rx,r^2u),\quad r>0,
 \end{equation}
 on $G$. We work with the
{\it Koranyi norm}
 $$
 \|(x,u)\|_{\text{Ko}}:=(|x|^4+|4u|^2)^{1/4}
 $$
which is  a homogeneous norm with respect to the dilations $\de_r.$
Moreover, if we denote the corresponding balls by
 \begin{align*}
 Q_r(x,u):=\{(y,v)\in G:\|(y,v)^{-1}\cdot(x,u)\|_{\text{Ko}}<r\},\ (x,u)\in G,\ r>0,
 \end{align*}
 then the volume $|Q_r(x,u)|$ is given by
 $$
 |Q_r(x,u)|=|Q_1(0,0)|\, r^{d_1+2d_2}.
 $$

 Recall that  $d_1+2d_2 = d+d_2$
 is the {\it homogeneous dimension} of $G.$

We will also have to work
with a variant of the \lq
Euclidean'  balls, i.e.  'isotropic balls"  skewed by the
Heisenberg translation, denoted by
$Q_{r,E}(x,u)$.
 \begin{equation}\label{isotropicballs}
 \begin{aligned}Q_{r,E}(x,u)
:&=\{(y,v)\in G:|(y,v)^{-1}(x,u)|_E<r\},\\
&=\big\{ (y,v)\in G:\big| x-y|+| u-v+\frac{1}{2}\inn{\vec
     Jx}{y}|<r\big\};
\end{aligned}
\end{equation}
    here
 $$
  |(x,u)|_E:=|x|+|u|
$$
is  comparable with the standard   Euclidean norm
$(|x|^2+|u|^2)^{1/2}$.   Observe that the
balls $ Q_r(x,u)$ and $Q_{r,E}(x,u)$ are the left-translates by $(x,u)$ of the corresponding balls centered at the origin.

\medskip

\subsection{\it The main results}\label{main}
We consider symbols $a$ of class $S^{-\gamma} $, {\it i.e.}  satisfying the
estimates
\Be \label{symbols}\Big|\frac {d^j}{(ds)^j} a(s)\Big |\le c_j (1+|s|)^{-\gamma-j} \Ee
for all $j=0,1,2,\dots$.
Our main boundedness
result is


\begin{theorem}\label{main-theorem} Let $1<p<\infty$,
$\gamma(p):=(d-1)|1/p-1/2|$
    and
$a\in S^{-\gamma(p)}$. Then for $-\infty<\tau<\infty$,
 the operators
 $a(\tau\sqrt L) e^{ i\tau\sqrt L}$ extend to  bounded operators
on $L^p(G)$.

The solutions $u$ to the initial value problem \eqref{cauchy} satisfy the Sobolev type inequalities
\eqref{SobL}, for $\gamma\ge \gamma(p)$.
\end{theorem}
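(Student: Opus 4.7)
\medskip

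\noindent\textbf{Proof plan.}
\emph{Reductions.} The automorphic dilations $\delta_r$ of \eqref{automorphic} reduce the problem to $\tau=\pm 1$. At $p=2$ the bound is immediate from the spectral theorem, and by duality it suffices to treat $1<p<2$. Analytic interpolation along a family of symbols of varying order then reduces matters to a Hardy space endpoint estimate: for $a\in S^{-(d-1)/2}$, prove that $a(\sqrt L)\,e^{i\sqrt L}$ is bounded from a suitable Hardy space $h^1(G)$ into $L^1(G)$. Identifying the right $h^1$ is itself non-trivial; since the wave equation, unlike the heat or Schr\"odinger flow, should respect a form of finite propagation speed in the isotropic Euclidean sense (consistent with the geometry of the wave cone $\Gamma$ studied by Nachman), the natural choice is a Goldberg-type local Hardy space $h^1$ modelled on the isotropic balls $Q_{r,E}$ of \eqref{isotropicballs} rather than on the Koranyi balls $Q_r$.

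\emph{Parametrix and dyadic decomposition.} To construct a parametrix the plan is to represent $e^{i\sqrt L}$ as a superposition of Schr\"odinger propagators $e^{isL}$ by means of a stationary-phase subordination formula for $\lambda \mapsto e^{i\sqrt\lambda}$. The convolution kernel of $e^{isL}$ on groups of Heisenberg type has an explicit Mehler-type formula due to Gaveau and Hulanicki, and substituting it and performing the resulting $s$-integration by stationary phase yields an oscillatory integral representation for the convolution kernel of $a(\sqrt L)\,e^{i\sqrt L}$ whose critical set reproduces the singular support $\Gamma$. A Littlewood--Paley decomposition $a=\sum_k a_k$ at frequency scale $2^k$ then splits the operator as a sum $\sum_k T_k$, with each kernel $K_k$ essentially concentrated in an $O(2^{-k})$ neighbourhood of $\Gamma$ and amenable to stationary-phase type estimates.

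\emph{Atomic estimates and main obstacle.} For an $h^1$ atom $\mathfrak{a}$ supported in an isotropic ball $Q_{r,E}$, the goal is to prove estimates on $\|T_k \mathfrak{a}\|_1$ summable in $k$, by splitting into the small-scale regime $2^k r\le 1$ (handled via the mean-zero property of $\mathfrak{a}$ together with a Taylor expansion of $K_k$ across the support) and the large-scale regime $2^k r\ge 1$ (handled by bounding the mass of $K_k$ in a suitable neighbourhood of $\Gamma$ and exploiting the finite-propagation geometry). The principal difficulty will be the kernel analysis precisely on the singular strata of $\Gamma$, where the phase function of the oscillatory integral representation is not smooth and classical Fourier integral operator techniques do not apply. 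To handle those regions I expect to exploit the joint spectral decomposition of $L$ and the central derivatives $U_j$, localising to dyadic ranges in the ratio of central frequency to $\sqrt L$ on which the subordination integral admits a well-defined normal form; tailored stationary phase then produces uniform kernel bounds on each piece, and the pieces are reassembled via the adapted atomic decomposition.
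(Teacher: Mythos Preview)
Your proposal is correct and follows essentially the same route as the paper: reduction to $\tau=1$ by automorphic scaling, duality to $1<p<2$, the endpoint estimate $h^1_{\iso}\to L^1$ for symbols in $S^{-(d-1)/2}$ (with $h^1_{\iso}$ the local Hardy space built on the isotropic balls $Q_{r,E}$), the subordination of $e^{i\sqrt L}$ to the Schr\"odinger group $e^{isL}$ combined with the explicit Gaveau--Hulanicki Mehler kernel, and a Littlewood--Paley decomposition together with a further localisation in the joint spectrum of $L$ and $|U|$ to isolate the phase singularities.

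Two refinements in the paper are worth flagging against your sketch. First, the decisive decomposition in the central spectral variable is \emph{equally spaced} rather than dyadic: one cuts with $\eta_0(\tau|U|/\lambda-k\pi)$, $k\in\mathbb Z$, because the phase $\cot(2\pi t|\mu|)$ in the Mehler formula has periodic singularities, and each piece $T^k_\lambda$ is then further dyadically localised (index $l$) in distance to the nearest singularity; the piece $T^0_\lambda$ is handled by classical FIO theory, while for $k\ge 1$ one proves $\|T^k_\lambda\|_{L^1\to L^1}\lesssim k^{-(d_1+1)/2}\lambda^{(d-1)/2}$, summable in $k$. Second, the analytic-interpolation step is not automatic: since $h^1_{\iso}(G)$ is a nonstandard Hardy space (isotropic balls, Heisenberg translations), one must separately prove $[h^1_{\iso}(G),L^2(G)]_\vartheta=L^p(G)$, which the paper does by a retract/coretract argument reducing to the Euclidean local Hardy space.
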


Our proof also gives sharp $L^1$ estimates
for
operators with  symbols supported  in  dyadic intervals.

\begin{theorem}\label{main-theoremL1} Let $\chi\in C^\infty_c$ supported in $(1/2, 2)$ and let $\lambda\ge 1$.
Then the  operators
 $\chi(\la^{-1}\tau\sqrt L) e^{\pm i\tau\sqrt L}$ extend to  bounded operators
on $L^1(G)$, with operator norms $O(\la^{\frac{d-1}{2}})$.
\end{theorem}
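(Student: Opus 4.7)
\medskip
\noindent\textbf{Proof proposal.}

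The plan is to reduce by scaling to $\tau = 1$, then derive an oscillatory integral representation of the convolution kernel of $\chi(\lambda^{-1}\sqrt L)e^{\pm i\sqrt L}$ by subordination to the explicitly known Schr\"odinger kernel on $G$, and finally bound the kernel pointwise by stationary phase and integrate. In more detail: the operator $\chi(\lambda^{-1}\tau\sqrt L)e^{\pm i\tau\sqrt L}$ is a spectral multiplier in $L$, hence a right convolution operator with some kernel $K_\lambda^\tau$, and since $G$ is unimodular its $L^1\to L^1$ norm equals $\|K_\lambda^\tau\|_{L^1(G)}$. The automorphic dilations \eqref{automorphic} satisfy $L\circ \delta_r^\ast = r^{-2}\delta_r^\ast\circ L$ and rescale Haar measure by a constant factor; conjugation by the corresponding $L^1$-isometry therefore reduces matters to $\tau=1$, so that it suffices to prove $\|K_\lambda\|_{L^1(G)}\lesssim \lambda^{(d-1)/2}$ for $\lambda\ge 1$, where $K_\lambda$ denotes the kernel of $\chi(\lambda^{-1}\sqrt L)e^{\pm i\sqrt L}$.

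\emph{Oscillatory integral representation.} Setting $m_\lambda(s) := \chi(\lambda^{-1}s)e^{\pm is}$, I would start from the Fourier inversion $m_\lambda(\sqrt L) = \tfrac{1}{\pi}\int_{\bbR}\widehat{m_\lambda}(2s)\cos(2s\sqrt L)\,ds$ and insert a subordination identity of the form $\cos(2s\sqrt L) = \frac{s}{\sqrt\pi}\,\mathrm{Re}\!\int_0^\infty t^{-3/2}e^{is^2/t}e^{-itL}\,dt$, obtained by analytic continuation of the classical subordination for $e^{-r\sqrt L}$, so as to write $m_\lambda(\sqrt L)$ as a superposition of Schr\"odinger operators. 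The kernel $k_t$ of $e^{-itL}$ on a group of Heisenberg type is the Gaveau--Hulanicki kernel; its partial Fourier transform in the central variable $u\in \g_2$ equals
\[
\widetilde k_t(x,\mu) = (2\pi)^{-d_1/2}\Big(\tfrac{|\mu|}{\sin(t|\mu|)}\Big)^{d_1/2}\exp\!\big(-\tfrac{i}{4}|\mu|\cot(t|\mu|)\,|x|^2\big),\qquad \mu\in\g_2^*.
\]
Undoing the Fourier transform in $u$ and collecting the three layers produces an expression
\[
K_\lambda(x,u) = \iiint e^{i\Phi(x,u,\mu,s,t)}\,a_\lambda(x,\mu,s,t)\,d\mu\,ds\,dt
\]
with piecewise smooth phase $\Phi$ and amplitude $a_\lambda$ essentially localized where $s\sim 1$ and $t|\mu|$ is bounded, inheriting the singularity $(\sin(t|\mu|))^{-d_1/2}$ from the Gaveau kernel.

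\emph{Stationary phase and integration.} I would then analyze this integral by stationary phase in the $d_2+2$ variables $(\mu,s,t)\in \g_2^*\times\bbR\times\bbR_+$, equivalently via the joint spectral decomposition of $L$ and the central derivatives $U_1,\dots,U_{d_2}$ in the spirit of \cite{str}. The critical set projects onto the sub-Riemannian characteristic cone $\Gamma\subset G$ identified by Nachman \cite{nachman}. Off a controlled neighborhood of $\Gamma$, repeated non-stationary integration by parts in $(\mu,s,t)$ should give $|K_\lambda(x,u)| = O(\lambda^{-N})$ for every $N$, contributing negligibly to the $L^1$ norm. On a neighborhood $U_\lambda$ of $\Gamma$ of bounded Haar measure, the non-degenerate stationary phase contributes a factor $\lambda^{-(d_2+2)/2}$, and combined with the size of $a_\lambda$ this yields the pointwise bound $|K_\lambda(x,u)|\lesssim \lambda^{(d-1)/2}$ on $U_\lambda$; integration then gives the theorem.

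\emph{Main obstacle.} The hardest part is controlling the singularities of $\Phi$ and $a_\lambda$ on the set $\{\sin(t|\mu|)=0\}$, which correspond precisely to the caustic strata of Nachman's cone $\Gamma$ and obstruct direct application of the classical Fourier integral operator calculus of \cite{hoermander-fio}. The plan is to decompose $(\mu,s,t)$ dyadically in the distance of $t|\mu|$ to $\pi\bbZ$, prove uniform stationary phase bounds on each piece with sharp tracking of the degeneracies, and sum without loss. In joint spectral language this amounts to treating the regime of large central frequency $|\mu|\sim\lambda$ (where the harmonic oscillator spectrum $(2k+d_1/2)|\mu|$ of $L$ at frequency $\mu$ is sparse) separately from the regime $|\mu|\ll\lambda$ (where many eigenvalues accumulate near $\lambda^2$), and controlling the constructive interference along $\Gamma$ in both regimes.
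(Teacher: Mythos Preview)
Your overall strategy---scaling to $\tau=1$, subordination to the Schr\"odinger group, the explicit Gaveau--Hulanicki kernel, and a dyadic decomposition near the singular set $\{\sin(t|\mu|)=0\}$---matches the paper's approach. Two points, however, need correction or sharpening.

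First, your ``stationary phase and integration'' paragraph claims a uniform pointwise bound $|K_\lambda(x,u)|\lesssim\lambda^{(d-1)/2}$ on a neighborhood $U_\lambda$ of $\Gamma$ of bounded measure. This is false: near the $k$-th branch of $\Gamma$ and at distance $\sim 2^{-l}$ from the $\cot$-singularity, the kernel satisfies $\|\lambda^{-(d-1)/2}K_\lambda^{k,l}\|_\infty\lesssim \lambda^{1+d_2/2}k^{d_2-1}(2^lk)^{d_1/2}\cdot 2^{-l}$, which is enormously larger. The $L^1$ bound does not come from a pointwise estimate on a fixed set; it comes from integrating over a family of shrinking neighborhoods $\Omega_m$ of the singular curve $(r(t),v(t))$ (with $|r-r(b)|\lesssim 2^m(\lambda k2^l)^{-1/2}$ and a corresponding constraint on $u$), and from proving $\int_{\Omega_m}|K_\lambda^{k,l}|\lesssim 2^{-mN}$ via integration by parts rather than stationary phase. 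This is the content of the paper's Proposition~\ref{vert-hor} and is the technical heart of the argument.

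Second, the paper's decomposition has a layer you do not make explicit: before the dyadic $l$-decomposition in the distance of $t|\mu|$ to $\pi\bbZ$, one decomposes according to \emph{which} integer $k$ one is near, writing $\chi_1(\lambda^{-2}L)m_\lambda(L)=\sum_{k\ge 0}\chi_1(\lambda^{-2}L)T_\lambda^k$. The piece $T_\lambda^0$ has a non-singular phase and is handled by the classical FIO calculus of \cite{SSS} (the canonical relation is a local graph). For $k\ge 1$ one proves the summable estimate $\|T_\lambda^k\|_{L^1\to L^1}\lesssim k^{-(d_1+1)/2}\lambda^{(d-1)/2}$; the decay exponent $(d_1+1)/2>1$ (since $d_1\ge 2$) is exactly what allows the sum over $k$ to converge. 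Your outline anticipates a decomposition near the singularities but does not isolate this $k$-structure or the crucial summability exponent, and without it the ``sum without loss'' you hope for has no mechanism. (The paper's subordination, Lemma~\ref{subordlemma}, is also a bit more direct than yours: it writes $g(\lambda^{-1}\sqrt L)e^{i\sqrt L}$ as $\sqrt\lambda\int e^{i\lambda/4\tau}a_\lambda(\tau)e^{i\tau L/\lambda}\,d\tau$ plus a negligible remainder, avoiding the intermediate $\cos(2s\sqrt L)$ step and reducing the oscillatory integral to $d_2+1$ rather than $d_2+2$ frequency variables.)
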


In view of the invariance under automorphic dilations it suffices to
prove these results for  $\tau=\pm 1$, and by symmetry considerations, we only need to consider $\tau=1$.

An interesting question posed in \cite{MuSt-wave} concerns the
validity of an appropriate result in
the limiting case $p=1$ (such as a
Hardy  space bound). Here the situation is more complicated than
in the Euclidean case because of the interplay of isotropic and
nonisotropic dilations. The usual Hardy spaces  $H^1(G)$ are defined using
the nonisotropic
automorphic dilations
\eqref{automorphic}
together with the Koranyi balls. This geo\-metry is
not appropriate for our problem; instead the estimates for our kernels
require a Hardy space
that is
defined using isotropic dilations (just as in the Euclidean case) and
yet is compatible with the Heisenberg group structure.
On the other hand we shall use a dyadic decomposition of the spectrum
of $L$  which corresponds to a Littlewood-Paley decomposition
using nonisotropic dilations.

 This space $h^1_\iso(G)$ is a variant
of the isotropic  local or
(nonhomogeneous) Hardy space in the Euclidean setting.
To define it we first introduce the appropriate notion of atoms.
For  $0<r\le 1$   we define  a $(P,r)$ atom as a function
$b$ supported in the isotropic Heisenberg  ball
$Q_{r,E}(P)$ with radius $r$ centered at $P$
(\cf. \eqref{isotropicballs}), such that
$\|b\|_2\le r^{-d/2},$ and $\int b =0$ if $r\le 1/2$.
A function $f$  belongs to $h^1_\iso(G)$  if
$f=\sum c_\nu b_\nu$ where $b_{\nu}$ is a $(P_\nu, r^\nu)$ atom for
some point $P_\nu$ and some radius $r_\nu\le 1$, and   the sequence
$\{c_\nu\}$ is absolutely convergent. The norm on $h^1_\iso(G)$ is given by
$$\inf \sum_\nu|c_\nu|$$
where the infimum is taken over representations  of $f$ as a sum
$f=\sum_\nu c_\nu b_\nu$ where the $b_\nu$ are  atoms.
It is easy to see that $h^1_\iso(G)$ is a closed subspace of $L^1(G)$.
The spaces 
$L^p(G)$, $1<p<2$,
are  complex interpolation spaces for the couple 
$(h^1_\iso(G), L^2(G))$ (see  \S \ref{interpolation}) and by an analytic  interpolation argument 
Theorem \ref{main-theorem} can be deduced from  an $L^2$ estimate and the
following $h^1_\iso\to L^1$ result.

\begin{theorem} \label{h1thm}
Let $a\in S^{-\frac{d-1}{2}}$. Then the operators
$a(\sqrt L) e^{\pm i \sqrt L}$ map the isotropic Hardy space
$h^1_\iso(G)$
boundedly  to $L^1(G)$.
\end{theorem}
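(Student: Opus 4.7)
The plan is the standard atomic reduction for Hardy space bounds. Since $h^1_\iso(G) \hookrightarrow L^1(G)$ is defined atomically, it suffices to prove a uniform bound $\|Tb\|_{L^1(G)} \le C$ for every $(P,r)$-atom $b$ with $0 < r \le 1$, where $T = a(\sqrt L) e^{i\sqrt L}$. Left-invariance of $L$, and hence of $T$, lets us take $P = 0$, so that $\supp b \subseteq Q_{r,E}(0)$, $\|b\|_2 \le r^{-d/2}$, and $\int b = 0$ when $r \le 1/2$.

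Pick $\chi \in C_c^\infty(1/2,2)$ with $\chi_0(s) + \sum_{k\ge 1}\chi(2^{-k}s) = 1$ on $[0,\infty)$ and decompose $T = T_0 + \sum_{k \ge 1} T_k$ with $T_k = a(\sqrt L)\chi(2^{-k}\sqrt L) e^{i\sqrt L}$. The low-frequency piece $T_0$ is harmless: its convolution kernel is Schwartz at unit scale, so $\|T_0 b\|_1 \lesssim \|b\|_1 \lesssim |Q_{r,E}(0)|^{1/2}\|b\|_2 \lesssim 1$. For $k \ge 1$, the symbol factor $2^{-k(d-1)/2}$ cancels the operator norm $O(2^{k(d-1)/2})$ from Theorem~\ref{main-theoremL1}, so only $\|T_k\|_{L^1\to L^1} = O(1)$ follows without further input; this has to be upgraded to a summable factor. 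I will split into two regimes. When $2^k r \le 1$, use the cancellation $\int b = 0$: denoting by $K_k$ the convolution kernel of $T_k$ and writing
\[
T_k b(x) = \int \big[K_k(y^{-1}x) - K_k(x)\big] b(y)\, dy,
\]
a mean-value estimate in the isotropic metric, controlling $K_k(y^{-1}x) - K_k(x)$ by the Euclidean gradient of $K_k$ times $r$, should produce the gain $O(2^k r)$, summable over $2^k \le r^{-1}$. When $2^k r > 1$, use instead the $L^2$-bound $\|T_k b\|_2 \lesssim 2^{-k(d-1)/2} r^{-d/2}$, combined with the fact (to be drawn from the parametrix) that $T_k b$ is concentrated, up to rapidly decaying tails, in a wave zone around the unit Koranyi sphere of volume $O(1)$, refinable further into tubes of total measure $o(1)$; Cauchy--Schwarz then yields $\|T_k b\|_1 \lesssim (2^k r)^{-\epsilon}$, likewise summable.

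The principal obstacle lies in the high-frequency regime, namely sharp control of the support/concentration of $T_k b$ in the wave zone. As Nachman observed, the singular support of the fundamental solution on $\bbH_m$ is stratified in a much more intricate way than the Euclidean cone, and on a general $H$-type group this is more delicate still, being governed by the joint spectrum of $L$ and the central derivatives $U_1,\dots,U_{d_2}$. The required tube decomposition and volume estimates should come from the oscillatory integral parametrix for $e^{i\sqrt L}$ built from the Schr\"odinger group $e^{-isL}$ via the Gaveau--Hulanicki formula and a subordination/stationary-phase argument, together with a further dyadic decomposition in the central frequency $|\mu|$ and its direction on the unit sphere of $\fg_2^*$. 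Because the associated phase functions are only piecewise smooth and have substantial singularities along the strata of the cone, the heart of the argument will lie in dissecting $T_k$ stratum by stratum and bounding each contribution in $L^1$ after Cauchy--Schwarz on its wave tube.
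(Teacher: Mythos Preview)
Your overall architecture matches the paper's: atomic reduction, Littlewood--Paley decomposition in $\sqrt L$, cancellation for the low-frequency pieces $2^k r\lesssim 1$, and an $L^2$/support argument for the high-frequency pieces. Two steps, however, are genuine gaps as written.

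\textbf{The cancellation step.} Your mean-value estimate asks for $\|\nabla_E K_k\|_{L^1}=O(2^k)$, where $\nabla_E$ is the Euclidean gradient. This is not available: on the part of the joint spectrum of $(L,|U|)$ where $|U|\approx L$ (small Laguerre index), the $u$-derivative of $K_k$ scales like $2^{2k}$, not $2^k$, and the resulting bound $2^{2k}r$ does not sum over $2^k\le r^{-1}$. The paper avoids differentiating the wave kernel entirely. It introduces the additional dyadic decomposition in $|U|$ (the parameter $n$, with $|U|\approx 2^{j+n}$), factors $W_{j,n}f_\rho$ through the smooth projection kernel $H_{j,n}$ of $\chi_1(2^{-2j}L)\zeta_1(2^{-j-n}|U|)$, and applies the cancellation there: $\|f_\rho*H_{j,n}\|_1\lesssim 2^{j+n}\rho$. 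The extra factor $2^n$ is then absorbed by the decay $\|T^{k}_{2^j}\|_{L^1\to L^1}\lesssim k^{-(d_1+1)/2}\approx 2^{-n(d_1+1)/2}$, and the double sum over $j$ (with $2^{j}\rho\le 2^{-10n}$) and $n$ converges.

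\textbf{The high-frequency/wave-zone step.} Your claim that $T_kb$ is concentrated in tubes of ``total measure $o(1)$'' is not sharp enough, and more importantly the off-tube estimate is the hard part. After the $n$-decomposition, the paper defines an exceptional set $\cN_{n,\rho}$ as a $\sqrt{C_1\rho}$-neighborhood (in the $(|x|,4|u|)$ variables) of the relevant arc of the singular-support curve $(r(t),v(t))$; its measure is $O(\rho)$, so Cauchy--Schwarz with the $L^2$ bound $\|\sum_{j\ge j_0}W_{j,n}\|_{L^2\to L^2}\lesssim 2^{-j_0(d-1)/2}$ handles $L^1(\cN_{n,\rho})$. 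Off $\cN_{n,\rho}$ one needs the refined pointwise decay of the pieces $A^{k,l}_{2^j,b}$ on the sets $\Omega_m$ (Proposition~\ref{vert-hor}): this is the technical heart of the paper and is where the phase singularities bite. There is no shortcut here. Finally, the piece $T^0_{2^j}$ (nonsingular part of the cone) has a different structure and is handled separately as a classical FIO associated to a canonical graph, via the results of \cite{SSS}; your proposal does not isolate this piece.
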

The norm in the  Hardy space
$h^1_\iso(G)$ is not invariant under the automorphic dilations
\eqref{automorphic}.
It is not currently known whether there is
a suitable  Hardy space result which can be used for interpolation and
works
 for all
$a(\tau\sqrt L) e^{ i\tau \sqrt L}$ with bounds uniform in $\tau$.

\medskip

\subsection{\it Spectral multipliers}\label{specmultipliers}
If $m$ is a bounded spectral multiplier, then clearly the operator $m(L)$ is bounded on $L^2(G).$  An important question is then under which additional conditions on the spectral
multiplier $m$ the operator $m(L)$ extends from $L^2\cap L^p(M)$ to an $L^p$-bounded
operator, for a given $p\ne 2$.

Fix a non-trivial cut-off function $\chi\in C^{\infty}_0(\RR)$ supported in
 the interval $[1,2]$; it is convenient  to assume that
$\sum_{k\in \bbZ}\chi(2^ks)=1$ for all  $s>0$.
Let $L^2_{\alpha}(\bbR)$ denote the classical  Sobolev-space of order $\alpha$.
Hulanicki and Stein  (see Theorem 6.25 in \cite{FollSt}),
proved  analogs of the classical Mikhlin-H\"ormander multiplier theorem
on stratified groups, namely
the inequality
\Be\label{locsob}
\|m(L)\|_{L^p\to L^p} \le C_{p,\alpha} \sup_{t>0}\|\chi m(t\cdot)\|_{L^2_\alpha},
\Ee for sufficiently large $\alpha$.
By the work of
M.~Christ \cite{christ}, and also Mauceri-Meda \cite{mauceri-meda},
the inequality \eqref{locsob} holds true for $\alpha> (d+d_2)/2$,  in fact
they established a more general result  for all stratified groups.
Observe that, in comparison to the classical case $G=\RR^d$,
 the homogeneous dimension $d+d_2$ takes over the role of the Euclidean
dimension $d$. However,  for the special case of the Heisenberg groups
it was shown by
E.M.~Stein  and the first author \cite{MuSt-mult} that
\eqref{locsob} holds for the larger range $\alpha>d/2$.
 This result, as well as an extension to Heisenberg type groups has
been proved independently by Hebisch \cite{hebisch}, and Martini \cite{martini}
showed that Hebisch's argument can be used to prove a similar result on
 M\'etivier groups. Here we use our estimate on the wave equation to prove,
only  for Heisenberg type groups, a result that  covers a larger
class of multipliers:

\begin{theorem}\label{multipliers}
Let $G$ be a group of Heisenberg type, with topological dimension $d$.
Let  $m\in L^\infty(\bbR)$,  let  $\chi\in C_0^\infty$ be as above, let
 $$
\fA_{R}:=\sup_{t>0} \int_{|s|\ge R}\big|\cF^{-1}_\bbR[\chi m(t\cdot)](s)\big|s^{\frac{d-1}{2}} ds
$$  and assume
\Be \label{Ftcondition}
\|m\|_\infty +  \int_{2}^\infty  \fA_{R} \frac{dR}{R} <\infty.
\Ee
Then the operator $m(\sqrt L)$
is of weak type $(1,1)$ and
  bounded on  $L^p(G)$,
$1<p<\infty$.
\end{theorem}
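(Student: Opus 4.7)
The plan is to derive Theorem~\ref{multipliers} from the sharp wave-equation bound of Theorem~\ref{main-theoremL1} by verifying a H\"ormander integral condition for the convolution kernel of $m(\sqrt L)$ with respect to the Koranyi balls, and then invoking the Calder\'on--Zygmund machinery on the space of homogeneous type $(G,\|\cdot\|_{\text{Ko}},dx)$. Fix the cutoff $\chi\in C_0^\infty([1,2])$ from the statement and decompose $m=m_0+\sum_{k\ge 1}m_k$ with $m_k(s):=\chi(2^{-k}s)m(s)$; the low-frequency piece $m_0(\sqrt L)$ is easily controlled by $\|m\|_\infty$ and standard heat-semigroup arguments. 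For $k\ge 1$ the rescaled function $\eta_k:=\chi\cdot m(2^k\cdot)$ appearing in the hypothesis (with the choice $t=2^k$) satisfies $m_k(s)=\eta_k(2^{-k}s)$. Choose $\widetilde\chi\in C_0^\infty$ with $\widetilde\chi\equiv 1$ on $\supp\chi$, so that $m_k(\sqrt L)=\widetilde\chi(2^{-k}\sqrt L)m_k(\sqrt L)$. Fourier inversion then yields the wave representation
\[
m_k(\sqrt L)=\frac{1}{2\pi}\int_{\bbR}\widehat{\eta_k}(s)\;\widetilde\chi(2^{-k}\sqrt L)\,e^{is\cdot 2^{-k}\sqrt L}\,ds.
\]

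Next, I extract kernel bounds from the wave-equation estimate. Theorem~\ref{main-theoremL1} applied with $\tau=s/2^k$ and $\lambda=\max(|s|,1)$ gives
\[
\bigl\|\widetilde\chi(2^{-k}\sqrt L)\,e^{is\cdot 2^{-k}\sqrt L}\bigr\|_{L^1\to L^1}\lesssim(1+|s|)^{(d-1)/2}.
\]
Combined with finite propagation speed for the wave group and the rapid off-diagonal decay of $\widetilde\chi(2^{-k}\sqrt L)$ at spatial scale $2^{-k}$, the kernel of $\widetilde\chi(2^{-k}\sqrt L)e^{is\cdot 2^{-k}\sqrt L}$ is (essentially) supported in the Koranyi ball of radius $\lesssim 2^{-k}(1+|s|)$. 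For the convolution kernel $K_k$ of $m_k(\sqrt L)$ this yields, for every $R\ge 2$,
\[
\int_{\|y\|_{\text{Ko}}>R\cdot 2^{-k}}|K_k(y)|\,dy\;\lesssim\;\int_{|s|\gtrsim R}|\widehat{\eta_k}(s)|\,|s|^{(d-1)/2}\,ds\;\le\;\fA_R.
\]

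To verify the H\"ormander integral condition
\[
\sup_{y_0\in G}\int_{\|y\|_{\text{Ko}}>2\|y_0\|_{\text{Ko}}}|K(y\cdot y_0^{-1})-K(y)|\,dy<\infty
\]
needed for Calder\'on--Zygmund theory, I split the contribution of each $K_k$ into two regimes. If $\|y_0\|_{\text{Ko}}\ge 2^{-k}$ the cancellation is discarded and the preceding display is applied to both terms with $R\sim 2^k\|y_0\|_{\text{Ko}}$, contributing $\lesssim\fA_{2^k\|y_0\|_{\text{Ko}}}$. If $\|y_0\|_{\text{Ko}}<2^{-k}$, one exploits the smoothness of $K_k$ at scale $2^{-k}$ obtained by differentiating the above Fourier representation on $G$ and using the mean-value inequality; the gain $2^k\|y_0\|_{\text{Ko}}$ compensates the extra polynomial factor $|s|$ introduced by such differentiation, after again invoking Theorem~\ref{main-theoremL1}. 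Summing the two contributions over $k$ telescopes to a bound of the form $\int_2^\infty\fA_R\,dR/R$, finite by hypothesis.

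Finally, $\|m\|_\infty<\infty$ implies $L^2$-boundedness of $m(\sqrt L)$ by the spectral theorem. The H\"ormander condition then produces the weak-type $(1,1)$ bound via the Calder\'on--Zygmund machinery on $(G,\|\cdot\|_{\text{Ko}},dx)$; Marcinkiewicz interpolation gives $L^p$ boundedness for $1<p<2$, and duality applied to $\overline{m}$ (which satisfies the same hypothesis) handles $2<p<\infty$. The principal obstacle is the smoothness step in the H\"ormander condition: one must express the derivative of $K_k$ within the same wave-operator framework so that the sharp exponent $(d-1)/2$ is preserved, and then calibrate the large- and small-translation regimes so that their sum reproduces exactly the integrated hypothesis $\int_2^\infty\fA_R\,dR/R$ without loss.
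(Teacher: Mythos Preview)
Your overall strategy matches the paper's: dyadic decomposition $m=\sum m_k$, the wave representation $m_k(\sqrt L)=\int\widehat{\eta_k}(s)\,\widetilde\chi(2^{-k}\sqrt L)e^{is2^{-k}\sqrt L}\,ds$, the $L^1$ bound from Theorem~\ref{main-theoremL1}, finite propagation (Proposition~\ref{junkaway}) for the regime $2^k\rho\gtrsim 1$, and Calder\'on--Zygmund theory. Your large-$k$ contribution $\sum_{2^k\rho\ge 1}\fA_{2^k\rho}\lesssim\int_2^\infty\fA_R\,dR/R$ is exactly what the paper obtains.

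The gap is in your smoothness step for $2^k\|y_0\|_{\text{Ko}}<1$. Spatial differentiation of the kernel of $\widetilde\chi(2^{-k}\sqrt L)e^{is2^{-k}\sqrt L}$ does not introduce a factor of $|s|$: Theorem~\ref{main-theoremL1} says nothing about the $L^1$ norm of the gradient of the wave kernel, and one cannot recover such a bound by writing $X_j=X_jL^{-1/2}\cdot L^{1/2}$ since the Riesz transforms are not $L^1$-bounded. Even if an $|s|$ factor did appear, it lives inside the $s$-integral while $2^k\|y_0\|_{\text{Ko}}$ sits outside, so they cannot compensate; one would need control of $\int|\widehat{\eta_k}(s)||s|^{(d+1)/2}\,ds$, which is strictly stronger than the hypothesis on $\fA_R$.

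The paper sidesteps this by working with atoms $f_\rho$ directly rather than verifying the pointwise H\"ormander kernel condition. One factors $m_k(\sqrt L)f_\rho=m_k(\sqrt L)\bigl[\chi_1(2^{-k}\sqrt L)f_\rho\bigr]$, using that $\chi_1\equiv 1$ on $\supp h_k$. By Hulanicki's theorem the convolution kernel of $\chi_1(2^{-k}\sqrt L)$ is Schwartz at Heisenberg scale $2^{-k}$, so the cancellation of $f_\rho$ and a straightforward mean-value estimate give $\|\chi_1(2^{-k}\sqrt L)f_\rho\|_1\lesssim(2^k\rho)^\eps\|f_\rho\|_1$. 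No regularity of the wave kernel is needed; one then applies the full $L^1\to L^1$ bound on $m_k(\sqrt L)$ from Theorem~\ref{main-theoremL1} to this smaller function and sums the geometric series in $k$. This is the replacement you need for your smoothness step.
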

\begin{remarks*}
(i)
Let $H^1(G)$ be the standard  Hardy space defined using
the automorphic dilations
\eqref{automorphic}.
Our proof shows that under condition
\eqref{Ftcondition},
$m(\sqrt L)$  maps  $H^1(G)$ to $L^1(G)$.

(ii) Note that by
 an application of the Cauchy-Schwarz inequality and Plan\-cherel's theorem
that the condition
  $$\sup_{t>0}\|\chi m(t\cdot)\|_{L^2_\beta}<\infty\,,
 \text{ for some   $\beta>d/2$}
$$ implies $\fA_R\lc_\gamma R^{\frac d2-\beta}$ for $R\ge 2$  and thus
  Theorem \ref{multipliers}
covers and extends the  above mentioned
multiplier results in \cite{MuSt-mult}, \cite{hebisch}.

(iii) More refined  results for fixed $p>1$ could  be deduced by
interpolation, but  such  results would  likely not be  sharp.
\end{remarks*}

%

\section{Some notation}
\subsection{\it Smooth cutoff functions}\label{cutoffsect}
We denote by  $\zeta_0$  an even $C^\infty$ function
 supported in $(-1,1)$  and assume that $\zeta_0(s)=1$ for $|s|\le
9/16$.
Let $\zeta_1(s)=\zeta_0(s/2)-\zeta_0(s)$ so that $\zeta_1$ is supported
in $(-2,-1/2)\cup (1/2,2)$.
If we set $\zeta_j(s) = \zeta_1(2^{1-j}s)$ then $\zeta_j$ is supported
in $(-2^{j},-2^{j-2} )\cup(2^{j-2},2^j)$
and we have $1=\sum_{j=0}^\infty \zeta_j(s)$ for all $s\in \bbR$.

Let $\eta_0$ be a $C^\infty$ function supported in $(-\frac {5\pi}8 ,
\frac {5\pi}8 )$
which has the property that $\eta_0(s)=1$ for
$|s|\le \frac {3\pi}8$  and
 satisfies
$\sum_{k\in \Bbb Z} \eta_0(t-k\pi)=1$
for all $t\in \bbR$.
For $l=1,2,\dots$ let
$\eta_l(s)=\eta(2^{l-1} s)-\eta_0(2^l s)$ so that $\eta_0(s)=
\sum_{l=1}^\infty  \eta_l(s)$ for $s\neq 0$.


\subsection{\it Inequalities}
We use the notation $A\lc B$ to indicate
$A\le CB$ for some constant $C$. We sometimes use the notation
$A\lc_\ka B$ to emphasize  that the implicit constant depends on the
parameter  $\ka$. We use $A\approx B$ if $A\lc B$ and $B\lc A$.

\subsection{\it Other notation.}
We use the definition
$$\widehat f(\xi)\equiv \cF f(\xi)=\int f(y) e^{-2\pi i\inn{\xi}{y}} dy$$ for the
Fourier transform in Euclidean space $\bbR^d$.

The convolution on $G$ is given by
$$f*g(x,u)= \int f(y,v) g(x-y,
u-v+\tfrac 12 \inn{\vec Jx}{y}) \,dy\, dv.$$

\section{Background on  groups of
Heisenberg type  and the  Schr\"odinger group}

For more on the material reviewed here see, e.g.,  \cite{folland}, \cite{edinburgh} and
\cite{MR-solv}.

\subsection{\it The Fourier transform on a group of Heisenberg type}\label{FTHEIS}

Let us first briefly  recall some  facts about the unitary representation theory of a   Heisenberg type group $G.$ In many contexts, it is useful to establish analogues of the Bargmann-Fock representations of the Heisenberg group for such groups \cite{kaplan-ricci} (compare also \cite{ricci},\cite{damek-ricci}). For our purposes, it will be more convenient to work with Schr\"odinger type representations. It is well-known that these can  be reduced to the case  of the Heisenberg group $\HH_{d_1/2}$ whose product is given by
$(z,t)\cdot (z',t')=(z+z',t+t'+\frac 1 2 \om(z,z')),$
where $\om$ denotes the {\it canonical symplectic form}
$\om(z,w) :=\lan Jz, w\ran$, with $J$ is as in \eqref{cansympl}.
For the convenience of the reader, we shall outline this reduction to the Heisenberg group.

Let us split coordinates $z=(x,y)\in\RR^{d_1/2}\times\RR^{d_1/2}$ in $\RR^{d_1},$
and consider the associated natural basis of left-invariant vector fields
of the Lie algebra of $\HH_{d_1/2}$,
\begin{eqnarray*}
\tilde X_j:=\pa_{ x_j}-\tfrac 1 2 y_j\pa_{ t},\quad \ \tilde Y_j:=\pa_{y_j}+
\tfrac 1 2  x_j \pa_t,
\ \ j=1,\dots,\frac{d_1}{2},\ \mbox{ and }   T:= \pa_t\,.
\end{eqnarray*}

For $\tau\in \RR\setminus\{0\}$, the
{\it Schr\"odinger representation} $\rho_\tau$ of $\HH_{d_1/2}$ acts on the Hilbert space $L^2(\RR^{d_1/2})$
as follows:
\begin{eqnarray*}
[\rho_\tau(x,y,t)h](\xi):=e^{2\pi i\tau(t+y\cdot \xi +\frac 1 2 y\cdot x)}
 h(\xi+x), \quad h\in L^2(\RR^{d_1/2}).
\end{eqnarray*}
This is an irreducible, unitary representation, and every irreducible unitary representation of $\HH_{d_1/2}$ which acts non-trivially on the center is in fact unitarily equivalent to exactly one of these, by the Stone-von Neumann theorem (a good reference for these and related results is for instance \cite{folland}; see also \cite{edinburgh}).

Next, if $\pi$ is any unitary representation, say, of a Heisenberg type group $G,$ we denote by
$$
\pi(f):=\int_G f(g)\pi(g)\, dg,\quad f\in L^1(G),
$$
the associated representation of the group algebra $L^1(G).$ For $f\in L^1(G)$ and $\mu\in\g_2^*=\RR^{d_2},$ it will also be useful to define the partial Fourier transform $f^\mu$ of $f$ along the center by
\Be\label{partialFT}
f^\mu(x)\equiv \cF_2 f(x,\mu):= \int_{\RR^{d_2}} f(x,u) e^{-2\pi i\mu\cdot u}\,du\quad (x\in \RR^{d_1}).
\Ee

 Going
back to the Heisenberg group (where $\g_2^*=\RR$),  if $f\in\cS(\HH_{d_1/2}),$   then it is
well-known and easily seen  that
$$\rho_\tau(f)=\int_{\RR^{{d_1}}}f^{-\tau}( z)\rho_{\tau}(z,0)\, dz$$
 defines
 a trace class operator on $L^2(\RR^{d_1/2}),$ and its trace is given by
\begin{equation}\label{traceformula}
\tr (\rho_\tau(f))=|\tau|^{-d_1/2}\,\int_\RR f(0,0,t)e^{2\pi i \tau t}\, dt=|\tau|^{-d_1/2}\,f^{-\tau}(0,0),
\end{equation}
for every $\tau\in\RR\setminus 0.$
\medskip

From these facts, one  derives the Plancherel formula for our
Heisenberg type group $G.$  Given $\mu\in\g^*_2=\RR^{d_2},\  \mu\ne 0,$
consider the matrix  $J_\mu$
as in \eqref{repofomegabyJ}. By \eqref{Jmusquared} we have
$J_\mu^2=-I$ if  $|\mu|=1,$   and   $J_\mu$ has only
eigenvalues $\pm i$. Since it is orthogonal
there exists an orthonormal basis
$$
X_{\mu,1},\dots,X_{\mu, \frac{d_1}{2}},Y_{\mu,1},\dots,Y_{\mu,\frac{d_1}{2}}
$$
of $\g_1=\RR^{d_1}$ which is symplectic with respect to the form
$\om_\mu,$
i.e., $\om_\mu$ is represented by the standard symplectic matrix $J$
in
\eqref{cansympl} with respect to this basis.

This means that, for every $\mu\in\RR^{d_2}\setminus\{0\},$ there is
an
orthogonal matrix $R_\mu=R_{\frac{\mu}{|\mu|}}\in O(d_1,\RR)$ such that
\begin{equation}\label{JmufromJ}
J_\mu=|\mu|R_\mu J \trans R_\mu.
\end{equation}
Condition \eqref{JmufromJ}
 is in fact equivalent to $G$ being of Heisenberg type.

Now consider the subalgebra $L^1_\rad(G)$ of $L^1(G),$ consisting of all
\lq radial' functions $f(x,u)$ in the sense that they depend only on $|x|$ and
 $u$. As for  Heisenberg groups (\cite{folland},\cite{edinburgh}), this algebra is
 commutative for arbitrary  Heisenberg type groups (\cite{ricci}), i.e.,
\begin{equation}\label{commutativity}
f*g=g*f \quad \mbox {  for every } f,g\in L^1_\rad(G).
\end{equation}
This can indeed  be reduced to the corresponding result on Heisenberg groups by applying the partial Fourier transform in the central variables.

The following lemma is easy  to check and establishes a useful link between representations of  $G$ and those of $\HH_{d_1/2}.$

\begin{lemma}\label{epi}
The mapping $\al_\mu:G\to \HH_{d_1/2},$ given by
$$
\al_\mu(z,u):=(\trans R_\mu z,\tfrac {\mu\cdot u}{|\mu|}),\quad
(z,u)\in
\RR^{d_1}\times\RR^{d_2},
$$
is an epimorphism of Lie groups. In particular, $G/\ker \al_\mu$ is isomorphic to  $\HH_{d_1/2},$ where $\ker \al_\mu=\mu^\perp$ is  the orthogonal complement of $\mu$ in the center $\RR^{d_2}$ of $G.$
\end{lemma}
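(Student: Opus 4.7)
The plan is a direct verification that $\alpha_\mu$ preserves the group law, followed by reading off surjectivity and the kernel from the explicit formula. Since $\alpha_\mu$ is the restriction of a linear map between vector spaces, smoothness and linearity are automatic; the only substantive point is the homomorphism property, which will be forced by the identity \eqref{JmufromJ}.

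First, I would compute the image of a product under $\alpha_\mu$ using the Heisenberg type group law \eqref{grouplaw}. For $(z,u),(z',u')\in G$, the first component gives ${}^tR_\mu(z+z')={}^tR_\mu z+{}^tR_\mu z'$, which matches the $\mathbb{H}_{d_1/2}$ first coordinate of the product $\alpha_\mu(z,u)\cdot\alpha_\mu(z',u')$. For the central components, using $\mu\cdot\langle\vec Jz,z'\rangle=\sum_i\mu_i\langle J_{U_i}z,z'\rangle=\langle J_\mu z,z'\rangle=\omega_\mu(z,z')$, one needs the identity
\[
\tfrac{1}{|\mu|}\,\omega_\mu(z,z')\;=\;\omega\bigl({}^tR_\mu z,\,{}^tR_\mu z'\bigr).
\]
This follows from \eqref{JmufromJ}: by orthogonality of $R_\mu$,
\[
\omega_\mu(z,z')=\langle J_\mu z,z'\rangle=|\mu|\,\langle R_\mu J\,{}^tR_\mu z,z'\rangle=|\mu|\,\langle J\,{}^tR_\mu z,\,{}^tR_\mu z'\rangle=|\mu|\,\omega\bigl({}^tR_\mu z,{}^tR_\mu z'\bigr).
\]
Plugging this back, the central coordinates on both sides agree, so $\alpha_\mu$ is a Lie group homomorphism.

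Next I would verify surjectivity and identify the kernel. Since ${}^tR_\mu\in O(d_1,\mathbb{R})$ is a bijection on $\mathbb{R}^{d_1}$ and the map $u\mapsto \mu\cdot u/|\mu|$ is surjective from $\mathbb{R}^{d_2}$ onto $\mathbb{R}$ (witnessed for example by $u=s\mu/|\mu|$), $\alpha_\mu$ is onto. An element $(z,u)$ lies in $\ker\alpha_\mu$ iff ${}^tR_\mu z=0$ and $\mu\cdot u=0$, i.e.\ $z=0$ and $u\in\mu^\perp\subset\mathbb{R}^{d_2}$. Thus $\ker\alpha_\mu=\{0\}\times\mu^\perp$, which we identify with $\mu^\perp\subset\mathfrak{z}(\mathfrak{g})$, and the induced map $G/\ker\alpha_\mu\to\mathbb{H}_{d_1/2}$ is a Lie group isomorphism.

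There is no real obstacle here; the only place where anything happens is the reduction of the symplectic form via \eqref{JmufromJ}, and that is precisely the defining property of $R_\mu$. The rest is bookkeeping with the explicit formulas for the two group laws.
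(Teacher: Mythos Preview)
Your proof is correct and is exactly the direct verification the paper has in mind; the paper itself omits the proof, merely remarking that the lemma ``is easy to check.'' The only nontrivial step, as you identify, is the identity $\omega_\mu(z,z')=|\mu|\,\omega({}^tR_\mu z,{}^tR_\mu z')$, which is precisely the content of \eqref{JmufromJ}.
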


Given  $\mu\in\RR^{d_2}\setminus\{0\},$ we can now define an  irreducible unitary representation $\pi_\mu$ of $G$ on $L^2(\RR^{d_1})$ by  putting
$$
\pi_\mu:=\rho_{|\mu|}\circ \al_\mu.
$$
Observe that then $\pi_\mu(0,u)=e^{2\pi i \mu\cdot u} I.$ In fact, any irreducible representation of $G$ with central character $ e^{2\pi i \mu\cdot u}$ factors through the kernel of $\al_\mu$ and hence, by the Stone-von Neumann theorem, must be equivalent to $\pi_\mu.$

One then computes that, for $f\in\cS(G),$
$$
\pi_\mu(f)=\int_{\RR^{d_1}}f^{-\mu}(R_\mu z)\rho_{|\mu|}(z,0)\, dz,
$$
so that the trace formula \eqref{traceformula} yields the analogous trace formula
$$\tr\, \pi_\mu(f)=|\mu|^{-\frac{d_1}2}\, f^{-\mu}(0)$$
on $G.$ The Fourier inversion formula in $\RR^{d_2}$ then leads  to
$$f(0,0)=\int_{\mu\in\RR^{d_2}\setminus\{0\}} \tr\, \pi_\mu(f)\, |\mu|^{\frac{d_1}2}\, d\mu.$$
When applied to $\de_{g^{-1}}*f,$ we arrive at the Fourier inversion formula
\begin{equation}\label{Fourierinversion}
f(g)=\int_{\mu\in\RR^{d_2}\setminus\{0\}} \tr\,( \pi_\mu(g)^*\pi_\mu(f))\,|\mu|^{\frac{d_1}2}\, d\mu,\quad g\in G.
\end{equation}
Applying this to $f^* *f$ at $g=0,$ where $f^*(g):=\overline{f(g^{-1})},$ we obtain the Plancherel formula
\begin{equation}\label{plancherel}
\|f\|_2^2=\int_{\mu\in\RR^{n}\setminus\{0\}}\, \|
\pi_\mu(f)\|^2_{HS}\,
|\mu|^{\frac{d_1}2}\, d\mu,
\end{equation}
where $\|T\|_{HS}=(\tr\, (T^*T))^{1/2}$ denotes the Hilbert-Schmidt norm.

\subsection{\it The Sub-Laplacian and the group Fourier transform}\label{SublGrouptr}
Let us next consider the group Fourier transform of our sub-Laplacian $L$ on $G.$

We first observe that $d\al_\mu(X)=\trans R_\mu X$ for every
$X\in\g_1=\RR^{d_1},$ if we view, for the time being, elements of the Lie algebra as tangential vectors at the identity element. Moreover,  by
\eqref{JmufromJ},
we see that
$$\trans R_\mu X_{\mu,1}, \dots, \trans R_\mu X_{\mu,d_1/2},\trans R_\mu Y_{\mu,1},\dots,\trans R_\mu Y_{\mu,d_1/2}
$$
forms a symplectic basis with respect to the canonical symplectic form
$\om$ on $\RR^{d_1}.$  We may thus assume without loss of generality
that this basis agrees with our basis $\tilde X_1,\dots,\tilde
X_{d_1/2}, \tilde Y_1,\dots,
\tilde Y_{d_1/2}$ of $\RR^{d_1},$ so that
$$
d\al_\mu(X_{\mu,j})=\tilde X_j, \ d\al_\mu(Y_{\mu,j})=\tilde Y_j, \quad j=1,\dots, d_1/2.
$$
By our construction of the representation $\pi_\mu,$ we thus obtain for the derived representation $d\pi_\mu$ of $\g$ that
\begin{equation}\label{derivedrep}
d\pi_\mu(X_{\mu,j})=d\rho_{|\mu|}(\tilde X_j), \  d\pi_\mu(Y_{\mu,j})=d\rho_{|\mu|}(\tilde Y_j),  \quad j=1,\dots, d_1/2.
\end{equation}
Let us define the sub-Laplacians $L_\mu$ on $G$ and $\tilde L$ on $\HH_{d_1/2}$ by
$$
L_\mu:=-\sum_{j=1}^{d_1/2} (X_{\mu,j}^2+Y_{\mu,j}^2),\quad
\tilde L:=-\sum_{j=1}^{d_1/2} (\tilde X_j^2+\tilde Y_j^2),
$$
where from now on we consider elements of the Lie algebra again as left-invariant differential operators. Then, by \eqref{derivedrep},
$$
d\pi_\mu(L_\mu)=d\rho_{|\mu|}(\tilde L).
$$
Moreover, since the basis $X_{\mu,1},\dots,X_{\mu,d_1/2},
Y_{\mu,1},\dots,Y_{\mu,d_1/2}$ and our original basis
$X_1,\dots,X_{d_1}$
  of $\g_1$ are both orthonormal bases, it is easy to verify that the distributions $L\de_0$ and $L_\mu\de_0$  agree. Since $Af=f*(A\de_0)$ for every left-invariant differential operator $A,$ we thus have  $L=L_\mu,$ hence
\begin{equation}\label{8.8}
d\pi_\mu(L)=d\rho_{|\mu|}(\tilde L).
\end{equation}
But, it follows immediately from our definition of Schr\"odinger representation  $\rho_\tau$  that $d\rho_\tau(\tilde X_j)=\pa_{\xi_j}$ and
$d\rho_\tau(\tilde Y_j)=2\pi i\tau \xi_j$, so that $d\rho_{|\mu|}(\tilde L)=-\Delta_\xi+(2\pi|\mu|)^2|\xi|^2$ is  a rescaled Hermite operator (\cf.  also \cite{folland}),  and an orthonormal basis of $L^2(\RR^{d_1/2})$ is given by the tensor products
$$
h^{|\mu|}_\al:=h^{|\mu|}_{\al_1}\otimes \cdots\otimes h^{|\mu|}_{\al_{d_1/2}},\quad \al\in\NN^{d_1/2},
$$
where $h_k^\mu(x):=(2\pi|\mu|)^{1/4}h_k((2\pi|\mu|)^{1/2}x),$ and
$$
h_k(x)=c_k(-1)^k e^{x^2/2}\frac{d^k}{dx^k}e^{-x^2}
$$
denotes the $L^2$-normalized Hermite function of order $k$ on $\RR.$ Consequently,
\begin{equation}\label{dpiL}
d\pi_\mu(L)h^{|\mu|}_\al= 2\pi|\mu|(\frac{d_1}{2}+2|\al|) h^{|\mu|}_\al, \quad \al\in
\NN^{d_1/2}.
\end{equation}
It is also easy to see that
\begin{equation}\label{dpiU}
d\pi_\mu(U_j)=2\pi i\mu_j I, \quad j=1,\dots,d_2.
\end{equation}
Now, the operators $L, -iU_1,\dots,-iU_{d_2}$ form
a  set of  pairwise strongly commuting self-adjoint operators,  with joint core
$\cS(G),$
 so that  they admit a joint spectral resolution, and we can thus give meaning to expressions
like  $\vphi(L,-iU_1,\dots,-iU_{d_2})$ for each continuous  function $\vphi$ defined on the corresponding joint spectrum. For simplicity of notation we write
$$U:=(-i U_1,\dots, -i U_{d_2}).$$
If $\vphi$ is bounded, then $\vphi(L,U)$ is a bounded,
left invariant operator on $L^2(G),$ so that it is a convolution operator
$$
\vphi(L,U)f=f*K_\vphi, \quad f\in \cS(G),
$$
with a convolution kernel $K_\vphi\in\cS'(G)$ which  will also be denoted by $\vphi(L,U)\de.$ Moreover, if $\vphi\in \cS(\RR\times\RR^{d_2}),$ then $\vphi(L,U)\de\in\cS(G)$  (\cf. \cite{MRS2}).
 Since functional calculus is compatible with  unitary representation theory,  we obtain in this case  from  \eqref{dpiL}, \eqref{dpiU} that
\begin{equation}\label{pimuphi}
\pi_\mu(\vphi(L,U)\de)h^{|\mu|}_\al= \vphi\Big(2\pi|\mu|(\frac{d_1}{2}+2|\al|),2\pi \mu\Big) h^{|\mu|}_\al
\end{equation}
(this identity in combination with the Fourier inversion formula could in fact be taken as the definition of $\vphi(L,U)\de$).
In particular, the Plancherel theorem implies then that the operator norm on $L^2(G)$ is given by
\begin{equation}\label{opnorm}
\|\vphi(L,U)\|=\sup\{|\vphi(|\mu|(\frac{d_1}{2}+2q), \mu)|: \mu\in\RR^{d_2}, q\in\NN\}.
\end{equation}
Finally, observe that
\begin{equation}\label{Kphimu}
K_\phi^\mu=\vphi(L^\mu,2\pi \mu)\de;
\end{equation}
this follows for instance by applying the unitary representation induced from the character $ e^{2\pi i \mu\cdot u}$ on the center of $G$ to $K_\vphi$.

We shall in fact  only work with functions of $L$ and $|U|,$ defined by
$$
\pi_\mu(\vphi(L,|U|)\de)h^{|\mu|}_\al= \vphi\Big(2\pi|\mu|(\frac{d_1}{2}+2|\al|),2\pi |\mu|\Big) h^{|\mu|}_\al.
$$
Observe that if $\vphi$ depends only on the second variable,  then
$\vphi(|U|)$ is just the  radial convolution operator
 acting only in the central variables, given by
\Be\label{hU}\cF_{\bbR^{d_2}}[\vphi(|U|) f](x,\mu)
= \vphi(2\pi|\mu|)\cF_{\bbR^{d_2}}f(x,\mu)\quad\text{ for all $\mu\in (\bbR^{d_2})^*$}\,.\Ee

\medskip

\subsection{\it Partial Fourier transforms and twisted convolution}\label{PFTtwisedconv}



For $\mu\in\g_2^*,$ one  defines the $\mu$--{\it twisted convolution} of two suitable functions (or distributions)  $\vphi$ and $\psi$ on $\g_1=\RR^{d_1}$ by
$$
(\vphi*_\mu \psi)(x):=\int_{\RR^{d_1}} \vphi(x-y)\psi(y)e^{-i\pi\om_\mu(x,y)}\,dy
$$
where $\omega_\mu$ is as in \eqref{omegamu}.
Then,  with $f^\mu$ as in \eqref{partialFT},
$$
(f*g)^\mu=f^\mu *_\mu g^\mu,
$$
where $f*g$ denotes the convolution product of the two functions $f,g\in L^1(G).$
Accordingly, the vector fields $X_j$ are transformed into the $\mu$-twisted first order differential operators $X_j^\mu$ such that $(X_j f)^\mu=X_j^\mu f^\mu,$ and the
sub-Laplacian is transformed into the $\mu$-twisted Laplacian $L^\mu,$ i.e.,
$$
(Lf)^\mu=L^\mu f^\mu=-\sum_{j=1}^{d_1} (X_j^\mu)^2,
$$
say for $f\in\cS(G)$. A computation shows that explicitly
\begin{equation}\label{Xjmu}
X_j^\mu=\pa_{x_j}+i\pi\om_\mu(\cdot,X_j).
\end{equation}

\subsection{\it The Schr\"odinger group $\{e^{itL^\mu}$\}}\label{Schrgroup}
It will be   important for us  that the
Schr\"odinger operators 
$e^{itL^\mu}, \
t\in \bbR,$ generated by $L^\mu$ can be computed explicitly.

 \begin{prop}\label{eit}

(i)  For $f\in \cS(G),$
\begin{equation}\label{twistedschr}
e^{itL^\mu} f=f*_\mu \ga_t^\mu, \quad t\ge 0,
\end{equation}
where $\ga_t^\mu\in \cS'(\RR^{d_1})$ is a tempered distribution.

(ii) For all $t$ such that  $\sin (2\pi t |\mu|)\ne 0$ the distribution
$\gamma^\mu_t$ is
given by
\Be\label{gammatmu}
\ga_t^\mu(x)=2^{-d_1/2}\Big(\frac {|\mu|}{\sin (2\pi t|\mu|)}\Big)^{d_1/2}\,
e^{-i\frac {\pi}2 |\mu|\cot(2\pi t|\mu|)|x|^2}.
\Ee

(iii) For all $t$ such that $\cos (2\pi t |\mu|)\ne 0$    the Fourier transform of $\gamma^\mu_t$ is given by
 \Be\label{gammatmuF}
\widehat{\ga_t^\mu}(\xi)=\frac 1{(\cos (2\pi t|\mu|))^{d_1/2}}\,
e^{i\frac{2\pi}{|\mu|}\tan(2\pi t|\mu|)|\xi|^2}.
\Ee
\end{prop}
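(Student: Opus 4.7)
My plan rests on the identification established in \S\ref{SublGrouptr}: under the partial Fourier transform in the central variables, left convolution on $G$ becomes $\mu$-twisted convolution on $\mathbb{R}^{d_1}$, so $L$ becomes $L^\mu$, which acts as right $\mu$-twisted convolution with $(L\delta)^\mu$. Part (i) is then essentially formal. Since $L^\mu$ commutes with right $\mu$-twisted convolutions against fixed tempered distributions and is essentially self-adjoint with core $\mathcal{S}(\mathbb{R}^{d_1})$, every bounded Borel function $\phi(L^\mu)$ is again a right $\mu$-twisted convolution operator; taking $\phi(s)=e^{its}$ produces $\gamma_t^\mu\in\mathcal{S}'(\mathbb{R}^{d_1})$.

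For part (ii) the cleanest approach is a Gaussian ansatz: set $\gamma_t^\mu(x)=A(t)e^{-iB(t)|x|^2}$ and use \eqref{Xjmu} to compute $L^\mu\gamma_t^\mu$ explicitly. Matching the Schr\"odinger equation $\partial_t\gamma_t^\mu = iL^\mu\gamma_t^\mu$ (with initial condition $\gamma_t^\mu\to\delta_0$ as $t\to 0^+$ in the twisted-convolution sense) reduces the problem to a Riccati-type ODE for $B$ and a first-order linear ODE for $A$, which are solved by $B(t)=\tfrac{\pi}{2}|\mu|\cot(2\pi t|\mu|)$ and the Mehler prefactor in \eqref{gammatmu}. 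A conceptually cleaner alternative is to rescale to $|\mu|=1$ via the unitary dilation $f\mapsto |\mu|^{d_1/4}f(|\mu|^{1/2}\cdot)$ (which conjugates $L^\mu$ to $|\mu|L^{\mu/|\mu|}$) and then use that, under $\rho_1$, the operator $L^\mu$ is conjugate to the Hermite Hamiltonian $-\Delta_\xi+(2\pi)^2|\xi|^2$, so that the classical Mehler formula for the Hermite semigroup, analytically continued to the imaginary axis, transports back through $\rho_1$ to the stated formula. The initial condition $\gamma_t^\mu\to\delta_0$ is a standard Fresnel-integral limit, with the branches of the fractional power in \eqref{gammatmu} fixed by requiring continuity on each interval where $\sin(2\pi t|\mu|)$ does not vanish.

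For part (iii), apply to \eqref{gammatmu} the oscillatory Gaussian Fourier transform identity
\begin{equation*}
\int_{\mathbb{R}^{d_1}} e^{-i\alpha|x|^2}\, e^{-2\pi i x\cdot \xi}\,dx
 = \Bigl(\frac{\pi}{i\alpha}\Bigr)^{d_1/2} e^{i\pi^2|\xi|^2/\alpha}
\end{equation*}
with $\alpha=\tfrac{\pi}{2}|\mu|\cot(2\pi t|\mu|)$. The exponent collapses to $(2\pi/|\mu|)\tan(2\pi t|\mu|)|\xi|^2$, and the prefactor simplifies via $\tan/\sin = 1/\cos$ to $(\cos(2\pi t|\mu|))^{-d_1/2}$ after absorbing the Fresnel phase $e^{-i\pi d_1/4}$ into the branch of the root. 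The hypothesis $\cos(2\pi t|\mu|)\ne 0$ is exactly what ensures $\alpha$ is finite and nonzero so that the oscillatory Gaussian transform applies.

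The main technical obstacle is bookkeeping: keeping branches of the fractional powers in \eqref{gammatmu} and \eqref{gammatmuF} consistent across the discrete set of singular times $\sin(2\pi t|\mu|)=0$ and $\cos(2\pi t|\mu|)=0$, and correctly tracking the Fresnel/Gauss-sum phases generated by each oscillatory Gaussian integral. This is most naturally handled by analytically continuing in $t$ from a complex half-plane where $\gamma_t^\mu$ is genuinely Schwartz, fixing the branch at one reference point, and then extending by continuity.
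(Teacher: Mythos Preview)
Your proposal is correct but proceeds by a genuinely different route from the paper. The paper does not solve the Schr\"odinger equation directly: instead it identifies $L^\mu$ with a twisted Laplacian $\tilde L_S^{|\mu|}$ on the symplectic vector space $(\mathfrak g_1,\tfrac{1}{|\mu|}\omega_\mu)$ with $S=\tfrac{1}{|\mu|}J_\mu$, and then invokes an external result (Theorem~5.5 of \cite{dissip}) which gives, for general $S$, the Fourier-side propagator formula
\[
\widehat{\Gamma_{t,iS}^{|\mu|}}(\xi)=\det(\cos 2\pi itS)^{-1/2}\,e^{-\frac{2\pi}{|\mu|}\sigma(\xi,\tan(2\pi itS)\xi)}.
\]
The Heisenberg-type condition $S^2=-I$ then collapses $\cos(2\pi itS)$ and $\tan(2\pi itS)$ to scalar trigonometric functions of $2\pi t$, yielding \eqref{gammatmuF} directly; \eqref{gammatmu} is obtained \emph{afterwards} by Fourier inversion. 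So the paper goes (iii)$\Rightarrow$(ii), whereas you go (ii)$\Rightarrow$(iii).

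Your approach is more elementary and self-contained: the Gaussian ansatz works cleanly because on radial functions $L^\mu=-\Delta+\pi^2|\mu|^2|x|^2$ is precisely a rescaled Hermite operator (this is where $J_\mu^2=-|\mu|^2I$ and skew-symmetry enter for you), so the Riccati/Mehler computation is short. The paper's approach is terser given the cited machinery and, more importantly, inherits the branch determination of the fractional powers from the general theorem, whereas in your argument this remains the one point requiring genuine care---your plan to fix branches by analytic continuation from the heat semigroup (complex $t$ with $\operatorname{Im}t<0$) is the standard and correct way to do it, but it is real work rather than bookkeeping.
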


%

Indeed, for $\mu\ne 0,$ let us consider the symplectic vector space $V:=\g_1,$ endowed with the symplectic form $\si:=\frac 1{|\mu|}\om_\mu.$ Notice first that, because of \eqref{Jmusquared},
the volume form $\sigma^{\wedge (d_1/2)},$ i.e., the ${d_1}/{2}$-fold exterior product of $\si$
with itself, can be identified with Lebesgue measure on $\RR^{d_1}.$ As in \cite{dissip}, we then associate to the
pair $(V,\si)$ the Heisenberg group $\HH_V,$  with underlying manifold $V\times\RR$  and endowed with the product
$$
(v,u)(v',u'):=(v+v',u+u'+\frac 12 \si(v,v')).
$$
It  is  then common to denote  for $\tau\in\RR$ the  $\tau$-twisted convolution
by $\times_\tau$ in place of  $*_\tau$ (compare \S5 in \cite{dissip}).
The $\mu$-twisted convolution associated to the group  $G$  will then agree
with the $|\mu|$-twisted convolution $\times_{|\mu|}$ defined on the symplectic vector
space $(V,\si).$
 Moreover, if we
identify  the $X_j\in V$  also with left-invariant vector fields on
$\HH_V,$ then \eqref{Xjmu} shows that
$$X_j^\mu=\pa_{x_j}+i\pi|\mu|\si(\cdot,X_j)$$ agrees with the
corresponding $|\mu|$-twisted differential operators $\tilde
X_j^{|\mu|}$ defined in \cite{dissip}.

Accordingly, our $\mu$-twisted Laplacian $L^\mu$ will agree with the $|\mu|$-twisted Laplacian
$$\tilde L_S^{|\mu|}=\tilde{\cal L}_{-I}^\mu=\sum_{j=1}^{d_1} (\tilde X_j^{|\mu|})^2
$$
associated to the symmetric matrix $A:=-I$ in \cite{dissip}. Here,
$$S=-A\frac 1{|\mu|}J_\mu=\frac 1{|\mu|}J_\mu.
$$
Consequently,
$$ e^{itL^\mu}=e^{it\tilde L_S^{|\mu|}}.
$$
From Theorem 5.5 in \cite{dissip} we therefore obtain that
for $f\in L^2(V)$
$$
\exp (\frac {it}{|\mu|} \tilde L_S^{|\mu|})f=f \times_{|\mu|}\Gamma_{t,iS}^{|\mu|}, \quad
t\ge0,
$$
where, $\Gamma_{t,iS}^{|\mu|}$ is a tempered distribution  whose Fourier
transform is given by
$$
\widehat{\Gamma_{t,iS}^{|\mu|}}(\xi)=\frac 1{\sqrt{\det(\cos 2\pi itS)}}
e^{-\frac{2\pi}{|\mu|}\sigma(\xi,\tan(2\pi itS)\xi)}
$$
whenever $\det(\cos (2\pi it S)\ne 0$. Since $S^2=-I$ because of \eqref{Jmusquared}, one sees that
$$\sin(2\pi itS)=i\sin(2\pi t) S,\ \cos(2\pi itS)=\cos(2\pi t)I.
$$
Note also that $\si(\xi,\eta)=\lan S\xi,\eta\ran.$ We thus see that \eqref{twistedschr} and  \eqref{gammatmuF} hold true,
 and  the formula \eqref{gammatmu} follows by Fourier inversion (\cf. Lemma 1.1 in \cite{MR1}).

\section{An approximate subordination formula}\label{subordination}

We shall use Proposition \ref{eit} and the following subordination formula to
obtain manageable expressions for the wave operators.

\begin{prop} \label{subordop}Let $\chi_1\in C^\infty$ so that $\chi_1(s)=1$ for $s\in [1/4,4]$. Let  $g$ be a $C^\infty$ function  supported in $(1/2,2)$.
Then there are $C^\infty$ functions  $a_\la$ and $\rho_\la$,
depending linearly on $g$,  with $a_\la$
  supported in $[1/16,4]$,  and $\rho_\la$ be supported in $[1/4,4]$,
so that for all $K=2,3,\dots$, $N_1, N_2\ge 0$,  and all $\la\ge 1$
\Be\label{alaest}
\sup_s\big|\partial_s^{N_1} \partial_\la^{N_2}  a_\la(s)\big|\le c(K)
\la^{-N_2} \sum_{\nu=0}^K\|g^{(\nu)}\|_\infty,
\quad \,\,N_1+N_2< \frac{K-1}2,
\Ee
\Be\label{rholaest}
\sup_s\big|\partial_s^{N_1} \partial_\la^{N_2}  \rho_\la(s)\big|\le c(K,N_2)
\la^{N_1+1-K}\sum_{\nu=0}^K\|g^{(\nu)}\|_\infty,
 \quad N_1\le K-2\,.
\Ee
 and the formula
\Be
g(\la^{-1}\sqrt L)e^{i\sqrt L}=\chi_1(\la^{-2} )L)
\sqrt{\la }\int e^{i\frac{\la }{4s}}  a_\la(s) \,
e^{i s L/\la}\,
ds\,+\, \rho_\la(\la^{-2}L)
\label{localmultop}
\Ee
holds.
For any $N\in \bbN$, the functions $\la^{N} \rho_\la$ are uniformly bounded in the topology of the Schwartz-space, and the operators  $\rho_\la(\la^{-2}L)$
are  bounded on $L^p (G)$, $1\le p\le \infty$,
 with operator norm $O(\la^{-N})$.
\end{prop}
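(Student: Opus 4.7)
The plan is to apply the method of stationary phase in reverse. By the spectral theorem for the positive self-adjoint operator $L$, the operator identity \eqref{localmultop} reduces to the scalar identity
\[
g(\tau/\lambda)\,e^{i\tau}=\chi_1\!\Bigl(\tfrac{\tau^2}{\lambda^2}\Bigr)\sqrt{\lambda}\int e^{i\lambda\Phi(s,\tau/\lambda)}\,a_\lambda(s)\,ds+\rho_\lambda\!\Bigl(\tfrac{\tau^2}{\lambda^2}\Bigr),\qquad \tau\ge 0,
\]
with phase $\Phi(s,\sigma):=\tfrac{1}{4s}+s\sigma^2$. A direct computation shows that $\Phi(\cdot,\sigma)$ has the unique stationary point $s_\ast(\sigma)=1/(2\sigma)$, with $\Phi(s_\ast,\sigma)=\sigma$ and $\Phi_{ss}(s_\ast,\sigma)=4\sigma^3>0$; when $\sigma\in[1/2,2]=\supp(g)$, the stationary point lies in $[1/4,1]$, well inside the prescribed amplitude support $[1/16,4]$.

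First I would construct $a_\lambda$ by inverting the stationary phase expansion term by term. Standard asymptotics yield
\[
\sqrt{\lambda}\int e^{i\lambda\Phi(s,\sigma)}\,a(s)\,ds\sim e^{i\lambda\sigma}\sum_{k\ge 0}\lambda^{-k}\Psi_k[a](\sigma),
\]
where each $\Psi_k$ is an explicit linear differential operator in $s$ of order at most $2k$ evaluated at $s_\ast(\sigma)$; in particular $\Psi_0[a](\sigma)=\sqrt{\pi/2}\,e^{i\pi/4}\sigma^{-3/2}a(s_\ast(\sigma))$, which is invertible on smooth functions. Define $a^{(j)}$ recursively by $\Psi_0[a^{(0)}]=g$ and $\Psi_0[a^{(j)}]=-\sum_{k=1}^{j}\Psi_k[a^{(j-k)}]$ for $1\le j\le K-1$, and set $a_\lambda:=\sum_{j=0}^{K-1}\lambda^{-j}a^{(j)}$. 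Each $a^{(j)}$ is smooth, depends linearly on derivatives of $g$ of order at most $2j$, and is supported in $[1/4,1]\subset[1/16,4]$; the bound \eqref{alaest} follows from these explicit formulas.

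Next, with this choice the difference $e^{-i\lambda\sigma}\sqrt{\lambda}\int e^{i\lambda\Phi(s,\sigma)}a_\lambda(s)\,ds-g(\sigma)$ is precisely the $K$-th remainder of the stationary-phase expansion applied to the polynomial-in-$\lambda^{-1}$ amplitude $a_\lambda$. H\"ormander's remainder lemma provides $C^{N_1}$ bounds of order $\lambda^{-K+1+N_1}$ on any compact $\sigma$-interval in which $s_\ast(\sigma)\in[1/16,4]$; outside this range the phase has no critical point in $\supp(a_\lambda)$ and repeated integration by parts gives $O(\lambda^{-N})$ decay for the integral and all its $\sigma$-derivatives. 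Defining $\rho_\lambda(\sigma^2):=g(\sigma)e^{i\lambda\sigma}-\chi_1(\sigma^2)\sqrt{\lambda}\int e^{i\lambda\Phi(s,\sigma)}a_\lambda(s)\,ds$ and choosing $\chi_1$ to be a smooth cutoff supported near $[1/4,4]$ and equal to $1$ there, $\rho_\lambda$ has compact support and, after the smooth change of variables $\eta=\sigma^2$ (harmless away from $\sigma=0$), satisfies \eqref{rholaest}. In particular $\lambda^N\rho_\lambda$ is uniformly bounded in the Schwartz topology of compactly supported functions, and the Hulanicki--Stein (equivalently Christ, Mauceri--Meda) spectral multiplier theorem on stratified groups cited in the introduction then yields the operator-norm bound $O(\lambda^{-N})$ for $\rho_\lambda(\lambda^{-2}L)$ on every $L^p(G)$.

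The main technical obstacle is the careful bookkeeping in the stationary-phase analysis: inverting $\Psi_0$ at each step, tracking the precise derivative loss in both $s$ and $\lambda$ that matches \eqref{alaest} and \eqref{rholaest}, and verifying the linear dependence of $a_\lambda$ on a bounded number of derivatives of $g$. Everything else is essentially routine.
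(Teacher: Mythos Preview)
Your approach via iterative stationary-phase inversion is conceptually reasonable but differs from the paper's, and as written it does not deliver the proposition as stated.

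The main gap is that your amplitude $a_\lambda = \sum_{j=0}^{K-1}\lambda^{-j}a^{(j)}$ depends on the truncation level $K$, whereas the proposition requires a \emph{single} pair $(a_\lambda,\rho_\lambda)$ for which \eqref{alaest} and \eqref{rholaest} hold simultaneously for every $K\ge 2$. The individual coefficients $a^{(j)}$ are $K$-independent, but the truncated sum is not, and the full series $\sum_{j\ge 0}\lambda^{-j}a^{(j)}$ is only asymptotic. A related problem concerns the derivative count: since each $\Psi_k$ is a differential operator of order $2k$ in $s$, your $a^{(j)}$ depends linearly on $g,\dots,g^{(2j)}$, so $\partial_s^{N_1}a_\lambda$ already involves $g^{(2K-2+N_1)}$ and cannot be controlled by $\sum_{\nu=0}^{K}\|g^{(\nu)}\|_\infty$ as \eqref{alaest} demands. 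Truncating at a lower level repairs the derivative count but then weakens the decay of $\rho_\lambda$ below what \eqref{rholaest} requires; no finite truncation meets both constraints simultaneously.

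The paper sidesteps this by constructing $a_\lambda$ as a closed-form oscillatory integral rather than a truncated series. One takes the Fourier transform of $x\mapsto g(\sqrt{x})e^{i\lambda\sqrt{x}}$, localizes the Fourier variable $\xi$ to $2\pi\xi/\lambda\in[\tfrac18,2]$ by a cutoff $\varsigma$, completes the square in the phase, and after the substitution $2\pi\xi/\lambda\mapsto s$ arrives at the explicit, $K$-independent formula
\[
a_\lambda(s)=\pi^{-1}\sqrt{\lambda}\,\varsigma(s)\int\Bigl(y+\tfrac{1}{2s}\Bigr)\,g\Bigl(y+\tfrac{1}{2s}\Bigr)\,e^{-i\lambda s y^2}\,dy.
\]
The estimate \eqref{alaest} then follows from a direct bound on this quadratic-phase integral (Lemma~\ref{stationaryphase}), which controls it by $\sum_{\nu\le K}\|g^{(\nu)}\|_\infty$ precisely under the constraint $N_1+N_2<(K-1)/2$. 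The complementary Fourier piece defines $\tilde\rho_\lambda$, and \eqref{rholaest} follows by nonstationary phase since the stationary point has been excised there. In effect the paper's integral is the resummation of your asymptotic series, obtained by keeping the oscillatory integral intact and estimating it directly rather than expanding and truncating.
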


For explicit formulas of $a_\la$ and $\rho_\la$ see
Lemma \ref{subordlemma} below.
The proposition is essentially an application of the method  of stationary phase
where we keep track on how  $a_\la$, $\rho_\la$ depend on $g$.
We shall need an auxiliary lemma.

\begin{lemma} \label{stationaryphase}
Let $K\in \Bbb N$ and  $g\in C^K(\Bbb R)$. Let $\zeta_1\in
C^\infty(\bbR)$ be
supported in $(1/2,2)\cup(-2,-1/2)$ and $\Lambda\ge 1$ and $\ell\ge 0$.
Then, for all nonnegative integers $M$,
\begin{multline}
\label{fixedellest}
\Big| \int y^{2M} g(y) \zeta_1(\La^{1/2}2^{-\ell} y) e^{i\La y^2}
dy\Big|\ \\
\le C_{M,K} 2^{-2\ell K} \big( 2^{\ell} \La^{-1/2}\big)^{1+2M} \sum_{j=0}^K
 (2^\ell \La^{-1/2})^j \|g^{(j)}\|_\infty\,.
\end{multline}
Moreover, for  $0\le m<\frac{K-1}2$,
\Be\label{sumell}
\Big|\Big(\frac{d}{d\Lambda}\Big)^m \int  g(y) e^{i\La y^2}
dy\Big|
\le C_{K} \La^{-m-\frac 12}  \sum_{j=0}^K
 \La^{-j/2} \|g^{(j)}\|_\infty.
\Ee
\end{lemma}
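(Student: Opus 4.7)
The plan is to establish \eqref{fixedellest} by a scaling reduction together with a non-stationary phase integration by parts, and then to deduce \eqref{sumell} by combining \eqref{fixedellest} with a dyadic decomposition in $y$.

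For part (i), I first change variables $y=2^\ell\Lambda^{-1/2}z$, so that $\zeta_1(\Lambda^{1/2}2^{-\ell}y)=\zeta_1(z)$ is supported in $|z|\in(1/2,2)$ and the phase $\Lambda y^2$ becomes $2^{2\ell}z^2$. The integral takes the form
\begin{equation*}
(2^\ell\Lambda^{-1/2})^{2M+1}\int z^{2M}\zeta_1(z)\,g(2^\ell\Lambda^{-1/2}z)\,e^{i2^{2\ell}z^2}\,dz.
\end{equation*}
On the support of $\zeta_1$ the phase derivative satisfies $|\partial_z(2^{2\ell}z^2)|=2^{2\ell+1}|z|\gtrsim 2^{2\ell}$, so $K$ applications of the non-stationary phase operator $\frac{1}{i\,2^{2\ell+1}z}\partial_z$ (whose transpose acts on the amplitude) produce an overall gain $2^{-2\ell K}$ while the amplitude receives at most $K$ derivatives. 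By Leibniz, differentiating the inner factor $g(2^\ell\Lambda^{-1/2}z)$ a total of $j$ times contributes $(2^\ell\Lambda^{-1/2})^j g^{(j)}(2^\ell\Lambda^{-1/2}z)$, whereas derivatives falling on $z^{2M}$ or $\zeta_1$ stay bounded by constants depending only on $M$ and $K$. Integrating the resulting bounded amplitude over the compact support of $\zeta_1$ and multiplying by the prefactor $(2^\ell\Lambda^{-1/2})^{2M+1}$ yields precisely \eqref{fixedellest}.

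For part (ii), differentiation under the integral sign gives
\begin{equation*}
\Big(\frac{d}{d\Lambda}\Big)^m\int g(y)\,e^{i\Lambda y^2}\,dy=i^m\int y^{2m}g(y)\,e^{i\Lambda y^2}\,dy,
\end{equation*}
and I decompose the integrand using the partition of unity $1=\zeta_0(\Lambda^{1/2}y)+\sum_{\ell\ge 0}\zeta_1(\Lambda^{1/2}2^{-\ell}y)$ (see \S\ref{cutoffsect}). The low-frequency piece is supported in $|y|\lesssim\Lambda^{-1/2}$ and is bounded trivially by $C\,\Lambda^{-m-1/2}\|g\|_\infty$. For each $\ell\ge 0$, applying \eqref{fixedellest} with $M=m$ yields
\begin{equation*}
C_K\,\Lambda^{-m-\frac12}\sum_{j=0}^K 2^{\ell(1+2m-2K+j)}\,\Lambda^{-j/2}\,\|g^{(j)}\|_\infty.
\end{equation*}

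The crucial step is the summation in $\ell$. The hypothesis $m<(K-1)/2$, equivalently $2m+1<K$, forces the exponent $1+2m-2K+j$ to be strictly negative for every $j\in\{0,1,\dots,K\}$; each of the $K+1$ resulting geometric series therefore sums to an absolute constant, and adding the low-frequency bound produces \eqref{sumell}. This bookkeeping is the only technically delicate step and it is the sole place where the constraint on $m$ enters: the restriction is calibrated so that the gain $2^{-2\ell K}$ from $K$ integrations by parts strictly dominates the largest growth factor $(2^\ell)^{2m+1+j}$ which arises from the combination of the $y^{2m}$ prefactor and up to $K$ amplitude derivatives of the Schwartz input $g$.
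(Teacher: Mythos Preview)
Your proof is correct and follows essentially the same strategy as the paper: non-stationary phase integration by parts for \eqref{fixedellest}, then dyadic decomposition and summation in $\ell$ for \eqref{sumell}. The only cosmetic difference is that for \eqref{fixedellest} you first rescale $y=2^\ell\Lambda^{-1/2}z$ before integrating by parts, whereas the paper works directly in the $y$-variable and tracks the resulting amplitude functions $\zeta_{j,K,M,\Lambda}$ through an explicit induction on $K$; your normalization makes the bookkeeping a bit shorter but the underlying argument is identical.
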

\begin{proof}
 By
induction on $K$ we  prove the following assertion labeled

\medskip

\noindent $(\cA_K)$: \
If $g\in C^K$ then
\begin{multline} \label{indclaim}
\int y^{2M} g(y) \zeta_1(\La^{1/2}2^{-\ell} y) e^{i\La y^2}
dy \\=
\La^{-K}\sum_{j=0}^K \int g^{(j)}(y) \zeta_{j,K,M,\La}(y)
e^{i\La y^2} dy
\end{multline}
where $\zeta_{j,K,M,\La}$
is supported  on $\{y: |y|\in
[2^{\ell-1}\La^{-1/2}, 2^{\ell+1}\La^{-1/2}]\}$ and,
for $0\le j\le K$,  satisfies the
differential inequalities
\Be\label{diffineq}
\big|\zeta_{j,K,M,\La}^{(n)}(y)\big|
\le C(j,K,M,n) (2^{-\ell} \La^{1/2})^{n-2M} 2^{-\ell(2K-j)} \La^{K-j/2}\,.
\Ee
Clearly this assertion implies
\eqref{fixedellest}.

We set
$ \zeta_{0,0,M,\La}(y) =y^{2M}
\zeta_1(\La^{1/2}2^{-\ell} y)$ and the claim
$(\cA_K)$ is immediate  for $K=0$.
It remains to show that  the implication
$(\cA_K)\implies (\cA_{K+1})$, holds for all  $K\ge 0$.

Assume 
$(\cA_K)$ for some $K\ge 0$  and let  $g\in C^{K+1}$.
We let $0\le j\le K$ and examine the $j$th term in the sum in
\eqref{indclaim}.
Integration by parts yields
\begin{multline*}
\int g^{(j)}(y) \zeta_{j,K,M,\La}(y)
e^{i\La y^2} dy\\
=
i \int \Big[ \frac{  g^{(j+1)}(y)}{2y\La} \zeta_{j,K,M,\La}(y)
+  g^{(j)}(y) \frac{d}{dy}\Big(\frac{\zeta_{j,K,M,\La}(y) }{2y\La}\Big)
\,\Big] e^{i\La y^2} dy\,.
\end{multline*}
The sum $\La^{-K}\sum_{j=0}^K \int g^{(j)}(y) \zeta_{j,K,M,\La}(y)
e^{i\La y^2} dy$ can now be rewritten as
\[
\La^{-K-1}\sum_{\nu=0}^{K+1} \int g^{(\nu)}(y) \zeta_{\nu,K+1,M,\La}(y)
e^{i\La y^2} dy
\]
where
\begin{align*}
\zeta_{0,K+1,M,\La}(y) &= i
\frac{d}{dy}\Big(\frac{\zeta_{0,K,M,\La}(y) }{2y}\Big)\,,
\\
\zeta_{\nu,K+1,M,\La}(y) &= i
\frac{d}{dy}\Big(\frac{\zeta_{\nu,K,M,\La}(y) }{2y}\Big)
+i \frac {\zeta_{\nu-1,K,M,\La}(y) }{2y}, \quad 1\le \nu \le K,
\\
\zeta_{K+1,K+1,M,\La}(y) &=i
\frac {\zeta_{K,K,M,\La}(y) }{2y}\,.
\end{align*}
On  the support of the cutoff functions we have $|y|\ge 2^{\ell-1}\La^{-1/2}$ and the asserted  differential inequalities for
the
functions $\zeta_{\nu,K+1,M,\La}$ can be  verified using the Leibniz rule.
This finishes the proof of
the implication  $(\cA_{K})\implies
(\cA_{K+1})$ and thus the proof of
\eqref{fixedellest}.


We now prove \eqref{sumell}. Let $\zeta_0$ be an even $C^\infty$ function
supported in $(-1,1)$  and assume that $\zeta_0(s)=1$ for $|s|\le
1/2$.
Let $\zeta_1(s)=\zeta_0(s/2)-\zeta_0(s)$ so that $\zeta_1$ is supported
in $[-2,-1/2]\cup[1/2,2]$, as in the   statement of \eqref{fixedellest}.
We split the left hand side of
\eqref{sumell} as $\sum_{\ell=0}^\infty I_{\ell,m}$ where
$$
I_{\ell,m}=
 \int (iy^{2})^m  g(y) \zeta_1(\La^{1/2}2^{-\ell} y) e^{i\La y^2}
dy, \qquad \text{for } \ell>0\,
$$
and $I_{0,m}$ is defined similarly with
$\zeta_0(\La^{1/2} y)$ in place of $\zeta_1(\La^{1/2}2^{-\ell} y)$.
Clearly $|I_{0,m}|\lc \La^{-m-1/2}\|g\|_\infty$ and by
\eqref{fixedellest}
$$I_{\ell,m}\lc_{m,K} \sum_{j=0}^K
 2^{-\ell (2K-2m-j-1)}  \La^{-\frac{1+2m+j}2} \|g^{(j)}\|_\infty.$$
Since $j\le K$ we can sum in $\ell$ if $m<\frac{K-1}2$ and the
assertion \eqref{sumell} follows.
\end{proof}

\begin{lemma}\label{subordlemma}
Let $K\in \Bbb N$ and let $g\in C^K(\Bbb R)$ be supported in $(1/2,2)$, and let $\chi_1\in C^\infty_c(\bbR)$ so that $\chi_1(x)=1$ on $(1/4,4)$.
Also let $\varsigma$ be a $C^\infty_0(\bbR)$ function
  supported in $[1/9, 3]$ with the property that $\varsigma(s)=1$ on
  $[1/8,2]$.
Then
\Be
g(\sqrt x)e^{i\la\sqrt x}= \chi_1(x)
\Big[
\sqrt{\la }\int e^{i\frac{\la }{4s}}  a_\la(s) \,
e^{i\la s x}\,
ds\,+\,\widetilde  \rho_\la(x)\Big]
\label{localmult}
\Ee
where $a_\la$ is  supported in $[\frac 1{16},4]$,
and
\Be \label{alambda}
a_\lambda(s) =\pi^{-1}\sqrt{\lambda} \varsigma(s)\int (y+\tfrac 1{2s}) g(y+\tfrac 1{2s})
e^{-i\la sy^2} \,dy
\Ee
and
\Be \label{rholambda} \cF[\widetilde{\rho}_\la](\xi )=
(1- \varsigma(\tfrac{2\pi\xi}{\la})) \cF[g(\sqrt{\cdot}) e^{i\la \sqrt{\cdot}}] (\xi)\,.
\Ee
Let $\rho_\la=\chi_1\widetilde \rho_\la$.
Then the   estimates \eqref{alaest} and \eqref{rholaest}
hold for all  $\la\ge 1$.
\end{lemma}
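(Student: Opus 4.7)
The plan is to establish \eqref{localmult} by Fourier synthesis on $\bbR$ combined with a stationary-phase-style change of variables, and then derive the derivative bounds \eqref{alaest} from Lemma \ref{stationaryphase} and \eqref{rholaest} from a non-stationary integration-by-parts argument.

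Let $\phi(x):=g(\sqrt x)\,e^{i\la\sqrt x}$. Since $g$ is supported in $(1/2,2)$, $\phi$ vanishes outside $(1/4,4)$, hence $\phi=\chi_1\phi$. Substituting $y=\sqrt x$ in the Fourier integral gives
$$\widehat\phi(\xi)=2\int y\,g(y)\,e^{i(\la y-2\pi y^2\xi)}\,dy,$$
which I split by $1=\varsigma(2\pi\xi/\la)+(1-\varsigma(2\pi\xi/\la))$. The complementary piece defines $\widetilde\rho_\la$ by \eqref{rholambda}. For the $\varsigma$-piece I change variable $s=2\pi\xi/\la$ in the inverse Fourier transform, complete the square
$$\la y-\la sy^2=\frac{\la}{4s}-\la s\Bigl(y-\frac{1}{2s}\Bigr)^2,$$
and translate $y\mapsto y+1/(2s)$. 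After Fubini the result equals $\sqrt\la\int e^{i\la/(4s)}a_\la(s)\,e^{i\la sx}\,ds$ with $a_\la$ as in \eqref{alambda}; multiplying by $\chi_1$ yields \eqref{localmult}.

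For \eqref{alaest}, I write $a_\la(s)=\pi^{-1}\sqrt\la\,\varsigma(s)\,H(\la s,s)$ with $H(\La,s):=\int G_s(y)\,e^{-i\La y^2}\,dy$ and $G_s(y):=(y+\tfrac1{2s})\,g(y+\tfrac1{2s})$. The key input is a $y^{2M}$-weighted version of \eqref{sumell},
$$\Bigl|\Bigl(\tfrac d{d\La}\Bigr)^m\int y^{2M}G_s(y)\,e^{-i\La y^2}\,dy\Bigr|\lc_{K}\La^{-m-M-\tfrac12}\sum_{j=0}^{K}\La^{-j/2}\|\partial_y^j(y^{2M}G_s)\|_\infty,$$
valid for $m+M<(K-1)/2$, proved by the same dyadic sum in $\ell$ as in the proof of \eqref{sumell} (summing \eqref{fixedellest}). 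Each $\partial_\la$ applied to $a_\la$ either hits the prefactor $\sqrt\la$ (producing $\la^{-N_2}$-type factors) or brings down an $-isy^2$ inside $H$; in the latter case the extra $y^2$ is absorbed by the $\La^{-1}$ gain in the stationary-phase bound, giving $O(\la^{-1})$ per $\partial_\la$. Each $\partial_s$ either acts on $\varsigma$ or on the shift $1/(2s)$ in $G_s$ (uniformly bounded for $s\in\supp\varsigma$), or brings down $-i\la y^2$ from the phase; in the last case the apparent $\la$ loss is again cancelled by the $(\la s)^{-1}$ gain supplied by the companion $y^2$. Distributing these contributions via the chain rule $\partial_\la=s\,\partial_\La|_{\La=\la s}$ delivers \eqref{alaest} in the stated range $N_1+N_2<(K-1)/2$.

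For \eqref{rholaest}, on the support of $1-\varsigma(2\pi\xi/\la)$ the stationary point $y_0=\la/(4\pi\xi)$ of the phase $\la y-2\pi y^2\xi$ lies outside $[1/2,2]\supset\supp g$, so $|\la-4\pi y\xi|\gtrsim\la+|\xi|$ uniformly for $y\in\supp g$. Integration by parts $K$ times with $L_y:=\partial_y/\bigl(i(\la-4\pi y\xi)\bigr)$ yields $|\widehat\phi(\xi)|\lc_K(\la+|\xi|)^{-K}\sum_{\nu\le K}\|g^{(\nu)}\|_\infty$. Derivatives in $\la$ of the cutoff $1-\varsigma(2\pi\xi/\la)$ produce only harmless factors localized to $\xi\sim\la$ by scale homogeneity. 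Inverting the Fourier transform and inserting $(2\pi i\xi)^{N_1}$ to account for $\partial_x^{N_1}$ gives
$$|\partial_x^{N_1}\partial_\la^{N_2}\widetilde\rho_\la(x)|\lc\int(\la+|\xi|)^{-K}|\xi|^{N_1}\,d\xi\lc \la^{N_1+1-K}\sum_\nu\|g^{(\nu)}\|_\infty$$
once $K>N_1+1$, proving \eqref{rholaest}. The main obstacle I anticipate is the chain-rule bookkeeping in step three, where each apparent $\la$ loss under $\partial_s$ must be matched to an extra $y^2$ that is absorbed by the $y^{2M}$-weighted stationary-phase bound; once that accounting is in place the rest of the argument is routine.
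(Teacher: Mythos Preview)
Your proposal is correct and follows essentially the same route as the paper: Fourier inversion with the $\varsigma(2\pi\xi/\la)$ splitting, completion of the square and the change of variable $s=2\pi\xi/\la$ to obtain \eqref{localmult}; then non-stationary integration by parts in $y$ for $\widetilde\rho_\la$, and Lemma~\ref{stationaryphase} (via the observation that each extra $y^2$ is an extra $\partial_\Lambda$) for the $a_\la$ bounds. The paper organizes the $a_\la$ estimate by working with $\la^{-1/2}a_\la$ and expanding $\partial_s^{N_1}\partial_\la^{N_2}$ directly via Leibniz into terms of the form \eqref{typicalterm}, but this is exactly the bookkeeping you outline through $H(\la s,s)$ and the chain rule; the range $N_1+N_2<(K-1)/2$ arises identically in both versions.
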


\begin{proof}
Let $\Psi_\la$ be the Fourier transform of  $x\mapsto g(\sqrt x)
e^{i\la \sqrt{x}} $, i.e.
\Be \Psi_\la(\xi)=\int g(\sqrt x)e^{i\la  \sqrt{x}}
e^{-2\pi i\xi x}\, dx\,
= \int 2sg(s) e^{i(\la  s-2\pi \xi s^2)}\, ds
\label{Psifirst}
\Ee

Observe that $g(\sqrt x)
=0$ for $x\notin (1/4,4) $, thus  $g(\sqrt x)=\chi_1(x)g(\sqrt x)$.
By the Fourier inversion formula we have
$$g(\sqrt x)e^{i\la   \sqrt{x}} =\chi_1(x)\big(\ups_\la(x)+\rho_\la(x)\big)$$ where
\Be\label{vlarhola}
\begin{aligned}
\ups_\la(x)
&=\int \varsigma(\tfrac{2\pi \xi}{\la})\Psi_\la(\xi)e^{2\pi ix\xi} d\xi
\\
\widetilde \rho_\la(x)
&=\int \big(1-\varsigma(\tfrac{2\pi\xi}{\la})\big)
\Psi_\la(\xi)e^{2\pi ix\xi} d\xi
\end{aligned}
\Ee
so that $\widetilde \rho_\la$ is as in \eqref{rholambda}.

We first consider $\widetilde \rho_\la$ and verify that
the inequalities \eqref{rholaest} hold.
On the support of $1-\varsigma(2\pi\xi/\la)$ we have either $|2\pi\xi|\le
\la/8$
 or $|2\pi\xi|\ge 2\la$.
Clearly, on the support of $g$ we have
$|\partial_s (\la  s-2\pi \xi s^2)|\ge \la/2$ if $|2\pi\xi|\le
\la/8$ and
$|\partial_s (\la  s-2\pi\xi s^2)|\ge |2\pi\xi|/2$ if  $|2\pi\xi|\ge 2\la$.
Integration by  parts in \eqref{Psifirst}  yields
$$\big|\partial^{M_1}_\xi \partial^{M_2}_\la
\big[(1-\varsigma(2\pi \xi/\la))\Psi_\la(\xi)\big]\big| \le C_{M_1,M_2,K}
\|g\|_{C_K}(1+|\xi|+|\la|)^{-K} .
$$ Thus, if $N_1\le K-2$,
\begin{align*}
\Big|\Big(\frac{d}{dx}\Big)^{N_1}&\widetilde \rho_\la(x)\Big|= \Big|\int (2\pi\xi)^{N_1}
(1-\varsigma(2\pi \xi/\la))\Psi_\la(\xi) e^{2\pi ix\xi}d\xi\Big|
\\&\le C_{N_1,K} \|g\|_{C^K}\,\int\frac{(1+|\xi|)^{N_1}}{(1+|\xi|+|\la|)^{K}}
d\xi
\le C_{N_1,K}' \|g\|_{C^K} \la^{-K+N_1+1}.
\end{align*}
This yields \eqref{rholaest} for $N_2=0$, and the same argument
applies to the $\la$-derivatives.

It remains to represent the function $\la^{-1/2} \ups_\la$ as the
integral in \eqref{localmult}.
Let
\Be\label{wtg}\widetilde g(s) = 2sg(s)\,.\Ee
By a change of variable we may
write
\Be\label{Psisecond}
\Psi_\la(\xi)= e^{\frac{i\la^2}{8\pi\xi}}
 \int \widetilde g
  (y+\tfrac{\la}{4\pi\xi}) e^{-2\pi i\xi y^2} dy.
\Ee
We compute
from \eqref{vlarhola}, \eqref{Psisecond},
$$
\ups_\la(x)
= \la\int\varsigma(s) e^{i\frac{\la}{4s}+i \la sx} \la^{-1/2}a_\la(s) ds
$$
where
$$ a_\la(s) =(2\pi)^{-1}\sqrt \la \, \varsigma(s)\int \widetilde g
  (y+\tfrac{1}{2s}) e^{-i\la s y^2} dy\,,
$$
i.e. $a_\la$ is as in \eqref{alambda}.
In order to show the estimate  \eqref{alaest} observe
$$2\pi\partial_\la^{N_2} (\la^{-1/2} a_\la(s))=
\, \varsigma(s)\int \widetilde g
  (y+\tfrac{1}{2s})
(-isy^2)^{N_2} e^{-i\la s y^2} dy
$$ and then by the Leibniz rule
$\partial_s^{N_1} \partial_\la^{N_2}  [\la^{-1/2}a_\la(s)]
$ is a linear combination of  terms of
the form
\Be\label{typicalterm}
\Big(\frac{d}{ds}\Big)^{N_3}\big[\varsigma(s) s^{N_2}]
 \int y^{2N_2} (\la y^2)^{N_5}
\Big(\frac{d}{ds}\Big)^{N_4}
\big[ \widetilde g
  (y+\tfrac{1}{2s}) \big]
e^{i\la sy^2}
\, dy
\Ee
where and $N_3+N_4+N_5=N_1$.
By estimate \eqref{sumell} in Lemma \ref{stationaryphase}
we see that the  term
\eqref{typicalterm} is bounded (uniformly in
$s\in [1/9,3]$)  by a constant times
$$\la^{-N_2-\frac 12}  \big\|(\tfrac{d}{ds})^{N_4}[ \widetilde g
  (\cdot+\tfrac{1}{2s}) \big] \big\|_{C^{K-N_4}}
$$
provided that $N_2+N_5 < (K-N_4-1)/2$.
This  condition is satisfied
if $N_1+N_2<(K-1)/2$ and under this condition we get
$$
\sup_s|\partial_s^{N_1} \partial_\la^{N_2}  [\la^{-1/2}a_\la(s)]|
\lc \la^{-N_2-\frac 12} \|g\|_{C^K}.
$$
Now \eqref{alaest} is a straightforward consequence.
\end{proof}

\begin{proof}[Proof of Proposition \ref{subordop}]
The identity
\eqref{localmultop} is an immediate consequence of the
  spectral resolution $L=\int_{\bbR^+} xdE_x,$
Lemma \ref{subordlemma} (applied with $x/\la$ in place of $x$) and Fubini's theorem. Note that in
  view of the  symbol estimates \eqref{rholaest} any Schwartz norm of
  $\rho_\la(\la^{-2}\,\cdot)$ is $O(\la^{-N})$ for every  $N\in\NN.$
The statement
  on the operator norms of $\rho_\la(\la^{-2} L)$  follows then from  the known multiplier theorems
(such as the  original one by Hulanicki and Stein, see \cite{hulanicki}, \cite{FollSt}).
\end{proof}

Thus in order to get manageable formulas for our wave operators it
will be important to get
explicit formulas for the
Schr\"odinger group
$e^{isL}$, $ s\in \bbR$.

\section{Basic decompositions of the wave operator and  statements of refined results}
\label{dyadicdecsect}
We consider operators $a(\sqrt L)e^{i\sqrt L}$  where $a\in S^{(d-1)/2}$
(satisfying \eqref{symbols} with
$\gamma=\frac{d-1}{2}$).
We split off the part of the symbol supported near $0$.
Let $ \chi_0\in C^\infty_c(\bbR)$ be  supported in $[-1,1]$; then we observe
that the operator $\chi_0(\sqrt L) \exp (i\sqrt L)$
extends to a bounded operator on
 $L^p(G)$, for $1\le p\le \infty$.
To see this we decompose
$\chi_0(\sqrt{\tau})e^{i\sqrt\tau}
=\chi_0(\sqrt \tau)+\sum_{n=0}^\infty \alpha_n(\tau)$,
$\tau>0$,
where $$\alpha_n(\tau)=\chi_0(\sqrt \tau) (e^{i\sqrt \tau}-1)
( \zeta_0(2^{n-1}\tau)-\zeta_0(2^n\tau))$$
where $\zeta_0$ is as in \S\ref{cutoffsect}.
Clearly $\chi_0(\sqrt \cdot)\in C^\infty_0$.
Thus by
 Hulanicki's theorem \cite{hulanicki}
the convolution kernel of
$\chi_0(\sqrt L)$
is a Schwartz function and hence $\chi_0(\sqrt L)$ is bounded on $L^1(G)$.
Moreover the functions
$2^{n/2}\alpha_n(2^{-n}\cdot)$ belong to  a bounded set of Schwartz functions
supported in $[-2,2]$. By dilation invariance and  again
Hulanicki's theorem the convolution kernels of
$2^{n/2}\alpha_n(2^{-n}L)$  are Schwartz functions
and form a bounded subset of the Schwartz space $\cS(G)$.  Thus, by rescaling,
the operator $\alpha_n(L)$ is bounded on  $L^1(G)$ with operator norm
$O(2^{-n/2})$. We may  sum in $n$ and obtain the desired bounds for
$\chi_0(\sqrt{\tau})e^{i\sqrt\tau}$.

The above also implies that for any $\la$  the operator
$\chi(\la^{-1}\sqrt L) \exp (i\sqrt L)$  is bounded on $L^1$ (with a
polynomial and nonoptimal growth in  $\la$).
Thus, in what follows it suffices to consider  symbols $a\in S^{-(d-1)/2}$
with the property that $a(s)=0$ in a neighborhood of $0$. Then
\Be\label{dyadicdec}a(\sqrt L)e^{i\sqrt L}\,=\, \sum_{j>C} 2^{-j\frac{d-1}{2}}
g_j (\sqrt{2^{-2j} L}) e^{i\sqrt L},
\Ee
where the $g_j$ form a family of smooth functions supported in $(1/2,2)$ and bounded in the $C^\infty_0$ topology.  In many   calculations below when $j$
is fixed we shall also  use the parameter $\la$ for $2^j$.

Let $\chi_1$ be a smooth function such that
\begin{subequations}\label{chi1}
\begin{align} \label{chi1support}
&\supp(\chi_1) \subset (2^{-10}, 2^{10})\,,
\\
\label{chi1=1}
&\chi_1(s)=1 \text{ for $s\in (2^{-9}, 2^9)$.}
\end{align}
\end{subequations}
By Proposition \ref{subordop} and
Lemma \ref{subordlemma} we may thus write
\Be\label{adydecomposition}a(\sqrt L)e^{i\sqrt L}\,=\,
m_{\text{negl}}(L)+\sum_{j>100} 2^{-j\frac{d-1}{2}}
\chi_1(2^{-2j}L) m_{2^j}(L),
\Ee
where the ``negligible'' operator $m_{\text{negl}}(L)$ is a convolution with a Schwartz kernel,
\Be \label{mj} m_{\la}(\rho)= \sqrt{\la} \int e^{i\la/(4\tau )}
a_\la(\tau)
e^{i\tau \rho/\la}
d\tau, \quad \text{ with } \la =2^j,
\Ee
and the $a_\la$
form a family of smooth functions supported in $(1/16,4)$,  bounded in the $C^\infty_0$ topology.

We shall use  the formulas \eqref{gammatmu}, which give explicit expressions for the  partial Fourier transform in the central variables  of  the Schwartz kernel of
$e^{it L}.$  In undoing this partial Fourier transform, it
 will  be useful to
recall from \S3 that if  $\rho_1$ denotes  the spectral parameter for $L$
then the joint spectrum of the operators $L$ and $|U|$ is  contained
in the closure of \Be\label{jtspec}\{(\rho_1,\rho_2):\rho_2\ge 0, \, \rho_1
=(\tfrac{d_1}{2}+2q)\rho_2 \text{ for some nonnegative
integer $q$}\}\,.\Ee


As the phase in \eqref{gammatmu} exhibits periodic singularities
it  natural to introduce an equally spaced decomposition in the central
Fourier
variable (i.e., in the spectrum of the operator $|U|$).
Let $\eta_0$ be a $C^\infty$ function such that
\begin{subequations}
\begin{align}
&\supp (\eta_0)
\subset (-\tfrac {5\pi}8 ,
\tfrac {5\pi}8 )\,,
\label{suppeta0}
\\
\label{eta0equalto1}
&\eta_0(s)=1 \,\text{ for } s\in
(-\tfrac {3\pi}8, \tfrac{3\pi}{ 8})\,,
\\
\label{etapartitionof1}
&\sum_{k\in \Bbb Z} \eta_0(t-k\pi)=1 \text{  for  $t\in \bbR$.}
\end{align}
\end{subequations}
We decompose
\Be\label{kdecomposition}
\chi_1(\la^{-2} L)m_\la(L)= \sum_{k=0}^{\infty} \chi_1(\la^{-2} L)
T^k_\la,
\Ee
where
\Be\label {Tkla}
T^k_\la=
\la^{1/2}\int e^{i\la/(4\tau)}
 a_\la(\tau) \eta_0 (\tfrac {\tau}{ \la}|U|- k\pi)
 e^{i\tau L/\la}
d\tau\,.
\Ee
The  description \eqref{jtspec} of the  joint spectrum of $L$ and $|U|$
 gives a restriction on the
summation in $k$.
Namely the operator $\eta_0 (\tfrac {\tau}{ \la}|U|- k\pi)
\chi_1(\la^{-2} L)$  is identically zero unless there exist positive
$\rho_1$ and $\rho_2$  with $\rho_1\ge \rho_2 d_1/2$ such that
$\frac{\la^2} 5<\rho_1<5\la^2$ and
$(k\pi- \tfrac{5\pi}{8})\frac{\la}{\tau}
<\rho_2 <
(k\pi+ \frac{5\pi}{8})\frac{\la}{\tau}$  for some
$\tau \in (\tfrac 1{16},4)$.
A necessary condition for these two conditions to hold simultaneously is
of course  $\frac{d_1}{2} (k\pi-\frac 58 \pi)\frac{\la}{ 4} \le 5\la^2$ and since $d_1\ge 2$ and $\la\ge 1$ we see that the sum in \eqref{kdecomposition} extends only over $k$ with
\Be\label{kleeightla}0\le k<8\la.\Ee

We now
derive formulas for the
convolution kernels of  $T^{k}_\la$, which we denote
by
 $K^{k}_\la$.
The identity \eqref{gammatmu} first gives formulas for the partial Fourier transforms
$\cF_{\bbR^{d_2}}K^{k}_\la$. Applying the Fourier
inversion formula we get
\begin{multline}\label{Kklaexpression}
K^{k}_\la(x,u) \,=\, \la^{1/2} \int_{\bbR^{d_2}}
\int_\bbR e^{i\frac{\la}{4\tau}}
 a_\la(\tau) \eta_0 (2\pi|\mu|\tfrac {\tau}{ \la}- k\pi)\,\times
\\
\Big(\frac{|\mu|}{2\sin (2\pi|\mu|\tau/\la)}\Big)^{d_1/2}
e^{-i|x|^2\frac {\pi}{ 2} |\mu|\cot (2\pi  |\mu|\tau/\la)} d\tau
\, e^{2\pi i \inn{u}{ \mu}} d\mu\,.
\end{multline}

We note that the term
 $|\mu|\cot(2\pi t|\mu|)$ in \eqref{Kklaexpression}
is singular for $2t|\mu|\in \bbZ\setminus \{0\}$  and therefore we shall   treat the operator $T_\la^0$ separately from $T^k_\la$ for $k>0$.
We shall see that $T^0_\la$, and the operators $\sum_j \chi(2^{-2j}L) T^0_{2^j}$  can be handled using known results about Fourier integral operator, while the operators $T^k_{2^j}$ need a more careful treatment due to the singularities of the phase function.
We shall see that the decomposition into the operators $T^k_{2^j}$ encodes useful information on the singularities of the wave kernels.

In \S\ref{Tlaksection}, \S\ref{L1section}
we shall prove the following  $L^1$ estimates

\begin{theorem} \label{TjL1thm}
(i) For $\la\ge 2^{10}$
\Be\label{Tj0L1} \|T^0_\la\|_{L^1\to L^1} \lc \la^{(d-1)/2}\,.\Ee
(ii) For $\la \ge 2^{10}$, $k=1,2,\dots$,
\Be \label{TlakL1}
 \|T^k_\la\|_{L^1\to L^1} \lc k^{-\frac{d_1+1}{2}}
\la^{(d-1)/2}\,.
\Ee
\end{theorem}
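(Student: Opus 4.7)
My plan is to bound $\|T^k_\la\|_{L^1\to L^1}$ by $\|K^k_\la\|_{L^1(G)}$, where $K^k_\la$ is the convolution kernel given explicitly in \eqref{Kklaexpression}. The first step is to pass to polar coordinates $\mu=\sigma\omega$ with $\omega\in S^{d_2-1}$ and substitute $\sigma=(k/2+s)\la/\tau$, so that the cutoff becomes $\eta_0(2\pi s)$ with $|s|\le 5/16$ uniformly in $k$. Using $\sin(2\pi s+k\pi)=(-1)^k\sin(2\pi s)$ and $\cot(2\pi s+k\pi)=\cot(2\pi s)$, the kernel is rewritten as
\[
K^k_\la(x,u)=c\,\la^{(d+d_2+1)/2}\int\!\!\int\!\!\int b_k(s,\tau)\,e^{i\la A(s,\omega,x,u)/\tau}\,ds\,d\tau\,d\omega
\]
with amplitude $b_k$ of size $(k/2+s)^{(d_1+2d_2-2)/2}\tau^{-(d_1+2d_2)/2}|\sin 2\pi s|^{-d_1/2}a_\la(\tau)\eta_0(2\pi s)$ and phase
\[
A(s,\omega,x,u)=\tfrac14+2\pi(\tfrac k2+s)\inn{u}{\omega}-\tfrac\pi2|x|^2(\tfrac k2+s)\cot(2\pi s).
\]
The crucial structural observation is that $\la A/\tau$ depends on $\tau$ only through the factor $\tau^{-1}$; consequently integration by parts in $\tau$ (against $\frac{\tau^2}{-i\la A}\partial_\tau$) yields decay $(\la|A|)^{-N}$ for every $N$, so the effective $(x,u)$-support of $K^k_\la$ is the slab $\{|A|\lesssim 1/\la\}$.

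For $k=0$ the cutoff keeps $|s|\le 5/16$, so $|\sin 2\pi s|$ is bounded below and the amplitude is smooth; the phase $\la A/\tau$ is then that of a Fourier integral operator modelled on the Euclidean wave cone (after undoing the anisotropic scaling in the central variable $u$), and standard oscillatory-integral estimates of the Miyachi--Peral type give the $L^1\to L^1$ bound $\la^{(d-1)/2}$. For $k\ge 1$, I would combine (a) stationary phase in $\omega\in S^{d_2-1}$, whose critical points are $\omega=\pm u/|u|$, whose Hessian has size $\la(k/2+s)|u|/\tau$, and which therefore produces the gain $(k\la|u|)^{-(d_2-1)/2}$; (b) the $|A|\lesssim 1/\la$ localization from the $\tau$-integration by parts; and (c) a dyadic decomposition $|s|\sim 2^{-\ell}$, $\ell\ge 0$. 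At the $\omega$-critical point $A\approx 0$ reduces to $|x|^2(k/2+s)\cot(2\pi s)\approx (4\pi)^{-1}+4(k/2+s)|u|$, which together with the Fourier uncertainty $|u|\lesssim 1/(k\la)$ (natural since $|\mu|\approx k\la$) forces $|x|\sim\sqrt{s/k}$; on the dyadic piece $|s|\sim 2^{-\ell}$ this means $|x|\sim 2^{-\ell/2}/\sqrt{k}$.

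The main obstacle is that the amplitude factor $|\sin 2\pi s|^{-d_1/2}\sim 2^{\ell d_1/2}$ is non-integrable in $s$ as $s\to 0$ and must be compensated by the shrinkage of the $(x,u)$-support. Carrying out the power accounting, the amplitude $\la^{(d+d_2+1)/2}(k\la)^{(d_1+2d_2-2)/2}2^{\ell d_1/2}$ combined with the $\omega$-gain $(k\la)^{-(d_2-1)/2}$ (the pointwise factor $|u|^{-(d_2-1)/2}$ being integrable against the $u$-support of measure $\sim(k\la)^{-d_2}$ around a direction) and the $(x,u)$-support of measure roughly $(2^{-\ell}/k)^{d_1/2}(k\la)^{-d_2}\cdot\la^{-1}$ (the last factor from the $|A|\lesssim 1/\la$ slab) contracts, after summation over $\ell\ge 0$, to the claimed bound $k^{-(d_1+1)/2}\la^{(d-1)/2}$. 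A separate treatment of the Fourier-non-oscillatory regime $|u|\lesssim 1/(k\la)$ (where $\omega$-stationary phase fails but the $u$-volume is small) and careful control of the $\tau$-IBP remainder in the complement $|A|\gtrsim 1/\la$ complete the argument, and uniformity in $k\in\{1,\ldots,8\la\}$ is automatic once all constants are tracked; note $(d_1+1)/2>1$ (since $d_1\ge 2$) ensures that the subsequent sum in $k$ in \eqref{kdecomposition} will converge.
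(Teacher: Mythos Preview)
Your overall framework coincides with the paper's: bound $\|T^k_\la\|_{L^1\to L^1}$ by $\|K^k_\la\|_1$, pass to polar coordinates in $\mu$, decompose dyadically in the distance to the $\cot$-singularities (your $|s|\sim 2^{-\ell}$ is the paper's $|t-k\pi|\sim 2^{-l}$), use the Bessel asymptotics (your $\omega$-stationary phase) to split off the two oscillatory pieces, and exploit that the phase depends on $\tau$ only through $\tau^{-1}$ (this is exactly the paper's change of variable $s=1/(4\tau)$ followed by integration by parts in $s$). For $k=0$ your appeal to standard FIO bounds is also what the paper does: Section~\ref{FIO} verifies that the canonical relation is a local graph by explicitly computing the mixed Hessian determinant \eqref{ detHessian}, and then invokes \cite{SSS}.

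For $k\ge 1$, however, there is a genuine gap. The assertion that ``Fourier uncertainty $|u|\lesssim 1/(k\la)$ (natural since $|\mu|\approx k\la$)'' is wrong: uncertainty gives a \emph{lower} bound on the $u$-spread, not an upper bound, and in fact the kernel is concentrated near $4|u|\approx v(t)\approx 1/(k\pi)$ for $t\in J_{k,l}$ (see \eqref{rv(t)} and \eqref{rvsizebounds}). Your $u$-volume count $(k\la)^{-d_2}$ is therefore off by a factor of $\la^{d_2}$, and the subsequent power accounting does not close.

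The deeper issue is that you perform no oscillatory analysis in your $s$-variable. After $\omega$-stationary phase and $\tau$-integration by parts, the single constraint $|A|\lesssim 1/\la$ is codimension one in $(|x|,|u|)$ and does not by itself localize both $|x|$ and $|u|$; as $s$ ranges over $|s|\sim 2^{-\ell}$ the slab $\{|A|\lesssim 1/\la\}$ sweeps out a much larger set. The paper needs \emph{both} integration by parts in its $s=1/(4\tau)$ (decay in $\la|\phi|$) \emph{and} in $t$ (decay in $\la|\phi_t|$); these are Cases I and II of Proposition~\ref{vert-hor}. Because $\phi_{tt}$ can vanish (at $\tan t=t$), a straightforward $t$-stationary-phase argument fails, and the paper introduces a further subdivision of $J_{k,l}$ into intervals of length $\eps\sqrt{k/(2^l\la)}$ indexed by $b$. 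Only on each such piece does the geometry of the singular curve $(r(t),v(t))$ yield the correct localization $|r-r(b)|\lesssim(\la k2^l)^{-1/2}$, $|w(b,r,v)|\lesssim(\la k)^{-1}$ (with $w$ as in \eqref{wdefin}). This finer $b$-decomposition and the two-variable integration-by-parts scheme are exactly what distinguish the $k\ge 1$ analysis from the FIO case $k=0$, and they are missing from your outline.
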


Note that $d_1\ge 2$ and thus
the estimates \eqref{TlakL1} can be summed in $k$. Hence
Theorem \ref{main-theoremL1}
is an immediate consequence of Theorem \ref{TjL1thm}.

\subsection*{\it Dyadic decompositions}
For the Hardy space bounds we shall need to combine the dyadic pieces in $j$ and also refine the dyadic decomposition in \eqref{adydecomposition}.

Define
\begin{align}
\label{Vjdefinition}
V_j &= 2^{-j(d-1)/2} \chi_1(2^{-2j}L) T^0_{2^j}
\\
\label{Wjdefinition}
W_j&= 2^{-j(d-1)/2}\chi_1(2^{-2j} L) (m_{2^j}(L)- T^0_{2^j})
\end{align}

In section \S\ref{FIO} we shall use standard estimates on Fourier integral operators to prove
\begin{theorem}\label{Vtheorem}
The operator $\cV=\sum_{j>100}V_j$ extends to a bounded operator from
$h^1_\iso$ to $L^1$.
\end{theorem}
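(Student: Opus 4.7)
\textbf{Proof plan for Theorem \ref{Vtheorem}.}

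The plan is to identify $\cV=\sum_{j>100}V_j$ (up to a bounded remainder) as a classical Fourier integral operator of order $-(d-1)/2$ on the underlying manifold $\bbR^d$ associated to a local canonical graph, and then invoke the theorem of \cite{SSS} that such operators map the local Hardy space into $L^1$. The space $h^1_\iso(G)$ is, up to bi-Lipschitz translations built from the Heisenberg product, precisely the local Euclidean $h^1$ on $\bbR^d$, so the $h^1\to L^1$ bound for the FIO transfers to an $h^1_\iso(G)\to L^1(G)$ bound.

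First I would study the kernel $K^0_\la$ via formula \eqref{Kklaexpression} with $\eta_0(\tau|\mu|/\la)$ active only for $\tau|\mu|/\la<5/16$. On that set the quantities $|\mu|\cot(2\pi\tau|\mu|/\la)$ and $(\sin(2\pi\tau|\mu|/\la))^{-d_1/2}$ are smooth bounded functions of $\tau|\mu|/\la$ (with the former asymptotic to $\la/(2\pi\tau)$ at the origin), so after the change of variables $\mu=\la\xi$ the kernel $K^0_\la$ has the form of an oscillatory integral
\[
K^0_\la(x,u)=\la^{d_2}\int_{\bbR^{d_2}}\!\int e^{i\la \Psi(\tau,\xi;x,u)} b(\tau,\xi;x,u;\la)\,d\tau\,d\xi
\]
with
\[
\Psi(\tau,\xi;x,u)=\tfrac{1}{4\tau}-\tfrac{\pi}{2}|x|^2|\xi|\cot(2\pi\tau|\xi|)+2\pi\langle u,\xi\rangle,
\]
and $b$ a symbol of order $0$ in $\la$.

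Next I would apply stationary phase in $\tau$. On the support of $a_\la(\tau)$ the equation $\partial_\tau\Psi=0$ can be solved smoothly for $\tau=\tau_\ast(|x|,|\xi|)$, since the $k=0$ localization keeps $\tau|\xi|$ bounded away from the singularities of $\cot$; the second derivative $\partial_\tau^2\Psi$ is nonvanishing at the critical point. Stationary phase then gives, modulo a remainder handled by integration by parts and summed using the $\la^{-N}$ decay in the spirit of the $\rho_\la$ term in Proposition \ref{subordop},
\[
K^0_\la(x,u)\sim \la^{(d-1)/2}\int_{\bbR^{d_2}} e^{i\la\phi(\xi;x,u)} \tilde b(\xi;x,u;\la)\,d\xi,
\]
with $\phi(\xi;x,u)=\Psi(\tau_\ast,\xi;x,u)$ homogeneous of degree $1$ in $\xi$ after a standard rescaling, and $\tilde b\in S^0$. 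I would then verify that $\phi$ is a nondegenerate phase whose Lagrangian $C_\phi=\{(x,u,\phi_x,\phi_u;y,v,-\phi_y,-\phi_v)\}$ projects by a local diffeomorphism onto either factor — the local canonical graph condition. In the $k=0$ (``elliptic'') regime this is the same finite-propagation-speed canonical relation as for the Euclidean wave equation, because the sub-Riemannian effects are concentrated near the singularities of $\cot$ in the $k\ge 1$ part.

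Having done this, the sum $\cV=\sum_{j>100}V_j$ becomes, up to a bounded residue, a single Fourier integral operator of order $-(d-1)/2$ on $\bbR^d$ associated to a local canonical graph: the dyadic pieces $V_j$ are the usual Littlewood–Paley decomposition of the FIO symbol along $|\xi|\sim 2^j$. The Seeger–Sogge–Stein theorem \cite{SSS} then gives $\|\cV b\|_{L^1(\bbR^d)}\lesssim 1$ uniformly for each Euclidean $h^1$ atom. Since a $(P,r)$ atom for $h^1_\iso(G)$ is, after left translation by $P^{-1}$ in the group sense (an isometry of $L^1(G)$ that preserves the FIO structure), a standard Euclidean atom supported in the Euclidean ball of radius $r\le 1$ at the origin, summing over the atomic decomposition of $f\in h^1_\iso(G)$ yields $\|\cV f\|_{L^1(G)}\lesssim \|f\|_{h^1_\iso(G)}$.

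The main obstacle is the geometric verification in the second step: showing that the reduced phase $\phi$ is nondegenerate and that its Lagrangian is a canonical graph. This requires a careful computation of the mixed Hessian $\partial_\xi\partial_{(x,u)}\phi$ at the critical point $\tau=\tau_\ast$; the $k=0$ cutoff is exactly what prevents cotangent blowups and keeps one in the classical Fourier integral regime. A secondary technical point is controlling the remainders from stationary phase uniformly in $\la=2^j$ and summing them in $j$, which is handled by the symbol bounds \eqref{alaest} on $a_\la$ together with standard oscillatory integral estimates already established in Lemma \ref{stationaryphase}.
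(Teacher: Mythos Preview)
Your overall strategy---localize, express $K^0_\la$ as an oscillatory integral, identify $\sum_j V_j$ with a classical Fourier integral operator of order $-(d-1)/2$ associated to a local canonical graph, and invoke \cite{SSS}---is exactly the paper's route in \S\ref{FIO}. The paper differs from you in one technical choice: rather than eliminating $\tau$ by stationary phase, it changes variables to $(\omega,\sigma)=(\tfrac{\pi}{2}\mu,\tfrac{\la}{4\tau})$, after which the phase becomes $\Psi(x,u,\omega,\sigma)=\sigma\big(1-|x|^2 g(|\omega|/\sigma)\big)+\langle 4u,\omega\rangle$ with $g(\tau)=\tau\cot\tau$, manifestly homogeneous of degree~$1$ in the $(d_2+1)$ frequency variables $(\omega,\sigma)$. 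This lands one directly in the standard FIO framework with no stationary-phase reduction, and the amplitudes $\beta_{2^j}$ (and their sum over $j$) are honest classical symbols of order $(d_1-1)/2$. Your stationary-phase route to a phase in $d_2$ variables is in principle equivalent---it is the Beals alternative mentioned in the remark at the end of \S\ref{FIO}---but it is clumsier: the critical point $\tau_*$ only exists for $|x|$ in a restricted range, the boundary regions must be handled separately, and your power count $K^0_\la\sim\la^{(d-1)/2}\int\ldots$ is off (the correct exponent after your scaling is $d_2-\tfrac12$, not $(d-1)/2$).

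There is, however, a genuine gap. Your assertion that in the $k=0$ regime the canonical relation is ``the same finite-propagation-speed canonical relation as for the Euclidean wave equation'' is false. The phase already involves $g(|\omega|/\sigma)=(|\omega|/\sigma)\cot(|\omega|/\sigma)$, which is nonconstant even for small argument; the resulting Lagrangian is \emph{not} the Euclidean light-cone relation, and the sub-Riemannian geometry is present throughout the $k=0$ piece. Consequently the canonical-graph verification is not automatic and is in fact the substance of the proof. The paper carries it out by writing the full phase $\Phi(x,u,y,v,\omega,\sigma)=\Psi\big(x-y,u-v+\tfrac12\langle\vec Jx,y\rangle,\omega,\sigma\big)$ and computing the determinant of the $(2d)\times(2d)$ mixed Hessian \eqref{PhimixedHess} explicitly: using Lemma~\ref{skewsymnew} and the block-determinant identity \eqref{determinantformula} one obtains
\[
D=2^{d_1+4d_2+1}\,\sigma^{d_1-1}\Big(\frac{|\omega|/\sigma}{\sin(|\omega|/\sigma)}\Big)^{d_1+2},
\]
which is nonvanishing on the cone $|\omega|/\sigma<\pi$ cut out by $\eta_0$. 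An equivalent computation for your reduced phase (or an appeal to equivalence of phase functions followed by this one) is what your plan is missing; without it the invocation of \cite{SSS} is unjustified.
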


We further decompose the pieces $W_j$ in \eqref{Wjdefinition} and let
\Be\label{Wjndefinition}
\begin{aligned}
W_{j,0} &= \zeta_0(2^{-j}|U|)\,
W_j
\\
W_{j,n} &=  \zeta_1(2^{-j-n}|U|) W_j\,;
\end{aligned}
\Ee
here again  $\zeta_0$, $\zeta_1$ as in  \S\ref{cutoffsect}, i.e.
$\zeta_0$ supported
in $(-1,1)$, $\zeta_1$ supported in $\pm(1/2,2)$ so that
$\zeta_0+\sum_j \zeta_1(2^{1-j}\cdot)\equiv 1.$

By the description \eqref{jtspec} of the joint spectrum of $L$ and $|U|$
and the support property \eqref{chi1support}
we also have
$$\chi_1(2^{-2j} L) \zeta_1(2^{-j-n}|U|)= 0  \text{ when } 2^{2j+10} \le
2^{j+n-1}\,,
$$
i.e when $j\le n-11$ and thus
\Be \label{vanishingterms}  W_{j,n}=0 \text{ when $n\ge j+11$ }.\Ee

Observe from \eqref{jtspec}, as in the discussion following \eqref{Tkla} that,
for $k=1,2,\dots$,
$$\zeta_0(2^{-j}\rho_2) \eta_0(\frac{\tau}{2^j}\rho_2-k\pi)= 0
\text {
for  $\tau \in (\frac 1{16},4)$, $\rho_2\ge 0$, } \text{ if } 2^j\le (k-\tfrac 58)\pi 2^{j}/4\,,
$$
and \begin{multline*}
\zeta_1(2^{j-n}\rho_2)
 \eta_0(\frac{\tau}{2^j}\rho_2-k\pi)= 0 \text {
for  $\tau \in (\frac 1{16},4)$, $\rho_2\ge 0$, }
\\ \text{ if }  2^{j+n+1} \le 2^j(k-\tfrac 58)\pi/4 \text { or } 16\cdot 2^j (k+\tfrac 58)\pi \le 2^{j+n-1}\,.
\end{multline*}
Thus  we have
for $k=1,2,\dots$,
\begin{align*}
&\zeta_0(2^{-j} |U|) T^k_{2^j} = 0 \text{ when } k\ge 2\,,
\\
&\zeta_1(2^{-j-n} |U|) T^k_{2^j} = 0 \text{ when } k\notin
[2^{n-8}, 2^{n+2}]\,.
\end{align*}
Let
\Be\label{jndef}
\cJ_n=\begin{cases} \{1\}, &n=0\,,\\
\{k:  2^{n-8}\le k\le 2^{n+2}\}, &n\ge 1\,.
\end{cases}
\Ee
Then by \eqref{kdecomposition} we have
$m_{2^j}(L)- T^0_{2^j}=\sum_{k=1}^\infty T^k_{2^j}$
and therefore we get
\begin{subequations}
\begin{align} \label{Wj0k}
W_{j,0} &=
2^{-j(d-1)/2} \chi_1(2^{-2j}L) \zeta_0(2^{-j}|U|)\sum_{k\in \cJ_0}
T^k_{2^j}\,,
\\
\label{Wjnk}
W_{j,n} &= 2^{-j(d-1)/2} \chi_1(2^{-2j}L) \zeta_1(2^{-j-n}|U|)
\sum_{k\in \cJ_n}T^k_{2^j}\,.
\end{align}
\end{subequations}

Observe that Theorem \ref{TjL1thm}
implies
\begin{equation}\label{Wjnest}
\|W_{j,n}\|_{L^1\to L^1} \lc 2^{-n(d_1-1)/2}
\end{equation}
uniformly in $j$.

Define
for $n=0,1,2,\dots$
\Be \label{Wnopdefinition}
\cW_n =\sum_{j>100} W_{j,n}
\Ee
Theorem \ref{h1thm} will then be a consequence of Theorem \ref{Vtheorem}
and
\begin{theorem} \label{refinedh1thm}
The operators $\cV$ and $\cW_n$ are bounded from $h^1_\iso$ to $L^1$; moreover
\Be\label{Wnhardybd}
\|\cW_n\|_{h^1_\iso\to L^1}\lc (1+n)2^{-n(d_1-1)/2}
\Ee
\end{theorem}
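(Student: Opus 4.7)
The bound on $\cV$ is already furnished by Theorem \ref{Vtheorem}, so the substantive task is to establish \eqref{Wnhardybd}. By the atomic decomposition of $h^1_\iso$ and the left-invariance of $\cW_n$, it suffices to prove that
$$\|\cW_n b\|_{L^1(G)} \lc (1+n)\,2^{-n(d_1-1)/2}$$
uniformly over all atoms $b$ supported in $Q_{r,E}(0)$ with $\|b\|_2 \le r^{-d/2}$ (and $\int b = 0$ when $r \le 1/2$). Writing $\cW_n b = \sum_{j>100} W_{j,n} b$, I would split the sum at the critical scale $2^j \sim r^{-1}$ and treat the two regimes by different estimates.

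For $2^j \gc r^{-1}$ (high frequency relative to the atom), I would combine the $L^2$ bound $\|W_{j,n}\|_{L^2\to L^2} \lc 2^{-j(d-1)/2}$ with finite propagation speed for the wave equation on $G$, which confines the Koranyi support of the kernel of $W_{j,n}$ (and hence the isotropic support of $W_{j,n} b$) to a fixed bounded set. Cauchy--Schwarz then gives $\|W_{j,n} b\|_1 \lc 2^{-j(d-1)/2} r^{-d/2}$; taking the minimum with the uniform bound \eqref{Wjnest} where needed and summing over $j$ in this range yields an acceptable contribution. For $2^j < r^{-1}$ (low frequency), I would exploit the moment condition by writing
$$W_{j,n} b(g) = \int_G b(h)\,\bigl[K_{j,n}(h^{-1}g) - K_{j,n}(g)\bigr]\,dh$$
and then bounding the kernel difference via smoothness estimates on $K_{j,n}$: a horizontal derivative $X_i$ costs $\lc 2^j$, whereas a central derivative costs $\lc 2^{j+n}$. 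Together with $|h|_E \le r$, $\|b\|_1 \lc 1$, and the $L^1$ estimate \eqref{Wjnest}, this produces an extra factor $r\,2^{j+n}$ per term, which is summable in $j$. Joining the two regimes at $2^j \sim r^{-1}$ accounts for the logarithmic $(1+n)$ loss.

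The principal obstacle is establishing the requisite size and smoothness estimates for the kernels $K_{j,n}$. These rely on stationary-phase analysis of the oscillatory integral representation \eqref{Kklaexpression}, and in particular on controlling the phase factor $|\mu|\cot(2\pi|\mu|\tau/\la)$ near the singular set where $2|\mu|\tau/\la\in\bbZ\setminus\{0\}$. The dyadic decomposition in $n$, through the cutoffs $\eta_0(2\pi|\mu|\tau/\la - k\pi)$ and $\zeta_1(2^{-j-n}|U|)$, is designed precisely so that on the support of the integrand $|\mu|\tau/\la$ stays in a bounded neighborhood of a single integer $k\in\cJ_n$ and uniformly away from the adjacent singularities, which is what makes sharp stationary-phase estimates possible. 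Turning these estimates into the pointwise size and smoothness bounds on $K_{j,n}$ and its $X_i$- and $U$-derivatives used above is the technical core of the argument; a subtler point is to ensure that the high/low-frequency balance genuinely loses only the factor $(1+n)$ rather than a stray $\log r^{-1}$, which requires using the simultaneous $L$- and $|U|$-localization of $W_{j,n}$ to improve the bare $L^2$ estimate slightly in the transitional range $2^j \approx r^{-1}$.
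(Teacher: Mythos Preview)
Your low-frequency argument (using the atom's cancellation against kernel smoothness) is in the right spirit and matches the paper's Lemma~\ref{Hjnatomlem}, which gives $\|f_\rho*H_{j,n}\|_1\lc 2^{j+n}\rho$ and then feeds this into the $L^1$ bounds \eqref{Tlaklbounds}.

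The high-frequency argument, however, has a genuine gap that your final paragraph does not close. Finite propagation speed only confines $W_{j,n}b$ to a set of measure $O(1)$, so Cauchy--Schwarz gives $\|W_{j,n}b\|_1\lc 2^{-j(d-1)/2}r^{-d/2}$; summing this over $2^j\ge r^{-1}$ yields $\sim r^{-1/2}$, and interpolating with \eqref{Wjnest} still leaves a $\log r^{-1}$ loss, as you acknowledge. Your proposed remedy --- improving the $L^2$ bound via the joint $L$/$|U|$-localization --- cannot help: the spectral cutoff is already as sharp as it can be, and $\|W_{j,n}\|_{L^2\to L^2}\approx 2^{-j(d-1)/2}$ is attained. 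The problem is not the $L^2$ estimate but the crude support bound.

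What the paper actually does for the range $2^j\rho>2^{10n}$ is geometric and substantially deeper. It constructs an exceptional set $\cN_{n,\rho}$ --- a tubular neighborhood of the singular-support curve $(r(t),v(t))$ of width $\sim\sqrt\rho$ in $r$ and $\sim\rho$ in the transversal variable $w(\beta,r,v)$ --- whose measure is $\lc\rho$ (not $O(1)$). On $\cN_{n,\rho}$, Cauchy--Schwarz applied to the full tail $\sum_{2^j\rho>2^{10n}}W_{j,n}$ gives the bound $\rho^{1/2}\cdot(2^{-10n}\rho)^{(d-1)/2}\rho^{-d/2}\lc 2^{-5(d-1)n}$. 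Off $\cN_{n,\rho}$, the refined decay estimates of Proposition~\ref{vert-hor} (the decomposition into pieces $A^{k,l}_{\la,b}$ localized in $t$-intervals of length $\sim\sqrt{k/(2^l\la)}$, together with the claim \eqref{geomclaim} relating $\cN_{n,\rho}$ to the sets $\cP_m$) show that the kernel contribution is $O_N((2^{j-n}\rho)^{-N})$. The $(1+n)$ factor in \eqref{Wnhardybd} then comes cleanly from a \emph{three}-way split: the middle range $2^{-10n}\le 2^j\rho\le 2^{10n}$ has $O(n)$ terms, each bounded by \eqref{Wjnest}. The identification of $\cN_{n,\rho}$ and the off-diagonal decay from Proposition~\ref{vert-hor} are the missing ideas in your outline.
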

The proofs will be given in \S\ref{FIO} and  \S\ref{hardyspaceestimates}.

\section{Fourier integral estimates}
\label{FIO}
In this section we shall reduce the proof of the estimates for $T^0_\la$ and $\cV$ in Theorems \ref{TjL1thm} and \ref{refinedh1thm} to standard bounds for Fourier integral operators in \cite{SSS} or \cite{beals}.

We will prove  a preliminary lemma that allows us to add or suppress
$\chi_1(\la^{-2} L)$ from the definition  of $T_\la^0$.

\begin{lemma} \label{Tla0error}
For $\la>2^{10} $ we have
$$\|T^0_\la -\chi_1(\la^{-2}L) T^0_\la\|_{L^1\to L^1}\lc C_N \la^{-N}
$$ for any $N$.
\end{lemma}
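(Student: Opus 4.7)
The plan is to realize $(I-\chi_1(\la^{-2}L))T^0_\la$ as a joint spectral multiplier of $L$ and $U=(-iU_1,\dots,-iU_{d_2})$ whose symbol is a Schwartz function on $\bbR\times\bbR^{d_2}$ with all seminorms $O_N(\la^{-N})$, and then to invoke the joint multiplier transference of Müller--Ricci--Stein (\cite{MRS2}) recalled after \eqref{Kphimu}: if $\vphi\in\cS(\bbR\times\bbR^{d_2})$, then $\vphi(L,U)\delta\in\cS(G)$ with continuous dependence. Since Schwartz convolution kernels lie in $L^1(G)$ with norm controlled by finitely many seminorms, this immediately yields the stated bound.

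Assuming, as we may, that $\eta_0$ from \eqref{suppeta0}--\eqref{etapartitionof1} is even, set $\Phi(\tau;x,\la):=\la/(4\tau)+\tau x/\la$ and
\[
h_\la(x,y)\,=\,(1-\chi_1(\la^{-2}x))\,\la^{1/2}\!\int e^{i\Phi(\tau;x,\la)}\,a_\la(\tau)\,\eta_0\!\bigl(\tfrac{\tau y}{\la}\bigr)\,d\tau,
\]
so that $\widetilde h_\la(x,\vec y):=h_\la(x,|\vec y|)$ extends smoothly to $\bbR\times\bbR^{d_2}$. In view of the definition \eqref{Tkla} of $T^0_\la$ and the strong commutativity of $L$ and $-iU_1,\dots,-iU_{d_2}$ discussed in \S\ref{SublGrouptr}, one has $(I-\chi_1(\la^{-2}L))T^0_\la=\widetilde h_\la(L,U)$.

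The heart of the argument is the non-stationary behavior of $\Phi$ off $\supp\chi_1(\la^{-2}\cdot)$: the equation $\Phi'(\tau;x,\la)=x/\la-\la/(4\tau^2)=0$ has a unique root $\tau^\ast=\la/(2\sqrt x)$, which lies in $[1/16,4]$ only when $x/\la^2\in[1/64,64]$. For $x\in\supp(1-\chi_1(\la^{-2}\cdot))$ (so $x/\la^2\le 2^{-9}$ or $x/\la^2\ge 2^9$) and $\tau\in\supp a_\la\subset[1/16,4]$, a direct estimate gives
\[
|\Phi'(\tau;x,\la)|\,\gc\,\max(\la,\,x/\la),\qquad |\Phi^{(j)}(\tau;x,\la)|\,\lc\,\la\quad(j\ge 2).
\]
Using $\tau/\la\lc 1$ to bound all derivatives of $\eta_0(\tau y/\la)$ in $\tau,y$ uniformly, together with the symbol estimates \eqref{alaest}, $N$-fold integration by parts in $\tau$ yields
\[
|\partial_x^\alpha\partial_y^\beta h_\la(x,y)|\,\le\,C_{\alpha,\beta,N}\,\la^{1/2-N}\,(1+|x|/\la^2)^{-N}
\]
for all $\alpha,\beta,N$. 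Because $\eta_0(\tau y/\la)$ confines $y$ to $|y|\lc\la$, splitting the ranges $|x|\le\la^2$ and $|x|\ge\la^2$ turns these bounds into Schwartz estimates $C_{K,M}\la^{-K}(1+|x|+|\vec y|)^{-M}$ for arbitrary $K,M$; hence $\widetilde h_\la\in\cS(\bbR\times\bbR^{d_2})$ with all seminorms $O_N(\la^{-N})$. The Müller--Ricci--Stein multiplier theorem then furnishes a kernel $\widetilde h_\la(L,U)\delta\in\cS(G)$ with $\|\widetilde h_\la(L,U)\delta\|_{L^1(G)}\lc_N \la^{-N}$, which is the claim.

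The main technical point is the \emph{simultaneous} lower bound $|\Phi'|\gc\max(\la,|x|/\la)$ off the support of $\chi_1(\la^{-2}\cdot)$: the first factor produces rapid decay in $\la$, while the second produces the polynomial decay in $x$ needed to control the Schwartz seminorms in the unbounded regime $x\gtrsim\la^2$. Only because both improvements come together can one apply the joint multiplier theorem, and it is this double role of non-stationary phase that I expect to require the most care in the write-up.
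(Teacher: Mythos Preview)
Your approach is essentially the same as the paper's: both realize the operator as a joint spectral multiplier $b_\la(L,|U|)$, exploit the non-stationary phase $|\Phi'_\tau|\gc\max(\la,\rho_1/\la)$ on $\supp(1-\chi_1(\la^{-2}\rho_1))$, integrate by parts in $\tau$, and then invoke the Schwartz-kernel result from \cite{MRS2}. Your write-up is in fact slightly more careful than the paper's on one point: the paper speaks of the ``compact support of $b_\la(\la^2\cdot,\la\cdot)$'', which is accurate in the $\rho_2$-variable but not in $\rho_1$; you correctly handle the unbounded $\rho_1$-regime by using the $|\Phi'|\gc \rho_1/\la$ bound to produce the needed polynomial decay.
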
.

\begin{proof}
The operator $T^0_\la -\chi_1(\la^{-2}L) T^0_\la$ can be written as
$b_\la(|L|, |U|)$ where
$$b_\la(\rho_1,\rho_2)=
\la^{1/2} (1-\chi_1(\la^{-2}\rho_1))
\la^{1/2} \int a_\la(\tau) e^{i\varphi(\tau,\rho_1,\la)}
\eta_0(\tau\rho_2/\la) d\tau
$$
with $$\varphi(\tau,\rho_1,\la) = \frac{\la}{4\tau}+\frac{\tau\rho_1}{\la}\,.$$
Only the values of $\rho_1\le  \la^{2} 2^{-9}$ and $\rho_1\ge 2^{9}\la^2$ are relevant.
Now $$\frac{\partial\varphi}{\partial \tau}= -\frac{\la}{4\tau^2}+\frac{ \rho_1}\la$$ and $(\partial/\partial\tau)^n \varphi= c_n \la\tau^{-n-1}$ for $n\ge 2$.
Note that for $\rho_1\ge 2^9 \la^2$ we have
 $|\varphi'_\tau| \ge \rho_1/\la - (16^2/4)\la \ge
 \rho_1\la^{-1} (1- 2^{-9}2^6)\ge \rho_1/(2\la)$.
 Similarly for
 $\rho_1\le 2^{-9} \la^2$ we have
 $|\varphi'_\tau| \ge \la/16- 16 \cdot 2^{-9}\la \ge 2^{-5}\la$.
 Use integrations by parts to conclude
 that
 $$
\Big| \frac{\partial^{n_1+n_2}[b_\la(\la^2 \cdot,\la \cdot )]
}{(\partial\rho_1)^{n_1}(\partial\rho_2)^{n_2}}
(\rho_1,\rho_2)
\Big|  \le C_{n_1,n_2,N}\la^{-N}
 $$
 and in view of the compact support of
$ b_\la(\la^2 \rho_1,\la \rho_2)$ the assertion can be deduced from
 a result in  \cite{MRS2}
 (or alternatively from Hulanicki's result \cite{hulanicki} and a Fourier expansion in $\rho_2$).
\end{proof}

\subsection*{\it The convolution kernel for $T_\la^{0}$} It  is given by
\begin{multline*}
K^{0}_\la(x,u) \,=\, \la^{1/2} \int_{\bbR^{d_2}}
\int_\bbR e^{i\frac{\la}{4s }}
 a_\la(s ) \eta_{0} (2\pi|\mu|\tfrac {s }{ \la})\,\times
\\
\Big(\frac{|\mu|}{2\sin (2\pi|\mu|s /\la)}\Big)^{d_1/2}
e^{-i|x|^2\frac {\pi}{ 2} |\mu|\cot (2\pi  |\mu|s /\la)} ds
\, e^{2\pi i \inn{u}{ \mu}} d\mu\,.
\end{multline*}

We introduce frequency variables   $\theta= (\om, \sigma)$ on the cone
\Be\label{Gammacone}\Gamma_\delta= \{\theta=(\om, \sigma)\in \bbR^{d_2}\times \bbR: \, |\om|\le ( \pi -\delta)\sigma, \,\,\sigma>0\},\Ee
Set
$$\om=\frac{\pi \mu}2, \quad \sigma= \frac{\la}{4s }.$$
Note that $\sigma \approx \la$ for $s   \in \supp(a_\la)$.
We note that we will consider the case $\delta=\pi/4$ in view of the support of $\eta_0$ but any choice of $\delta\in (0,\pi/4)$ is permissible with some constants below depending on $\delta$.

If we set \Be\label{gdef}
g(\tau):= \tau \cot\tau,
\Ee
the above integral becomes
\Be\label{oscillintK00}
K^{0}_\la(x,u) \,=\, \iint e^{i\Psi(x,u,\om,\sigma)} \beta_\la(\om,\sigma) d\om\,d\sigma
\Ee
with
$$\Psi(x,u,\om,\sigma)=\sigma \big(1-|x|^2 g(|\om|/\sigma )\big) +\inn {4u}\om$$
and
$$
\beta_\la(\om,\sigma)=
4^{-1}\Big(\frac 2\pi\Big)^{\frac{d_1}{2}+d_2}
\la^{3/2}
\sigma^{\frac {d_1}{2}-2}
a_\la (\tfrac{\la}{4\sigma}) \eta_{0} (\tfrac{|\om|}{|\sigma})\Big(\frac{\tfrac{|\om|}{\sigma|}}
{2\sin(\tfrac{|\om|}{\sigma})}\Big)^{d_1/2} \,.
$$
The $\beta_{\la}$ are  symbols of order $\frac{d_1-1}{2}$
uniformly in $\la$, and  supported in $\Gamma$. The same applies to
 $\sum_{k>10} \beta_{2^k}$.

We will need formulas for the derivatives of $\Psi$ with respect to the frequency variables
$\theta=(\om,\sigma)$:
\Be
\label{Psinuder}
\begin{aligned}
&\frac{\partial \Psi}{\partial \om_i} = 4u_i -|x|^2  \frac{\om_i}{\sigma} \frac{g'(\tfrac{|\om|}{\sigma}}{\tfrac{|\om|}{\sigma}}
\\
&\frac{\partial \Psi}{\partial \sigma} =1-|x|^2\big( g(\tfrac{|\om|}{\sigma}) -  \tfrac{|\om|}{\sigma} g'(\tfrac{|\om|}{\sigma})\big)
\end{aligned}
\Ee
Now $g$ is analytic for $|\tau|<2\pi$ and we have
\begin{subequations}
\begin{align}
\label{gfirstderiv}
g'(\tau)&= \frac{\sin(2\tau)-2\tau}{2\sin^2\tau}
\\
\label{gfsecderiv}
g''(\tau)&=\frac{2(\tau \cos \tau - \sin \tau)}{\sin^3 \tau}
\end{align}
\end{subequations}
Observe that $$g'(\tau)<0 \text { and } g''(\tau)<0 \text{  for $0<\tau<\pi$.}$$
Moreover as $\tau \to 0$,
$$g(\tau)=1-\tau^2/3+O(\tau^4)$$
and hence $g'(0)=0$ and $g''(0)=-2/3$.
The even expression $$g(\tau)-\tau g'(\tau)= 1+ \int_0^\tau (-sg''(s)) ds $$ will frequently occur; from the above we get
\Be\label{g-tg'}\begin{aligned}
g(\tau)-\tau g'(\tau)\ge  1,  \text{ for } 0\le |\tau|<\pi\,,
\\
|g(\tau)-\tau g'(\tau)|\le  10,  \text{ for } 0\le |\tau|<3\pi/4\,.
\end{aligned}
\Ee

\begin{lemma} \label{decaylemmalargexu} We have
\Be\label{largexu}|K_{\la}^{0}(x,u)| \lc
\la^{\frac{d_1+2d_2+1}{2}-N}
(|x|^2+|u|)^{-N},
\text{ $|x|^2+4|u|>2$.}
\Ee
and
\Be\label{xsmall}|K_{\la}^{0}(x,u)| \lc \la^{\frac{d_1+2d_2+1}{2}-N}
(1+|u|)^{-N} , \text{ $|x|^2\le 1/20.$}\Ee
\end{lemma}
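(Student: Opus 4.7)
The plan is to carry out $N$-fold integration by parts in $\theta=(\om,\sigma)$ in the oscillatory integral representation \eqref{oscillintK00}. Since $\beta_\la$ is a symbol of order $(d_1-1)/2$ with $|\partial_\theta^\alpha\beta_\la|\lesssim \la^{(d_1-1)/2-|\alpha|}$, supported where $\sigma\approx\la$ and $|\om|\lesssim\la$ (total $\theta$-volume $\lesssim\la^{d_2+1}$), the trivial bound is $|K_\la^0|\lesssim\la^{(d_1+2d_2+1)/2}$. Writing $B=1+|u|$ in \eqref{xsmall} and $B=|x|^2+|u|$ in \eqref{largexu}, the problem reduces to establishing the pointwise lower bound $|\nabla_\theta\Psi(x,u,\om,\sigma)|\gtrsim B$ throughout $\supp\beta_\la$. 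Indeed, the degree-one homogeneity of $\Psi$ in $\theta$ (so $|\partial_\theta^\alpha\Psi|\lesssim\la^{1-|\alpha|}(1+|x|^2+|u|)$) combined with the symbol bounds on $\beta_\la$ lets one verify inductively that the standard operator $L^*\phi=-\operatorname{div}_\theta(\phi\,\nabla_\theta\Psi/|\nabla_\theta\Psi|^2)$ satisfies $|(L^*)^N\beta_\la|\lesssim(\la B)^{-N}\la^{(d_1-1)/2}$; integrating in $\theta$ then gives the asserted bound $\lesssim\la^{(d_1+2d_2+1)/2-N}B^{-N}$.

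For \eqref{xsmall}, I would combine two facts. First, \eqref{g-tg'} and $|x|^2\le 1/20$ yield $|\partial_\sigma\Psi|=|1-|x|^2(g-\tau g')(\tau)|\ge 1-10/20=1/2$. Second, with $C_0:=\max_{|\tau|\le 5\pi/8}|g'(\tau)|$, the triangle inequality gives $|\nabla_\om\Psi|\ge 4|u|-C_0|x|^2\ge 4|u|-C_0/20$. For $|u|$ in a bounded range (say $|u|\le(1+C_0/20)/3$) the factor $1+|u|$ is bounded and the $\sigma$-estimate alone gives $|\nabla_\theta\Psi|\ge 1/2\gtrsim 1+|u|$; for larger $|u|$ the $\om$-estimate gives $|\nabla_\om\Psi|\ge 1+|u|$. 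Together this yields the required lower bound uniformly in $(\om,\sigma)\in\supp\beta_\la$.

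For \eqref{largexu} I plan a three-case split. If $|u|\ge 2C_0|x|^2$, then $|\nabla_\om\Psi|\ge 4|u|-C_0|x|^2\ge 7|u|/2$ and $|x|^2+|u|\lesssim|u|$; if $|u|<2C_0|x|^2$ and $|x|^2\ge 2$, then by \eqref{g-tg'} one has $|\partial_\sigma\Psi|\ge |x|^2(g-\tau g')-1\ge|x|^2/2$, and $|x|^2+|u|\lesssim|x|^2$. The remaining case $|u|<2C_0|x|^2$ with $|x|^2<2$ confines $(x,u)$ to a compact set on which $|x|^2+|u|$ is bounded above and below (the lower bound $|x|^2+|u|>1/2$ follows from $|x|^2+4|u|>2$). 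I expect this last case to be the main obstacle, since neither $\nabla_\om\Psi$ nor $\partial_\sigma\Psi$ is obviously large. To handle it I would solve the critical-point system arising from \eqref{Psinuder}: setting $\nabla_\om\Psi=0=\partial_\sigma\Psi$ forces $|x|^2=\sin^2\tau/\tau^2$ and $4|u|=(2\tau-\sin 2\tau)/(2\tau^2)$, and then a direct calculation shows that on $|\tau|\le 5\pi/8$ the function $\tau\mapsto\sin^2\tau/\tau^2+(2\tau-\sin 2\tau)/(2\tau^2)$ attains a maximum strictly less than $2$. Thus the hypothesis $|x|^2+4|u|>2$ keeps $(x,u)$ away from the image of the critical set; compactness over the bounded parameter region (together with $\tau\in[-5\pi/8,5\pi/8]$ and the direction $\om/|\om|$) then gives $|\nabla_\theta\Psi|\ge c>0$, which in this bounded regime is equivalent to $|\nabla_\theta\Psi|\gtrsim|x|^2+|u|$. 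With $|\nabla_\theta\Psi|\gtrsim B$ verified in all cases, $N$-fold integration by parts as in the first paragraph completes the proof.
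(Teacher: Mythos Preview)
Your proof is correct and follows the same integration-by-parts strategy as the paper: establish a lower bound on $|\nabla_\theta\Psi|$ on $\supp\beta_\la$ and then iterate the operator $L^*$. In fact your three-case split for \eqref{largexu} is more thorough than the paper's. The paper only treats the regions $|x|\ge\sqrt 2$ (via $\partial_\sigma\Psi$) and $|u|\ge 10|x|^2$ (via $\nabla_\om\Psi$), and these do \emph{not} exhaust $\{|x|^2+4|u|>2\}$; for instance $|x|=|u|=1$ lies in neither. Your third case closes this gap: you identify the $\theta$-critical set with the curve $(r(\tau)^2,v(\tau))$ of \eqref{rv(t)} and observe that $r(\tau)^2+v(\tau)=\big(1+2\tau-\cos 2\tau-\sin 2\tau\big)/(2\tau^2)$ stays strictly below $2$ for $|\tau|\le 5\pi/8$, so compactness (in $(x,u)$ and in $\om/\sigma$, using the degree-zero homogeneity of $\nabla_\theta\Psi$) yields the uniform lower bound. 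One small point worth making explicit: to get $|(L^*)^N\beta_\la|\lesssim(\la B)^{-N}\la^{(d_1-1)/2}$ from Lemma~\ref{iterateddiffop} you need not only $|\nabla_\theta\Psi|\gtrsim B$ but also $|\partial_\theta^\alpha\Psi|\lesssim\la^{1-|\alpha|}B$; since in each of your regimes one has $1+|x|^2+|u|\lesssim B$ (for \eqref{largexu} note $|x|^2+|u|\ge\tfrac14(|x|^2+4|u|)>\tfrac12$), this is immediate.
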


\begin{proof} If $|x|\ge \sqrt 2$ we may integrate by parts with respect to $\sigma$ (using \eqref{g-tg'}), and obtain
$$|K_{\la}^0(x,u)|\lc_N \la^{\frac{d_1+2d_2+1}{2}-N}  |x|^{-N}, \quad |x|\ge \sqrt 2\,.
$$
If $|u|\le 10|x|^2$ this also yields \eqref{largexu}.
Since $\max_{|\tau|\le 3\pi/4}|g'(\tau)|\le 3\pi/2$ we have
$|\nabla_\om \Psi|\ge 4|u|- (3\pi/2)|x|^2$ and hence $|\nabla_\om \Psi|\ge |u|$ when
 $|u|\ge 10|x|^2$.
 Thus integration by parts in $\omega$ yields
$$|K_{\la}^0(x,u)|\lc_N \la^{\frac{d_1+2d_2+1}{2}-N}  |u|^{-N},
\quad \text{$|u|\ge 10|x|^2$.}
$$
This proves
\eqref{largexu}.

Since $|g'(\tau)|\le 3\pi$ for $|\tau|\le 3\pi/2$
we have $|\nabla_\om \Psi|\ge 2|u|$  if $|x|^2 \le 2|u|/3\pi$ and
$|\Psi_\sigma |\ge 1/2$ if $|x|^2\le 1/20$.
Integrations by parts imply \eqref{xsmall}.
\end{proof}

\subsection*{\it Fourier integral operators}

Let $\rho\ll 10^{-2}$. Let $\chi\in C^\infty_c(\bbR^d\times \bbR^d)$ so that
$$\chi(x,u,y,v)=0 \text{  for } \begin{cases}
|y|+|v|\ge \rho,\\ |x-y|<1/20, \\ |x-y|^2+ |u-v| \ge 4 .\end{cases}
$$
Let
$$b_\la(x,y,u,v,\om,\sigma)= \chi(x,u,y,v) \beta_\la(\om,\sigma),$$
 let as before $g(\tau)= \tau\cot\tau$, and let
\Be\label{Phidef}\begin{aligned}
\Phi(x,u,&y,v,\om,\sigma)\,=\, \Psi(x-y,u-v+
\tfrac 12 \inn {\vec Jx}{y},
 \om,\sigma)\\
&=\sigma \big(1-|x-y|^2 g(|\om|/\sigma )\big)
+\sum_{i=1}^{d_2}
(4u_i-4v_i-2 x^\intercal J_i y)\om_i\,.
\end{aligned}
\Ee
Let $\fF_\la$ be the Fourier integral operator with Schwartz kernel
\Be \label{Fourierint}\cK_\la(x,u,y,v)=
 \iint e^{i\Phi(x,u,y, v, \om,\sigma)}  b_\la(\om,\sigma) d\om\,d\sigma.
\Ee

Given Lemma \ref{decaylemmalargexu} it suffices to prove the inequalities
\Be\label{L1Fla}
\| \fF_\la\|_{L^1\to L^1} \le \la^{\frac{d-1}{2}}.
\Ee
and
\Be\label{h1F}
\Big\|\sum_{k>C} 2^{-k(d-1)/2} \fF_{2^k}  \Big\|_{h^1\to L^1} <\infty .
\Ee
To this end we apply results in \cite{SSS} on Fourier integral operators associated with canonical graphs and now check the required hypotheses.

\subsection*{\it Analysis of the phase function $\Phi$}
We compute the first derivatives:
\begin{align*}
\Phi_{x_j}&= -2\sigma (x_j-y_j) g(\phqu) -2 \sum_{i=1}^{d_2} \om_i e_j^\intercal J_i y
\\
\Phi_{u_i}&=4\om_i
\\
\Phi_{\om_i}&=-|x-y|^2 g'(\phqu) \tfrac{\om_i}{|\om|}
+4u_i-4v_i-2 x^\intercal J_i y
\\
\Phi_{\sigma}&=\big(1-|x-y|^2 g(\phqu )\big) + |x-y|^2 \phqu g'(\phqu)
\end{align*}
For the second derivatives we have, with $\delta_{jk}$ denoting the Kronecker delta and $J^\om=\sum_{i=1}^{d_2} \om_i J_i$
$$ \begin{aligned}
\Phi_{x_jy_k}&= 2\sigma g(\phqu)\delta_{jk} - 2  e_j^\intercal J^\om e_k\,,
\\ \Phi_{x_j v_l}&=0\,,
\\
\Phi_{x_j\om_l} &=-2(x_j-y_j)g'(\phqu) \tfrac{\om_l}{|\om|}  - 2 e_j^\intercal J_l y \,,
\\
\Phi_{x_j\sigma} &=2(x_j-y_j) \big(-  g(\phqu)+ \phqu g'(\phqu)\big)\,,
\end{aligned}
$$ and
$$\Phi_{u_iy_k}=0\,, \quad
\Phi_{u_i v_l}=0\,,\quad \Phi_{u_i \om_l}= 4\delta_{il}\,, \quad
\Phi_{u_i\sigma}=0\,.
$$
Moreover
$$
\begin{aligned}
&\Phi_{\om_i y_k}=2(x_k-y_k) g'(\phqu) \tfrac{\om_i}{|\om|}- 2 x^\intercal J_ie_k
\\
&\Phi_{\om_iv_l}=-4 \delta_{il}
\\
&\Phi_{\om_i\om_l}= -|x-y|^2 \big( g'(\phqu)
 \tfrac{\delta_{il}|\om|^2-\om_i\om_l}{|\om|^3} + g''(\phqu) \tfrac{\om_i\om_l}{\sigma|\om|^2} \big)
\\
&\Phi_{\om_i\sigma}= |x-y|^2 \tfrac{\om_i}{\sigma^2} g''(\phqu)
\end{aligned}$$ and
$$
\begin{aligned}
&\Phi_{\sigma y_k}=2(x_k-y_k) \big( g(\phqu)-\phqu g'(\phqu) \big)
\\
&\Phi_{\sigma v_l}=0
\\
&\Phi_{\sigma \om_l}=|x-y|^2 \tfrac{\om_l}{\sigma^2}g''(\phqu)
\\
&\Phi_{\sigma\sigma}=-|x-y|^2 \tfrac{|\om|^2}{\sigma^3} g''(\phqu)
\end{aligned}
$$
The required
$L^2$ boundedness properties follow if we can show that
associated canonical relation is locally the graph of a canonical transformation;
this follows from the invertibility of the matrix
\Be\label{PhimixedHess}
\begin{pmatrix}
\Phi_{xy}&\Phi_{xv}&\Phi_{x\om}&\Phi_{x\sigma}
\\
\Phi_{uy}&\Phi_{uv}&\Phi_{u\om}&\Phi_{u\sigma}
\\
\Phi_{\om y}&\Phi_{\om v}&\Phi_{\om\om}&\Phi_{\om\sigma}
\\
\Phi_{\sigma y}&\Phi_{\sigma v}&\Phi_{\sigma \om}&\Phi_{\sigma\sigma}
\end{pmatrix}\,,
\Ee
see \cite{hoermander-fio}.
This matrix is given by
$$
\begin{pmatrix}
2\sigma g I_{d_1}-2 J^\om &0&  (*)_{13} & 2(x-y) (\tau g'-g)
\\
0&0&4I_{d_2}&0
\\
(*)_{31} & -4I_{d_2} &(*)_{33} &(*)_{34}
\\
2(x-y)^\intercal (g-\tau g')&0 &(*)_{43} &-|x-y|^2\sigma^{-1} \tau^2g''
\end{pmatrix}\,,
$$
where
 $\tau=\phqu$,  $g, g', g''$ are evaluated at $\tau=\phqu$, and
$x-y$ is considered a $d_1\times 1$ matrix,
 $ (*)_{13}$ is a $d_1\times d_2$-matrix,
  $ (*)_{31}$ is a $d_2\times d_1$-matrix,
  $ (*)_{33}$ is a $d_2\times d_2$-matrix,
    $ (*)_{34}$ is a $d_2\times 1$-matrix,  and
    $ (*)_{43}=(*)_{34}^\intercal$.

The determinant $D$
  of the displayed matrix is equal to
\Be\label{detD}D=16^{d_2} \det
\begin{pmatrix}
2\sigma g I_{d_1}-2J^\om & 2(x-y) (\tau g'-g)
\\
2(x-y)^\intercal (g-\tau g')& -|x-y|^2 \sigma^{-1}\tau^2g''
\end{pmatrix}\,.
\Ee

To compute this we use the formula
$$
\begin{pmatrix}
I&0\\a^\intercal &1
\end{pmatrix}
\begin{pmatrix} A&-b\\b^\intercal &\gamma
\end{pmatrix}
\begin{pmatrix} I&-a\\0&1
\end{pmatrix}
\,=\,
\begin{pmatrix}A&
-Aa-b
\\a^\intercal A+b^\intercal & -a^\intercal A a- 2a^\intercal b+\gamma
\end{pmatrix}\,.
$$
If $A$ is invertible we can choose
 $a= - A^{-1} b$.
 Since $b^\intercal S b=0$ for the skew symmetric matrix $S= (A^{-1})^\intercal -A^{-1}$
 this  choice of $a$ yields the matrix
 $$\begin{pmatrix}
 A&0\\-b^\intercal (A^{-1})^\intercal A +b^\intercal &-b^\intercal (A^{-1})^\intercal b-2b^\intercal A^{-1} b +\gamma
 \end{pmatrix}=\begin{pmatrix} A&0\\  *  & \gamma +b^\intercal A^{-1} b\end{pmatrix}
 $$
 and
 hence
 \Be \label{determinantformula}
 \det \begin{pmatrix} A&-b\\b^\intercal &\gamma\end{pmatrix} = ( \gamma+ b^\intercal A^{-1} b) \det (A)\,.
 \Ee

 \begin{lemma}\label{skewsymnew} Let $c,\Lambda\in \bbR$, $c^2+\La^2\neq 0$. Let $S$ be a skew symmetric $d_1\times d_1$- matrix satisfying
 $S^2=-\La^2 I$. Then $cI+S$ is invertible with
 $$(cI+S)^{-1}= \frac{c}{c^2+\Lambda^2} I-\frac{1}{c^2+\Lambda^2} S,$$
 and $\det (cI+S)=(c^2+\Lambda^2)^{\frac {d_1}2}.$
 \end{lemma}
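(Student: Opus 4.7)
The plan is straightforward: check the proposed formula for the inverse directly, then compute the determinant via the eigenvalues of $S$ over $\bbC$.

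For the inverse, observe that $(cI+S)(cI-S) = c^2 I - S^2 = (c^2+\Lambda^2)I$ using the hypothesis $S^2=-\Lambda^2 I$. Since $c^2+\Lambda^2\ne 0$, this immediately gives $(cI+S)^{-1} = (c^2+\Lambda^2)^{-1}(cI-S)$, which is the claimed formula. No further computation is needed here.

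For the determinant, I would first treat the degenerate case $\Lambda=0$. Then $S^2=0$, and since $S$ is real skew-symmetric we have $S^\top S = -S^2 = 0$, hence $\|Sv\|^2 = \lan S^\top Sv,v\ran = 0$ for every $v$, so $S=0$. In that case $\det(cI+S) = c^{d_1} = (c^2)^{d_1/2}$, matching the claim (recall $d_1$ is even). For $\Lambda\ne 0$, the minimal polynomial of $S$ divides $X^2+\Lambda^2$, so $S$ is diagonalizable over $\bbC$ with eigenvalues in $\{i\Lambda,-i\Lambda\}$. Since $S$ is a real matrix, non-real eigenvalues occur in conjugate pairs of equal multiplicity, so $+i\Lambda$ and $-i\Lambda$ each appear with multiplicity $d_1/2$. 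Therefore $cI+S$ has eigenvalues $c+i\Lambda$ and $c-i\Lambda$, each with multiplicity $d_1/2$, giving
\[
\det(cI+S) \,=\, (c+i\Lambda)^{d_1/2}(c-i\Lambda)^{d_1/2} \,=\, (c^2+\Lambda^2)^{d_1/2}.
\]

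There is no real obstacle; the only minor care needed is to dispose of the case $\Lambda=0$ separately (where diagonalization is replaced by the rigidity argument $S^\top S=0\Rightarrow S=0$) and to invoke the fact that $d_1$ is even to conclude that the two conjugate eigenvalues of the real matrix $S$ have equal multiplicity $d_1/2$.
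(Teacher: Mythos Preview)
Your proof is correct and, for the inverse, identical to the paper's one-line argument $(cI+S)(cI-S)=c^2I-S^2=(c^2+\Lambda^2)I$. For the determinant the paper simply notes that $cI-S=(cI+S)^*$ and takes determinants of this same identity, obtaining $(\det(cI+S))^2=(c^2+\Lambda^2)^{d_1}$; your eigenvalue argument is a slightly different route that has the minor advantage of fixing the sign directly rather than leaving it implicit.
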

 \begin{proof}  $(cI+S)(cI+S)^*=(cI+S)(cI-S)=c^2 I-S^2= (c^2+\Lambda^2)I$.
 \end{proof}

 In our situation \eqref{detD}  we have $A=cI+S,$ with
$$\begin{aligned}
c&=2\sigma g(\phqu), \\ S&=-2J^\om\,,\end{aligned}$$
moreover,
$$\begin{aligned}
\Lambda&=2|\om|,\\
\gamma&=-|x-y|^2\sigma^{-1} (\tfrac{|\om|}{\sigma})^2 g''(\phqu),
 \\b&=2(x-y) (g(\phqu)-\phqu g'(\phqu))\,.\end{aligned}
$$
In particular, if we recall that $\tau=|\om|/\sigma,$ we see that
$$\det A=\big((2\si g(\tau))^2+(2|\om|)^2\big)^{\frac{d_1}2}=(2\sigma)^{d_1}\Big(\frac \tau {\sin \tau}\Big)^{d_1}.
$$
Moreover,
\begin{align*}&\gamma+ b^\intercal A^{-1} b\\
&=|x-y|^2\Big( -  \frac{|\om|^2}{\sigma^3} g''(\phqu)
+ 4\big(g(\phqu)-\phqu g'(\phqu)\big)^2 \frac{ 2\sigma g(\phqu)}
{4\sigma^2 g(\phqu)^2 + 4|\om|^2}\Big)\\
&=\frac{|x-y|^2}{\sigma} \Big(-\tau^2 g''(\tau) +2(g(\tau)-\tau g'(\tau))^2 \frac{g(\tau)}{g(\tau)^2+\tau^2} \Big).
\end{align*}
From  \eqref{gfirstderiv}, we get $$g(\tau)-\tau g'(\tau)=\Big(\frac
\tau{\sin\tau}\Big)^2,$$  and in combination with
 \eqref{gfsecderiv}  this implies after a calculation  that
 $$
 \gamma+ b^\intercal A^{-1} b=\frac{|x-y|^2}{\sigma}2\Big(\frac \tau {\sin \tau}\Big)^2.
 $$
 Thus we see from \eqref{determinantformula} that  the  determinant of the matrix \eqref{PhimixedHess} is  given by
 \begin{equation}\label{ detHessian}
D=2^{d_1+4d_2+1} \sigma^{d_1-1}\Big(\frac \phqu {\sin \phqu}\Big)^{d_1+2}.
\end{equation}
This shows that $D>0$  for $\phqu\in [0,\pi),$ and $D\sim \sigma^{d_1-1}$ for $\phqu\in [0,\pi-\de],$
for every  sufficiently small $\de>0.$ In particular, the matrix \eqref{PhimixedHess} is invertible for
$\phqu\in [0,\pi-\de].$

\bigskip

\medskip

We now write $$\fF_\la f(x)= \int K_\la(x,y) f(y) dy$$ where $K_\la$ is given by our oscillatory integral representation \eqref{Fourierint}. In that formula
 we have $d_2+1$  frequency variables $d_2+1$, and thus, given any $\alpha\in \bbR$  the operator
convolution with $\sum_{k>C} \fF_{2^k} 2^{-k\alpha}$  is   a Fourier integral operator of
order $$\frac{d_1-1}{2}-\alpha- \frac{d-(d_2+1)}{2}=-\alpha\,.$$

With these observations we can now apply the boundedness result of \cite{SSS}
and deduce that
 $$ \|\fF_\la f\|_1\lc \la^{\frac{d-1}{2} }\|f\|_1$$ and
 $$ \Big\|\sum_{k>C} 2^{-k(d-1)/2} \cF_{2^k}  f_\rho\Big \|_1 \lc 1$$
 for  standard $h_1$ atoms supported in $B_\rho$.
 But atoms associated to  balls centered at the  origin are also atoms in our Heisenberg Hardy space
$h^1_\iso$.  Thus if we also take into account Lemma
\ref{decaylemmalargexu} and use translation invariance under Heisenberg translations we get
$$\Big\|\sum_{k\ge 0} T^0_{2^k} f \Big \|_1\lc \|f\|_{h^1_\iso}.$$

{\it Remark.} We also have
\begin{multline*}
\begin{pmatrix}
\Phi_{\om\om}&\Phi_{\om\sigma}\\ \Phi_{\sigma\om}&\Phi _{\sigma\sigma}
\end{pmatrix}
\\=|x-y|^2
\begin{pmatrix}
-\big(g'(\phqu)\frac{I_{d_2}|\om|^2-\om\om^\intercal}{|\om|^3}
+g''(\phqu) \frac{\om\om^\intercal}{\sigma|\om|^{2}}\big)
&  \tfrac{\om}{\sigma^2} g''(\phqu)
\\
\tfrac{\om^\intercal}{\sigma^2}
g''(\phqu)& -
\tfrac{|\om|^2}{\sigma^3} g''(\phqu)
\end{pmatrix} \,
\end{multline*}
which has   maximal rank $d_2+1-1=d_2$.
Thus the above result can also be deduced from Beals \cite{beals},
via the equivalence of phase functions theorem.

\section{The operators $T_\la^k$}
\label{Tlaksection}
We now consider the operator  $T^\la_k$, for $k\ge 1$,  as defined in \eqref{Tkla}.
In view of the singularities of $\cot$
we need a further decomposition in terms of the
distance to the singularities.
For $l=1,2,\dots$ let
$\eta_l(s)=\eta_0(2^{l-1} s)-\eta_0(2^l s)$ so that $$\eta_0(s)=
\sum_{l=1}^\infty  \eta_l(s)\text{  for $s\neq 0$.}$$
Define
\Be
\label{Tlakdef}
T^{k,l}_\la =\la^{1/2}\int e^{i\frac{\la}{4\tau}}
 a_\la(\tau) \eta_l (\tfrac {\tau}{ \la}|U|- k\pi)
e^{i\tau L/\la}
d\tau;
\Ee
then
\Be\label{decompTlak}
T^k_\la =\sum_{l=1}^\infty
T^{k,l}_\la\,.
\Ee

 From the formula \eqref{Kklaexpression} for the kernels $K_\la^k$ we get
 a corresponding formula for the kernels $K^{k,l}_\la$, namely
 \begin{multline*}
K^{k,l}_\la(x,u) \,=\, \la^{1/2} \int_{\bbR^{d_2}}
\int_\bbR e^{i\frac{\la}{4\tau}}
 a_\la(\tau) \eta_l (2\pi|\mu|\tfrac {\tau}{ \la}- k\pi)\,\times
\\
\Big(\frac{|\mu|}{2\sin (2\pi|\mu|\tau/\la)}\Big)^{d_1/2}
e^{-i|x|^2\frac {\pi}{ 2} |\mu|\cot (2\pi  |\mu|\tau/\la)} d\tau
\, e^{2\pi i \inn{u}{ \mu}} d\mu\,.
\end{multline*}
Now we use polar coordinates in $\bbR^{d_2}$ and the fact that the
Fourier transform of the surface carried measure on the unit sphere in
$\bbR^{d_2}$ is given by  $$(2\pi)^{d_2/2}\bess(2\pi|u|), \text{ with }
 \bess(\sigma):=\sigma^{-\frac{d_2-2}2} {J}_{\frac{d_2-2}{2}} (\sigma)$$
(the standard Bessel function formula, \cf. \cite{stw}, p.154).
Thus
\begin{multline*}
K^{k,l}_\la(x,u) \,=\,
\la^{1/2} \int_{0} ^\infty
\int_\bbR e^{i\frac{\la}{4\tau}}
 a_\la(\tau) \eta_l (2\pi\tau \rho / \la- k\pi)\,\times
\\
\Big(\frac{\rho}{2\sin (2\pi\tau \rho/\la)}\Big)^{d_1/2}
e^{-i\frac {\pi}{ 2}|x|^2
 \rho\cot (2\pi  \rho\tau/\la)} \,d\tau\,(2\pi)^{d_2/2} \bess (2\pi
\rho |u|)\,\rho^{d_2-1} d\rho\,.
\end{multline*}
In this integral we introduce new  variables
\Be\label{stvariables}
(s,t)= \Big( \frac{1}{4\tau},  \frac{2\pi \tau \rho}{\la}\Big),
\Ee
so that $(\tau,\rho)=((4s)^{-1}, 2\la ts/\pi)$ with $d\tau d\rho= \la(2\pi s)^{-1} ds dt$. Then we obtain  for $k\ge 1$
\begin{multline} \label{Klaklrep}
K^{k,l}_\la(x,u) = \la^{d_2+\frac{d_1+1}2}
\quad \times \\\iint
 \beta_\la(s)
\eta_l(t-k\pi)
\Big(\frac{t}{\sin t}\Big)^{d_1/2} t^{d_2-1}
e^{i\la s \psi(t,|x|) } \bess (4s \la  t|u| )
\, ds \, dt
\end{multline}
 where
\begin{equation}\label{psiphase}
\psi(t,r)= 1- r^2 t\cot t
\end{equation}
and \Be\label{betala}\beta_\la(s)= 2^{\frac{3d_2}2-2}\pi^{-\frac{d_1+d_2}{2}}
a_\la(\tfrac 1{4s}) \,s^{\frac{d_1}{2}+d_2-2};\Ee
thus $\beta_\la$ is $C^\infty$ with bounds uniform in $\la$, and  $\beta_\la$ is  also supported in $[1/16,4]$.


In the next two sections  we shall prove
the
$L^1$ estimates
\Be\label{L1boundsKlakl}
\sum_{k<8\la}\sum_{l=0}^{\infty}\iint \la^{-\frac{d-1}{2}} |K^{k,l}_\la(x,u)| \, dx\, du = O(1)
\Ee
and
Theorem \ref{TjL1thm} and then also
Theorem \ref{main-theoremL1} will  follow by summing the pieces.
 Moreover we shall give some
refined estimates which will be used in the proof of Theorem
\ref{refinedh1thm}.

\subsection{\it An $L^\infty$ bound for the kernels}
The  expression
\Be
\label{Clakl}
\fC_{\la,k,l}
= \la^{1+\frac{d_2}{2}}  k^{d_2-1}(2^{l}k)^{\frac{d_1}{2}}
\Ee
will frequently appear in pointwise estimates,
namely as  upper bounds for the integrand in the integral defining
$\la^{-\frac{d-1}{2}}K^{k,l}_\la$.
%
Note that
\Be \label{Claklinfty}
\|\la^{-\frac{d-1}{2}}K^{k,l}_\la\|_\infty
\lc 2^{-l}\fC_{\la,k,l}\,;
\Ee
the additional factor of $2^{-l}$ occurs since the
  integration  in $t$ is  over the union of two  intervals of length $\approx 2^{-l}$.

\subsection{\it Formulas for the phase functions}\label{reductions}


For later reference we gather some formulas for the $t$-derivatives
of the phase $\psi(t,r)=1-r^2 t\cot t$:
\begin{subequations}
\begin{align}
\psi_\mut(\mut,r)&= r^2 \Big(\frac{\mut}{\sin^2\mut}-\cot\mut\Big)
\label{psit}
\\&= r^2\Big(\frac{2t-\sin(2t)}{2\sin^2 t}\Big);
\label{psitalt}
\end{align}
\end{subequations}
moreover
\begin{align}
\psi_{\mut\mut}(\mut,r)&= \frac{2r^2}{\sin^3\mut}
\big(\sin \mut-\mut \cos \mut\big)\,=\, \frac{2r^2}{\sin^3\mut}\int_0^t\tau\sin\tau\,d\tau.
\label{psitt}
\end{align}
Observe  that $\psi_{tt}=0$ when $\tan t=t$ and  $t\neq 0$ and thus
$\psi_{tt}(t,r)\approx r^2$ for $0\le t\le \tfrac {3\pi}{4}$,
namely, we use $\frac{2\sqrt 2}{3\pi}t\le\sin t\le t$ to get the crude estimate
\Be \label{psittequiv}
\pi^{-1} r^2< \psi_{tt}(t,r)< \pi^3 r^2, \quad 0< t\le \tfrac {3\pi}{4}\,.
\Ee
It is also straightforward to establish estimates for
the higher derivatives:
\begin{equation} \label{higherpsidersmallt}
|\partial_t^{n} \psi(t,r)|\lc r^2, \qquad |t|\le 3\pi/4
\end{equation}
and
\begin{equation} \label{higherpsider}
\partial_t^{n} \psi(t,r)
=O\Big(\frac{r^2 |t|}{|\sin t|^{n+1}}\Big), \quad
\end{equation}
for all $t$.

\medskip

\subsection{\it Asymptotics in the main case  $|u|\gg   (k\la)^{-1}$}
We shall see in the next section that there are straightforward  $L^1$ bounds
in the region where $|u|\lc (k+1)^{-1}\la^{-1}$. We therefore concentrate on
the region
$$\{(x,u):|u|\ge C(k+1)^{-1}\la^{-1}\}$$ where
we have to take into account the oscillation
of the terms
$\bess (4s \la  t|u| )$.
The standard asymptotics for Bessel functions imply that for
\Be \bess(\sigma) = e^{- i|\sigma|}\vpi_1(|\sigma|)+
e^{i|\sigma|}\vpi_2(|\sigma|),\qquad |\sigma|\ge 2,
\label{bessel}
\Ee
where $\vpi_1, \vpi_2\in S^{-(d_2-1)/2}$
are  supported  in $\Bbb R\setminus [-1,1]$.

Thus we may split, for $|u|\gg(k+1)^{-1}\la^{-1}$,
\Be
\label{ABsplitkl} \la^{-\frac{d-1}2} K^{k,l}_\la(x,u) = A_\la^{k,l}(x,u)+ B^{k,l}_\la
(x,u)\Ee
where, with $\fC_{\la,k,l}$ defined in \eqref{Clakl},
\Be\label{Adefin}
A_\la^{k,l} (x,u) = \fC_{\la,k,l} \iint \eta_{\la,k,l}(s,t)
e^{i\la s(\psi(t, |x|)- 4t|u|)}
\vpi_1(4\la st|u|)\,  dt \,ds\,,
\Ee
and
\Be\label{Bdefin}
B_\la^{k,l} (x,u) = \fC_{\la,k,l} \iint \eta_{\la,k,l}(s,t)
 e^{i\la s(\psi(t, |x|)+4t|u|)}
\vpi_2(4\la st|u|)\,  dt \,ds\,;
\Ee
here, as before
$\psi(t,r)=1-r^2 t\cot t$ and,
with $\beta_\la$ as in \eqref{betala},
\begin{subequations}
\label{etadefinitions}
\begin{align}
\label{etadefinitionzero}
\eta_{\la,0}(s,t)&=\beta_\la(s)\eta_0(t)
\big(\frac {t}{\sin t}\big)^{d_1/2} t^{d_2-1}\,,
\\
\label{etadefinitionkl}
\eta_{\la,k,l}(s,t)&= \beta_\la(s) \eta_l(t-k\pi)
\big(\frac {t/k}{2^l\sin t}\big)^{d_1/2}
(t/k)^{d_2-1}\,.
\end{align}
\end{subequations}

Note that $\|\partial_s^{N_1}\partial_t^{N_2} \eta_{\la,k,l}\|_\infty \le C_{N_1,N_2} 2^{lN_2}$. Moreover if
\Be \label{Jkl}
J_{k,l}:=
(k\pi- 2^{-l}\tfrac{5\pi}4, k\pi-2^{-l}\tfrac {3\pi}8]
\cup
[k\pi+ 2^{-l}\tfrac{3\pi}8, k\pi+2^{-l}\tfrac {5\pi}4)
\Ee
then
\Be\label{supportetakl}\eta_{\la,k,l}(s,t)\neq 0 \implies t\in J_{k,l}\,.
\Ee

The main contribution in our estimates comes from the kernels
 $A_{\la}^{k,l}$  while  the kernels
$B_{\la}^{k,l}$   are negligible  terms with rather small $L^1$ norm.
The latter will follow from the support properties of $\eta_{\la,k,l}$ and the observation
that $$\partial_t (\psi(t,|x|)+4t|u|)\neq 0, \quad (x,u)\neq (0,0);$$
\cf. \eqref{psitalt}.
As a consequence
only the kernels  $A_{\la}^{k,l}$  will exhibit the singularities of the kernel away from the origin.

\subsection{\it The phase functions and the  singular support}\label{phase-singsupp}
We introduce polar coordinates in $\bbR^{d_1}$ and $\bbR^{d_2}$
(scaled by a factor of $4$ in the latter)  and set
$$r=|x|\,,\qquad v= 4|u|\,.$$
We define  for {\it all}  $v\in \bbR$,
\Be\label{defphi}
\phi(t,r,v)\,:=\,\psi(t,r)-tv\,=\,1-r^2t\cot t -t v,\,.
\Ee

Then from \eqref{psitalt}  and \eqref{psit}
\Be \label{phimualt}
\begin{aligned}
\phi_t(t,r,v)
&= r^2\Big(\frac{2t-\sin(2t)}{2\sin^2 t}\Big)-v
\\&=  \frac{r^2\mut}{\sin^2\mut}-\frac{1}{\mut} +
\frac{\phi(\mut,r,v)}{t} \,.
\end{aligned}
\Ee
Moreover $\phi_{tt}=\psi_{tt}$, and we will use  the formulas
\eqref{psitt} and \eqref{higherpsider} for  the derivatives of $\phi_t$.

\medskip


 \medskip


If we  set
\Be\label{rv(t)}\begin{gathered}
r(t)= \Big| \frac{\sin t}{t}\Big|\,,\quad v(t)= \frac 1{\mut} - \frac{\sin(2\mut)}{2\mut^2}
\\
r(0)=1\,,\quad v(0)= 0
\end{gathered}
\Ee
then we have
\begin{subequations}\label{phaseversuscurve}
\begin{align}
 \label{phitversuscurve}
\phi_t(t,r,v)&= \frac{v(t)}{r^2(t)} r^2 -v\,=
- \Big(v-v(t) -
\, v(t)\frac{r^2-r(t)^2}{r(t)^2}\Big)\,,
\\
 \label{phiversuscurve}
\phi(t,r,v)&= \frac{r(t)^2-r^2}{r(t)^2} + t\phi_t(t,r,v)\,.
\end{align}
\end{subequations}
Thus

\Be\label{system}
\phi(\mut,r,v)=\phi_\mut(\mut,r,v)=0 \quad \iff \quad (r,v)=(r(t),v(t))\,.
\Ee
Only the points $(r,v)$ for which there exists a $t$ satisfying \eqref{system} may contribute to the singular support $\Gamma$  of $e^{i\sqrt L} \de_0.$
One recognizes the result
by Nachman \cite{nachman} who showed for the Heisenberg group
 that  the singular support of the
 convolution kernel of $e^{i\sqrt L} $   consists of those $(x,u)$
for which there is a $t>0$ with $(|x|,4|u|)=(r(t),v(t))$.


\begin{figure}
\bigskip
\vspace{0.2cm}
\includegraphics[width=.8\textwidth]{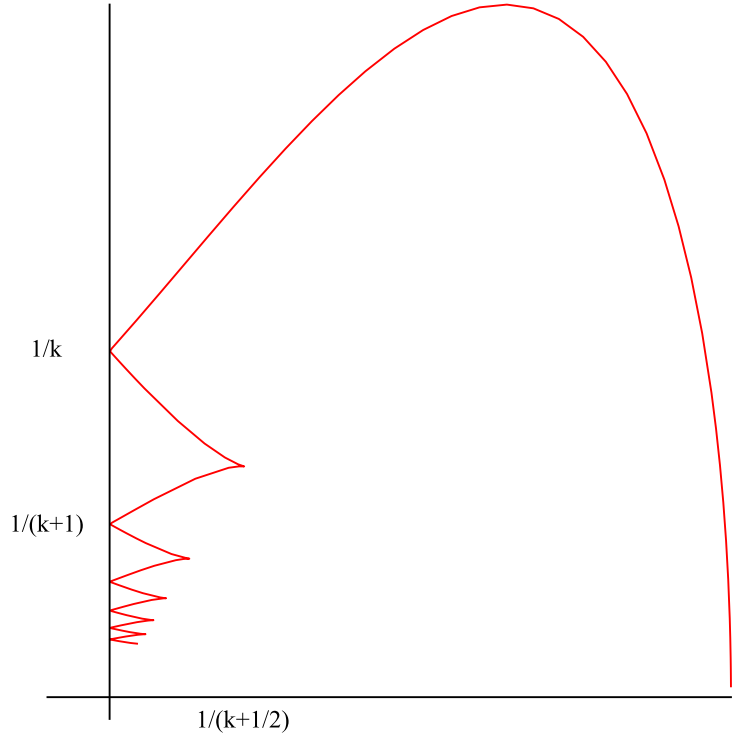}
\vspace{-0.2cm}
\medskip
\caption{$\{\pi(r(t),v(t)): t>0\}$}
\end{figure}

The figure pictures the singular support, including the contribution  near $|u|=0$ and $|x|$ near $1$. However we have taken care of the corresponding estimates in \S\ref{FIO}, and thus
we are only interested in the above formulas for $t>3\pi/8$.

For later reference we gather some  formulas and estimates for the derivatives
of $r(t)$ and $v(t)$.
For the vector of first derivatives we get, for $t\notin \pi\bbZ$,
\Be \label{rvprime}
\begin{pmatrix} r'(t)\\ v'(t)\end{pmatrix}=
\frac{\sin t-t\cos t}{t^2}\begin{pmatrix} -\sgn((\sin t)/t)\\ 2t^{-1}\cos t
\end{pmatrix}
\Ee
with $ r'(t)=O(t)$ and $v'(t)-\frac 23=O(t)$ as $t\to 0$.
Hence, for
$t\notin \pi\bbZ$,
\begin{equation} \label{slope}
\frac{v'(t)}{r'(t)}= -\sgn((\sin t)/t) \frac{2\cos t}t  = - 2r(t) \cot t\,.
\end{equation}
Clearly all derivatives of $t$ and $v$ extend to
functions continuous  at $t=0$.
Further computation yields   for positive  $t\notin \pi \ZZ$,
$\nu\ge1$,
\begin{subequations}\label{higherrvderform}
\begin{equation}
\sgn(\frac{\sin t}t)\,  r^{(\nu)}(t)=
 \sum_{n=1}^{\nu+1} a_{n,\nu} t^{-n} \sin t
+ \sum_{n=1}^\nu b_{n,\nu} t^{-n} \cos t
\end{equation} and
\begin{equation}
v^{(\nu)}(t)=\gamma_\nu t^{-\nu-1} +
\sum_{n=1}^{\nu+1} c_{n,\nu} t^{-n-1} \sin 2t
+ \sum_{n=1}^\nu d_{n,\nu} t^{-n-1} \cos 2t \,;
\end{equation}
\end{subequations}
here
$a_{n,\nu}=c_{n,\nu}=0$ if $n-\nu$ is even, and $b_{n,\nu}=d_{n,\nu}=0$
if $n-\nu$ is odd;
moreover $\gamma_\nu=(-1)^\nu(\nu-1)!$, and
  $a_{1,\nu}=(-1)^{\nu/2}$ for $\nu=2,4,\dots$.
For the coefficients in the  first derivatives formula   we get
$b_{1,1}=1$, $a_{2,1}=-1$, $d_{1,1}=-1$, and $c_{2,1}=1$.
For the second derivatives, we have the coefficients
$a_{1,2}=-1$,  $b_{2,2}=-2$, $a_{3,2}=2$,
$c_{1,2}=2$, $d_{2,2}=4$, $c_{3,2}=-3$.
Consequently, for the second derivatives we get the estimates
\Be\label{secondrvderbound}
|r''(t)|\lc t^{-1}|\sin t| + (1+t)^{-2}, \quad
|v''(t)|\lc t^{-2}|\sin 2t|+(1+t)^{-3}.
\Ee
Also, $|r^{(\nu)}(t)|\lc_\nu(1+t)^{-1}$, and $|v^{(\nu)}(t)|\lc_\nu (1+t)^{-2}$ for all $t>0$.


\section{$L^1$ estimates}\label{L1section}
In this section we prove the essential $L^1$ bounds needed for the
proof of Theorem \ref{main-theoremL1}. We may assume that $\la$ is large.

In what follows we frequently  need to  perform  repeated integrations by parts in the presence of oscillatory
terms with nonlinear phase functions and we start with a standard calculus lemma which will be used several times.

\subsection{\it Two preliminary lemmata}
Let $\eta\in C^\infty_0(\bbR^n)$ and let
$\Phi\in C^\infty$ so that $\nabla\Phi\neq 0$ in the support of $\eta$.
Then, after  repeated integration by parts,
\Be\label{repeatedinbyparts}
\int e^{i\la \Phi(y)} \eta(y) \,dy = (i/\la)^{N} \int e^{i\la
  \Phi(y)} \cL^N \!\eta(y) \, dy
\Ee where the operator $\cL$ is defined by
\Be \label{diffopdef}
\cL a
= \text{div} \big(\frac{a\nabla \Phi}{|\nabla\Phi|^2}\big).
\Ee
In order to analyze the behavior of $\cL^N$ we
shall need a lemma. We use multiindex notation, i.e. for $\beta=(\beta^1,\dots, \beta^n)\in (\bbN\cup\{0\})^n$
we write $\partial^\beta = \partial_{y_1}^{\beta^1}\cdots
\partial_{y_n}^{\beta^n}$ and let $|\beta|=\sum_{i=1}^n \beta^i$ be the order of the multiindex.

\begin{lemma} \label{iterateddiffop}
 Let $\cL$ be as in \eqref{diffopdef}. Then $\cL^N a$ is a linear
  combination of $C(N,n)$ terms of the form
$$\frac{\partial^\alpha a \prod_{\nu=1}^j \partial^{\beta_\nu} \Phi}
{|\nabla\Phi|^{4N}}$$ where $2N\le j\le 4N-1$ and
 $\alpha, \beta_1,\dots, \beta_{j}$ are multiindices  in $(\bbN\cup\{0\})^n$
with $1\le |\beta_\nu|\le |\beta_{\nu+1}|$,  satisfying
\begin{enumerate}
\item \ \ $0\le |\alpha|\le N$,
\item \ \ $|\beta_\nu|=1$ for $\nu=1,\dots, 2N$,
\item \ \ $|\alpha|+\sum_{\nu=1}^j|\beta_\nu|=4N$,
\item \ \ $\sum_{\nu=1}^j (|\beta_\nu|-1)=N-|\alpha|$.
\end{enumerate}
\end{lemma}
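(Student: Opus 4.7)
The plan is to prove the lemma by induction on $N$. For the base case $N=1$, I compute directly
\[
\cL a \,=\, \sum_i \partial_i\Big(\frac{a\,\partial_i\Phi}{|\nabla\Phi|^2}\Big) \,=\, \frac{\nabla a\cdot\nabla\Phi}{|\nabla\Phi|^2} + \frac{a\,\Delta\Phi}{|\nabla\Phi|^2} - \frac{2a\sum_{i,k}\partial_i\Phi\,\partial_k\Phi\,\partial_i\partial_k\Phi}{|\nabla\Phi|^4},
\]
then multiply the first two summands by $|\nabla\Phi|^2/|\nabla\Phi|^2$ so that every denominator equals $|\nabla\Phi|^4$. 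Expanding $|\nabla\Phi|^2=\sum_k(\partial_k\Phi)^2$, every resulting monomial has $j=3$ factors $\partial^{\beta_\nu}\Phi$, and conditions (1)--(4) are verified by inspection.

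For the induction step, assume the representation holds for some $N\ge 1$ and apply $\cL$ to a typical summand
\[
T \,=\, \frac{\partial^\alpha a \prod_{\nu=1}^j \partial^{\beta_\nu}\Phi}{|\nabla\Phi|^{4N}}.
\]
Using
\[
\cL T \,=\, \frac{\nabla T\cdot\nabla\Phi}{|\nabla\Phi|^2} + T\cdot\Big(\frac{\Delta\Phi}{|\nabla\Phi|^2}-\frac{2\sum_{i,k}\partial_i\Phi\,\partial_k\Phi\,\partial_i\partial_k\Phi}{|\nabla\Phi|^4}\Big),
\]
I expand $\partial_i T$ via product and quotient rules and, wherever the denominator ends up as $|\nabla\Phi|^{4N+2}$, multiply numerator and denominator by $|\nabla\Phi|^2=\sum_k(\partial_k\Phi)^2$ to normalize every denominator to $|\nabla\Phi|^{4(N+1)}$. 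Each resulting summand picks up exactly three additional $\partial^{?}\Phi$-factors compared with $T$: two from the expansion of $|\nabla\Phi|^2$, plus one from either a new $\partial_i\Phi$ supplied by the divergence or from the derivative $\partial_i$ hitting some $\partial^{\beta_\mu}\Phi$. Hence the new index $j'=j+3$ lies in $[2(N+1),4(N+1)-1]$ precisely because $j\in[2N,4N-1]$. The order $|\alpha|$ either stays the same or increases by one (the latter when $\partial_i$ falls on $\partial^\alpha a$), and one checks immediately that $|\alpha|+\sum_{\nu}|\beta_\nu|$ increases by $4$ while $\sum_\nu(|\beta_\nu|-1)-(N-|\alpha|)$ is unchanged, giving (3) and (4).

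The main piece of bookkeeping — and the step I expect to require most care — is condition (2), the invariant that the first $2(N+1)$ multi-indices have length one. The worst subcase is when the derivative $\partial_i$ falls on an existing $\partial^{\beta_\mu}\Phi$ with $\mu\le 2N$ and $|\beta_\mu|=1$: that factor becomes length two, costing one length-one factor. However, the same summand simultaneously acquires two length-one factors from the $|\nabla\Phi|^2$-expansion in the numerator, and a further length-one factor $\partial_i\Phi$ from the outer divergence (or, in the $T\cdot D$-piece, from $D$ itself), for a net gain of at least two length-one factors. In the complementary subcases (the derivative hits $a$, or hits a $\partial^{\beta_\mu}\Phi$ with $\mu>2N$), no length-one factor is destroyed and three are added. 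After reordering the $\beta_\nu$ so that length-one multi-indices come first, one concludes that at least $2(N+1)$ of them have length one, completing the induction.
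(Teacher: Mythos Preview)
Your proof is correct and follows exactly the approach the paper indicates: induction on $N$, with the base case computed directly and the inductive step carried out by expanding $\cL T$ via the product and quotient rules. The paper itself only writes ``Use induction on $N$. We omit the straightforward details,'' so you have simply supplied those details; there is no substantive difference in strategy. One small remark: conditions (3) and (4) together actually force $j=3N$, so your observation that $j'=j+3$ in every subcase is precisely what is needed, and the stated range $2N\le j\le 4N-1$ is just a convenient (if loose) envelope.
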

\begin{proof} Use induction on $N$. We omit the
straightforward details.\end{proof}

{\it Remark:}  {\it In dimension $n=1$  we see that $\cL^N a$ is a linear
  combination of $C(N,1)$ terms of the form
$$
\frac {a^{(\al)}}{(\Phi')^\al}\prod_{\beta\in \fI}\frac{\Phi^{(\beta)}}{(\Phi')^{\beta}},
$$
where $\fI$ is a set of integers $\beta\in\{2,\dots, N+1\}$ with the property that $\sum_{\beta\in \fI}(\beta-1)=N-\alpha$.
If $\fI$ is the empty set then we interpret the product as $1$. }

In what follows we shall often use the following
\begin{lemma}\label{integralobs}
Let $\Lambda>0$, $\rho>0$, $n\ge 1$ and $N>\frac{n+1}2$.
Then
\begin{equation*}\int_{-\infty}^\infty \frac{
(1+\La|v|)^{-\frac{n-1}{2}} |v|^{n-1}}
{(1+\La|\rho-v|)^{N}} dv
\lcs{n}
 \begin{cases} \La^{-\frac{n+1}{2}} \rho^{\frac{n-1}2}
&\text{ if } \La\rho\ge 1\,,
\\
\La^{-n} &\text{ if }
\La\rho\le 1\,.\qquad
\end{cases}
\end{equation*}
\end{lemma}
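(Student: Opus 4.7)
The plan is to reduce to a dimensionless problem by the substitution $u=\La v$, which yields
\[
\int_{-\infty}^\infty \frac{(1+\La|v|)^{-\frac{n-1}{2}} |v|^{n-1}}{(1+\La|\rho-v|)^{N}} dv
\,=\, \La^{-n}\int_{-\infty}^\infty \frac{(1+|u|)^{-\frac{n-1}{2}} |u|^{n-1}}{(1+|R-u|)^{N}} du,
\]
where $R:=\La\rho$. After this reduction the statement becomes the one-parameter estimate
\[
I(R)\,:=\,\int_{-\infty}^\infty \frac{(1+|u|)^{-\frac{n-1}{2}} |u|^{n-1}}{(1+|R-u|)^{N}} du
\,\lcs{n}\, \begin{cases} R^{\frac{n-1}{2}} & R\ge 1,\\ 1 & R\le 1,\end{cases}
\]
which I would establish by splitting the $u$-line into three regions depending on where $|u|$ sits relative to $R$.

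The key observation is that the polynomial factor $(1+|u|)^{-\frac{n-1}{2}}|u|^{n-1}$ is a symmetric weight that behaves like $|u|^{n-1}$ for $|u|\le 1$ and like $|u|^{\frac{n-1}{2}}$ for $|u|\ge 1$, while the kernel $(1+|R-u|)^{-N}$ is concentrated in a unit neighborhood of $u=R$ and (since $N>\frac{n+1}{2}$) decays fast enough to make its $|u|^{\frac{n-1}{2}}$-weighted tail integrable. In the regime $R\ge 1$, I would decompose $I(R)$ into the contributions from $|u|\le R/2$, $R/2\le|u|\le 2R$, and $|u|\ge 2R$. On the middle band $|u|\approx R$, the polynomial factor is comparable to $R^{\frac{n-1}{2}}$ and $\int(1+|R-u|)^{-N}du$ is $O(1)$, giving the desired bound. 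On $|u|\le R/2$ one has $|R-u|\ge R/2$ so that $(1+|R-u|)^{-N}\lc R^{-N}$, and integrating the polynomial factor over this range produces $R^{-N+\frac{n+1}{2}}$, which is $\lc 1\lc R^{\frac{n-1}{2}}$ since $N>\frac{n+1}{2}$ and $R\ge 1$. On $|u|\ge 2R$, the inequality $|R-u|\ge |u|/2$ lets me absorb the kernel into $(1+|u|)^{-N}$, and $\int_{2R}^\infty u^{\frac{n-1}{2}-N}du\lc R^{\frac{n+1}{2}-N}\lc 1$.

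In the regime $R\le 1$, I would split simply into $|u|\le 2$ and $|u|\ge 2$. On $|u|\le 2$ the polynomial factor is $O(1)$ and the kernel is integrable, so that part is $O(1)$. On $|u|\ge 2$ we again have $|R-u|\ge |u|/2$ (since $R\le 1\le |u|/2$), so $(1+|R-u|)^{-N}\lc (1+|u|)^{-N}$ and the integral reduces to $\int_{|u|\ge 2}|u|^{\frac{n-1}{2}-N}du$, which converges precisely because $N>\frac{n+1}{2}$.

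There is no real obstacle; the only point to be watchful of is the threshold $N>\frac{n+1}{2}$, which is exactly what is needed to make the tails $\int|u|^{\frac{n-1}{2}-N}du$ convergent in both regimes. Once $I(R)$ is controlled as above, multiplication by the overall $\La^{-n}$ from the substitution gives the stated bounds $\La^{-\frac{n+1}{2}}\rho^{\frac{n-1}{2}}$ when $\La\rho\ge 1$ and $\La^{-n}$ when $\La\rho\le 1$.
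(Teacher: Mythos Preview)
Your argument is correct. The substitution $u=\La v$ cleanly reduces the lemma to the one-parameter estimate for $I(R)$, and your three-region decomposition for $R\ge 1$ and two-region decomposition for $R\le 1$ handle all cases; the condition $N>\tfrac{n+1}{2}$ is used exactly where you say, to make the tail $\int |u|^{\frac{n-1}{2}-N}\,du$ converge and to control the far-field piece $R^{-N+\frac{n+1}{2}}\le 1$. The paper itself omits the proof of this lemma, so there is no alternative argument to compare against; your treatment is exactly the kind of routine splitting the authors presumably had in mind.
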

We omit the proof. Lemma \ref{integralobs}
 will  usually be
applied after using integration by parts with respect to the $s$-variable, with the
parameters $n=d_2$ and $\La=\la k$.

\subsection{\it Estimates for $|u|\lesssim (k+1)^{-1}\lambda^{-1}$}
We begin by proving an $L^1$ bound for   the part of the kernels
$K_{\la}^{k,l}$ for which the terms $\bess (4s \la  t|u| )$ have no
significant oscillation, i.e. for the region where $|u|\le C (\la
k)^{-1}$ (or $|u|\lc \la^{-1}$ if $k=0$).

\begin{lemma} \label{smallulemmakl}Let $\la\ge 1$,  $k\ge 1$, $l\ge 1$. Then
\Be \label{Kklsmallu}
\iint_{|u|\lc (\la k)^{-1}} |\la^{-\frac{d-1}{2}}K^{k,l}_\la(x,u)|
dx\, du
\lc
(2^l k)^{-1}\la^{1-\frac{d}{2}}.
\Ee
\end{lemma}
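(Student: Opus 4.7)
My plan is to combine iterated integration by parts in $s$ (which gives good $x$-decay away from the ``shell'' $\{|x|^2\sim (k 2^l)^{-1}\}$) with a two-dimensional $(s,t)$-IbP on the shell itself (producing the extra $\la^{-d_1/2}$ decay needed when $d_1>2$). Since $|u|\lc(\la k)^{-1}$ and $t\sim k\pi$ on $J_{k,l}$, the Bessel argument $4s\la t|u|$ stays $O(1)$, so $\bess$ and its $s$- and $t$-derivatives are $O(1)$ on the support of the integrand in \eqref{Klaklrep}. Only the side of $J_{k,l}$ on which $\alpha(t):=t\cot t$ is positive contributes; on the opposite side $|1-|x|^2\alpha(t)|\ge 1$ uniformly in $x$, making any contribution $O(\la^{-N})$ via iterated $s$-IbP.

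Off the shell (for $|x|^2\lc c(k 2^l)^{-1}$ or $|x|^2\gc C(k 2^l)^{-1}$), one has $|\psi(t,|x|)|\gc\max(1,|x|^2 k 2^l)$ uniformly in $t\in J_{k,l}^+$. Iterated $s$-IbP ($s$-derivatives of $\beta_\la$ and $\bess$ being uniformly bounded) yields $|\la^{-(d-1)/2}K^{k,l}_\la(x,u)|\lc \fC_{\la,k,l}\int_{J_{k,l}}(1+\la|\psi(t,|x|)|)^{-N}dt$; integrating this over the relevant $x$-range and the $u$-ball $|u|\lc(\la k)^{-1}$ yields a contribution $\lc (2^l k)^{-1}\la^{1-d/2}$ for any $N\ge d_1/2+1$.

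On the shell $|x|^2\sim (k 2^l)^{-1}$, the joint phase $\Phi(s,t):=\la s\psi(t,|x|)$ satisfies $|\nabla_{s,t}\Phi|\sim \la 2^l$ (since $\partial_t\Phi=-\la s|x|^2\alpha'(t)$ and $|\alpha'(t)|=|\cot t-t\csc^2 t|\sim k 2^{2l}$ on $J_{k,l}$). I would apply the 2D IbP operator $\cL a=\mathrm{div}(a\nabla\Phi/|\nabla\Phi|^2)$ a total of $M=d_1/2$ times. By Lemma \ref{iterateddiffop}, the amplitude derivatives (each losing a factor $O(2^l)$ from $\eta_l'$ or $\partial_t(t/\sin t)^{d_1/2}$) and the cross terms $\partial^\beta\Phi/|\nabla\Phi|^{|\beta|}$ (each $\lc \la^{1-|\beta|}$, using $|\alpha^{(n)}|\lc k 2^{(n+1)l}$) balance against $|\nabla\Phi|^{-1}\sim(\la 2^l)^{-1}$ to give a net factor $\la^{-1}$ per application, so $|\cL^M a|\lc \|a\|_\infty\la^{-d_1/2}$. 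This yields $|\la^{-(d-1)/2}K^{k,l}_\la|\lc 2^{-l}\fC_{\la,k,l}\la^{-d_1/2}$ on the shell; multiplying by the $x$-shell volume $\sim (k 2^l)^{-d_1/2}$ and the $u$-ball volume $(k\la)^{-d_2}$, and inserting $\fC_{\la,k,l}=\la^{1+d_2/2}k^{d_2-1}(2^l k)^{d_1/2}$, gives exactly $(2^l k)^{-1}\la^{1-d/2}$. The main technical obstacle is the careful power-counting in the $M$-fold 2D $\cL$-iteration, which produces the extra $\la^{-d_1/2}$ (accounting for the factor $\la^{(d_1-2)/2}$ discrepancy between a naive $s$-IbP-only estimate and the target, a discrepancy that vanishes exactly when $d_1=2$).
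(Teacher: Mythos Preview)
Your approach is correct and yields the stated bound, but it is considerably more elaborate than the paper's argument. The paper splits at the $\la$-dependent scale $|x|\sim(\la k 2^l)^{-1/2}$ rather than at your $\la$-independent shell $|x|\sim(k2^l)^{-1/2}$. For $|x|\le C(\la k 2^l)^{-1/2}$ it simply integrates the trivial pointwise bound $2^{-l}\fC_{\la,k,l}$ over a region of volume $(\la k2^l)^{-d_1/2}(\la k)^{-d_2}$, which already gives $(2^lk)^{-1}\la^{1-d/2}$. For $|x|\ge C(\la k 2^l)^{-1/2}$ it uses \emph{only} $t$-integration by parts: since $|\psi_t(t,|x|)|\gc k2^{2l}|x|^2$ holds uniformly on $J_{k,l}$ (the $t$-derivative of the phase never vanishes there), each $t$-IbP gains a factor $(\la k2^l|x|^2)^{-1}$, which is $<1$ precisely in this range. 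No $s$-IbP and no shell decomposition are needed.

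The conceptual difference is that you organize the argument around where $\psi$ vanishes (forcing you to switch from $s$-IbP to your 2D $\cL$-operator on the shell), whereas the paper exploits that $\psi_t$ is uniformly large on $J_{k,l}$, so one-variable $t$-IbP handles everything except a tiny ball near $x=0$. In particular, your 2D IbP on the shell is effectively $t$-IbP in disguise: on the shell $|\partial_t\Phi|\sim\la 2^l$ dominates $|\partial_s\Phi|=\la|\psi|=O(\la)$, so the $s$-component of $\nabla\Phi$ plays no role in your power-counting. Your argument is sound, but the paper's route avoids the bookkeeping of Lemma~\ref{iterateddiffop} entirely for this lemma.
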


\begin{proof}
First we integrate the pointwise  bound
\eqref{Claklinfty}
over the region where
$|x|\le(\la  k 2^l)^{-1/2}$, $|u|\le (\la k)^{-1}$
and obtain
\begin{multline*}
\iint_{\substack{
|x|\le C (\la  k 2^l)^{-1/2}\\|u|\le C (\la k)^{-1}}}
|\la^{-\frac{d-1}{2}}K^{k,l}_\la(x,u)| dx\, du \\
\lc
2^{-l} \fC_{\la,k,l} (\la k 2^l)^{-d_1 /2} (\la k)^{-d_2}
=(2^l k)^{-1}\la^{1-\frac{d_1+d_2}{2}}.
\end{multline*}
If  $|x|\ge C (\la k 2^{l})^{-1/2}$ then from \eqref{psitalt}, \eqref{higherpsider} we get that $|\psi_t(t,|x|)|
  \gc 2^{2l} k  |x|^2$ on the support of $\eta_l(t-k\pi)$, moreover
$(\partial/\partial t)^{(n)} \psi(t,|x|)
=O(|x|^2 k 2^{l(n+1)})$.
The $n$th $t$-derivative of $\eta_l(t-k\pi) \bess(4s\la t|u|)$ is
$O(2^{ln})$. Thus an integration by parts gives
$$\la^{-\frac{d-1}{2}} |K^{k,l}_\la (x,u)|\le C_N
2^{-l}\fC_{\la,k,l} (\la 2^l k |x|^2)^{-N}
$$ for
$|x|\ge (\la k 2^{l})^{-1/2}$  and
$|u|\le (\la k)^{-1}$.
The bound
$O((2^l k)^{-1}\la^{1-\frac{d}{2}})$ follows by integration by parts.
\end{proof}

\medskip

\subsection{\it Estimates for  $|u|\gg  (k+1)^{-1}\lambda^{-1}$}

We now proceed to give $L^1$ estimates for the kernels $A^{k,l}_\la$
and $B^{k,l}_\la$ for $k\ge 1$, in the region where $|u|\gg (k\la)^{-1}$.

\subsubsection{An estimate for small $x$}
As a first application we prove  $L^1$ estimates  for
$|x|\lc (2^l \la k)^{-1/2}$,  $k\ge 1$.

\begin{lemma}\label{smallx}
Let $C\ge 1$. Then
\Be\label{smallxest}
\iint\limits_{\substack {(x,u):\\|x|\le C
(2^l\la k)^{-1/2}}}
\big[|A_{\la}^{k,l}(x,u)| + |B_{\la}^{k,l}(x,u)|\big]\, dx\, du
\lc\cin C
(2^l k)^{-1} \la^{-\frac {d_1-1}2 }.
\Ee
\end{lemma}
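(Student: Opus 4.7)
The plan is to exploit that in the integrands of \eqref{Adefin} and \eqref{Bdefin} the variable $s$ enters only linearly in the phase $\la s(\psi(t,|x|)\mp 4t|u|)$, so repeated integration by parts in $s$ is painless: each step produces a factor $(\la|\psi\mp 4t|u||)^{-1}$ and the only amplitude effect comes from derivatives of $\vpi_{1,2}$, which lie in $S^{-(d_2-1)/2-N}$ so that the prefactors $(\la t|u|)^N$ are exactly compensated.

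I would first record the key smallness estimate on the domain of integration: for $|x|\le C(2^l\la k)^{-1/2}$ and $t\in J_{k,l}$ one has $|\sin t|\approx 2^{-l}$ and $t\approx k\pi$, so $|t\cot t|\lc k 2^l$ and
$$|x|^2|t\cot t|\lc \la^{-1},\qquad \psi(t,|x|)=1+O(\la^{-1}).$$
The weights $(t/(k2^l\sin t))^{d_1/2}$ and $(t/k)^{d_2-1}$ appearing in \eqref{etadefinitionkl} are $O(1)$ with all $t$-derivatives $O(2^{ln})$ on $J_{k,l}$, so in the pointwise estimates below they behave like constants.

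The kernel $B_\la^{k,l}$ is then easily disposable: its phase $s$-derivative $\la(\psi+4t|u|)$ is $\gc \la$ uniformly (since $\psi\approx 1$ and $|u|\ge 0$), and iterated integration by parts in $s$, combined with the Bessel-symbol estimate $|\vpi_2^{(j)}(\sigma)|\lc \sigma^{-(d_2-1)/2-j}$, gives the pointwise bound
$$|B_\la^{k,l}(x,u)|\lc_N \fC_{\la,k,l}\cdot 2^{-l}\cdot\la^{-N}\cdot (\la k|u|)^{-(d_2-1)/2},$$
whose integral over the $x$-ball of measure $\approx (2^l\la k)^{-d_1/2}$ and over $|u|\gc (\la k)^{-1}$ is negligible for $N$ large.

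The main work is the estimate for $A_\la^{k,l}$. Performing $N$-fold integration by parts in $s$ with phase derivative $h(t):=\la(\psi(t,|x|)-4t|u|)$, and using Leibniz together with $|\vpi_1^{(j)}(\sigma)|\lc \sigma^{-(d_2-1)/2-j}$ to show that every term is bounded by $(\la k|u|)^{-(d_2-1)/2}$, I would arrive at
$$|A_\la^{k,l}(x,u)|\lc_N \fC_{\la,k,l}\int_{J_{k,l}}\bigl(1+\la|\psi(t,|x|)-4t|u||\bigr)^{-N}(\la k|u|)^{-(d_2-1)/2}\,dt.$$
Replacing $\psi(t,|x|)$ by $1$ (with $O(\la^{-1})$ error absorbed) and $t$ by a representative value $t_*\approx k\pi$ on $J_{k,l}$, the $t$-integral contributes an extra factor $2^{-l}$, giving
$$|A_\la^{k,l}(x,u)|\lc_N \fC_{\la,k,l}\cdot 2^{-l}\bigl(1+\la|1-4k\pi|u||\bigr)^{-N}(\la k|u|)^{-(d_2-1)/2}.$$

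Finally I would integrate. Passing to polar coordinates $|u|=\rho$ in $\bbR^{d_2}$ and substituting $w=4k\pi\rho$, the $\rho$-integral becomes
$$k^{-(d_2+1)/2}(\la k)^{-(d_2-1)/2}\int_0^\infty (1+\la|1-w|)^{-N}w^{(d_2-1)/2}\,dw\,\approx\, k^{-(d_2+1)/2}(\la k)^{-(d_2-1)/2}\la^{-1}.$$
Multiplying by the volume $(2^l\la k)^{-d_1/2}$ of the $x$-ball and unwinding $\fC_{\la,k,l}=\la^{1+d_2/2}k^{d_2-1}(2^lk)^{d_1/2}$, the powers of $\la$, $k$ and $2^l$ collapse exactly to $(2^lk)^{-1}\la^{-(d_1-1)/2}$, as required. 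No single step is conceptually hard; the only delicate point is the bookkeeping of the Leibniz expansion after the $N$-fold integration by parts in $s$, verifying that each $(4\la t|u|)^j\vpi_1^{(j)}(4\la st|u|)$ is $O((\la k|u|)^{-(d_2-1)/2})$ uniformly in $j\le N$, which is immediate from the symbol class of $\vpi_1$.
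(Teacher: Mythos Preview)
Your strategy—integration by parts in $s$, then integrating over $(x,u)$—matches the paper's, and the final arithmetic is correct. But the displayed pointwise bound
\[
|A_\la^{k,l}(x,u)|\lc_N \fC_{\la,k,l}\cdot 2^{-l}\bigl(1+\la|1-4k\pi|u||\bigr)^{-N}(\la k|u|)^{-(d_2-1)/2}
\]
is not justified as stated. Replacing $\psi$ by $1$ is fine (error $O_C(\la^{-1})$), but replacing $4t|u|$ by $4k\pi|u|$ inside $(1+\la|\cdot|)^{-N}$ introduces an error of size $4|t-k\pi|\,|u|\lc 2^{-l}|u|$. In the dominant range $|u|\approx (4k\pi)^{-1}$ this is $\approx 2^{-l}/k$, which in the principal regime $2^lk\ll\la$ is much larger than $\la^{-1}$ and cannot be absorbed. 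Concretely, for $|u|$ with $|1-4k\pi|u||\approx 2^{-l}/k$ there exists $t\in J_{k,l}$ for which $\psi(t,|x|)-4t|u|=O(\la^{-1})$, so the integrand equals $\approx 1$ at that $t$, while your claimed bound gives $\approx (\la 2^{-l}/k)^{-N}\ll 1$.

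The fix is to reverse the order of integration: keep $t$ fixed and integrate in $u$ first. For each $t\in J_{k,l}$ the substitution $w=4t\rho$ (with $\rho=|u|$) gives
\[
\int_0^\infty \bigl(1+\la|\psi(t,|x|)-4t\rho|\bigr)^{-N}(\la k\rho)^{-\frac{d_2-1}{2}}\rho^{d_2-1}\,d\rho\ \lc\ k^{-d_2}\la^{-\frac{d_2+1}{2}},
\]
uniformly in $t\in J_{k,l}$ (since $t\approx k\pi$ and $\psi\approx 1$). The $t$-integral then contributes the trivial factor $2^{-l}$, and the remainder of your computation goes through unchanged. This is exactly what the paper does, phrasing the $u$-integral estimate as an instance of Lemma~\ref{integralobs}.
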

\begin{proof}
Integration by parts with respect to $s$ yields
\begin{multline}\label{intbyparts}
|A_{\la}^{k,l}(x,u)|+|B_{\la}^{k,l}(x,u)|
\\ \lc_N
\sum_\pm
\frac{\fC_{\la,k,l}}{(1+\la k|u|)^{\frac{d_2-1}{2}}}
\int_{|t-k\pi|\lc 2^{-l}}
(1+\la k\big| \pm |4u|-|x|^2 \cot \mut +\mut^{-1}\big|)^{-N} d\mut.
\end{multline}
We first integrate  in $u$.
Notice that by
Lemma \ref{integralobs}
we have  for
 fixed $t$ and fixed $r\le
(2^l\la k)^{-1/2}$
$$\int_0^\infty \frac{(1+\la kv)^{-\frac{d_2-1}{2}}v^{d_2-1}}
{(1+\la k\big| \pm |v|-r^2 \cot \mut +\mut^{-1}\big|)^{N}}
dv \lc \la^{-\frac{d_2+1}2}k^{-d_2}.
$$
We integrate in $x$ over a set of measure
$\lc (2^lk\la)^{-d_1/2}$  and then in $t$ (over an interval of
length $\approx 2^{-l}$) and \eqref{smallxest}  follows.
\end{proof}


\subsubsection{$L^1$-bounds for $B_\lambda^{k,l}$}

\begin{lemma}\label{Bkllemma}
For  $\la\ge 1$, $0<k\le 8\la$ ,
\Be \label{Bklestimate}
\big\|B_{\la}^{k,l}\big\|_1 \lc
(2^l k)^{-1} \la^{-\frac {d_1-1}2 } .\Ee
 \end{lemma}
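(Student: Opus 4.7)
The plan is to split the $(x,u)$-integration into regions and apply integration by parts in different directions on each. The range $|x|\le C_0(\la k 2^l)^{-1/2}$ is already covered by Lemma \ref{smallx}, so it suffices to estimate the contribution from $|x|\ge C_0(\la k 2^l)^{-1/2}$. On this set I would further distinguish the two subregions $|u|\le |x|^2 2^l$ and $|u|\ge |x|^2 2^l$: on the first, the $t$-derivative of the phase is dominant, and on the second the $s$-derivative is dominant (and does not vanish, in contrast to the $A$-kernel).

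On the subregion $|u|\le |x|^2 2^l$, I would integrate by parts in $t$ using that the phase $\Phi(s,t)=\la s(\psi(t,|x|)+4t|u|)$ has $\Phi_t=\la s\,\phi_t^+$ with $\phi_t^+=\psi_t+4|u|$. From \eqref{psitalt} and the fact that $|\sin t|\asymp 2^{-l}$ on $J_{k,l}$, one has $\psi_t\gtrsim |x|^2 k 2^{2l}$, so $\phi_t^+\gtrsim |x|^2 k 2^{2l}$. The bounds in \eqref{higherpsider} give $|\partial_t^n\phi_t^+|\lc 2^{ln}\phi_t^+$, while each derivative of $\eta_l(t-k\pi)$ costs $2^l$ and each derivative of $\vpi_2(4\la st|u|)$ is $O(1)$. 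A Leibniz expansion as in the proof of Lemma \ref{iterateddiffop} then yields, after $N$ integrations by parts,
\[
|B_\la^{k,l}(x,u)|\lc \fC_{\la,k,l}\cdot 2^{-l}(\la|x|^2 k 2^l)^{-N}(\la k|u|)^{-(d_2-1)/2}.
\]
Integrating against $|u|^{d_2-1}d|u|$ over $|u|\in[(\la k)^{-1},C|x|^2 2^l]$ produces a factor $(\la k)^{-(d_2-1)/2}(|x|^2 2^l)^{(d_2+1)/2}$; then integrating $|x|^{d_1-1}d|x|$ over $|x|\ge (\la k 2^l)^{-1/2}$ (with $N>(d+1)/2$) contributes $(\la k 2^l)^{-(d+1)/2}2^{l(d_2+1)/2}$. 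Multiplying by $\fC_{\la,k,l}\cdot 2^{-l}$ and tracking powers gives the contribution $\lc \la^{1-d/2}(2^l k)^{-1}$.

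On the subregion $|u|\ge |x|^2 2^l$, I would instead integrate by parts in $s$: since $|\psi(t,|x|)|\lc |x|^2 k 2^l\lc k|u|$ while $4t|u|\asymp k|u|$, the bound $|\Phi_s|=\la|\psi+4t|u||\gtrsim \la k|u|$ holds on the $t$-support. Because $\Phi_{ss}=0$ and because $\partial_s$ applied to $\eta_{\la,k,l}(s,t)\vpi_2(4\la st|u|)$ produces only $O(1)$ factors (using $s\asymp 1$ and the symbol bounds on $\vpi_2$), iterating $N$ integrations by parts gives
\[
|B_\la^{k,l}(x,u)|\lc \fC_{\la,k,l}\cdot 2^{-l}(\la k|u|)^{-N-(d_2-1)/2}.
\]
Integrating $|x|^{d_1-1}d|x|$ over the effective range $|x|\le (|u|/2^l)^{1/2}$ yields $(|u|/2^l)^{d_1/2}$; the subsequent $|u|$-integration over $|u|\ge (\la k)^{-1}$ converges for $N>(d+1)/2$ to yield, once more, the bound $\lc \la^{1-d/2}(2^l k)^{-1}$. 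Since $d_2\ge 1$ implies $\la^{1-d/2}\le \la^{-(d_1-1)/2}$, adding the two subregions and Lemma \ref{smallx} gives the assertion. The main technical obstacle is the careful bookkeeping of iterated integration by parts—in particular, checking on $J_{k,l}$ that the Leibniz-type expansion of $\cL^N$ does not generate any term whose derivative cost outruns the $2^{lN}$ budget, and confirming the lower bounds $\phi_t^+\gtrsim |x|^2 k 2^{2l}$ and $|\Phi_s|\gtrsim \la k|u|$ uniformly on their respective subregions.
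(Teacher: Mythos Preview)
Your overall strategy is sound and yields the correct bound, but it differs from the paper's argument, and one step needs a small correction.

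\textbf{Comparison with the paper.} The paper does not split according to the relative size of $|u|$ and $|x|^2 2^l$. Instead it decomposes dyadically in $|x|$, setting $|x|\approx 2^m(2^l\la k)^{-1/2}$, and on each shell performs \emph{both} $t$- and $s$-integration by parts. The point is that for the $B$-kernel one has $\partial_t(\psi+4t|u|)=\psi_t+4|u|$ with both summands nonnegative, so $|\phi_t^+|\gtrsim 2^{2l}k|x|^2+4|u|\gtrsim 2^{2m+l}\la^{-1}$ uniformly in $u$; hence the $t$-integration by parts alone already produces the full gain $2^{-2mN_1}$. The subsequent $s$-integration by parts then gives decay $(1+\la k|t^{-1}-|x|^2\cot t+4|u||)^{-2N_2}$, which feeds into Lemma~\ref{integralobs} for the $v$-integration. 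Your approach instead trades this unified treatment for a case split, doing only $t$-IBP when $|u|$ is small relative to $|x|^2 2^l$ and only $s$-IBP otherwise. Both routes work; the paper's is a bit more streamlined since it avoids the case distinction, while yours is more elementary in that it never needs Lemma~\ref{integralobs}.

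\textbf{A gap in the second subregion.} Your justification of $|\Phi_s|\gtrsim \la k|u|$ on $\{|u|\ge C|x|^2 2^l\}$ is not quite right: you assert $|\psi(t,|x|)|\lc |x|^2 k 2^l$, but $\psi(t,r)=1-r^2 t\cot t$ contains the constant term $1$, which is \emph{not} $\lc |x|^2 k 2^l$ when $|x|$ is near the lower threshold $(\la k 2^l)^{-1/2}$. Fortunately the conclusion survives: since $1>0$ and $4t|u|>0$ have the same sign, one has
\[
\psi(t,|x|)+4t|u|\,=\,1+4t|u|-|x|^2 t\cot t\,\ge\,4t|u|-C'|x|^2 k 2^l\,\gtrsim\, k|u|
\]
on $J_{k,l}$, provided the constant $C$ in $|u|\ge C|x|^2 2^l$ is chosen large enough (using $4t|u|\approx 4\pi k|u|$ and $|t\cot t|\lc k2^l$). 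So the lower bound $|\Phi_s|\gtrsim \la k|u|$ holds, but for the reason that the $1$ only helps, not because $|\psi|$ is small. With this correction your argument goes through and your final power-counting is accurate.
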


\begin{proof}
The bound for the region
with  $|x|\lc(2^l\la k)^{-1/2}$
(for which there is no significant oscillation in the $t$ integral) is
proved in Lemma \ref{smallx}.

Consider the region where $|x|\approx 2^m (2^l\la k)^{-1/2}.$
We perform $N_1$ integration by parts in $t$
followed by $N_2$ integrations by parts with respect to $s$.
Denote by $\cL_t$ the operator defined by
 $\cL_t g=\partial_t (\tfrac{g(t)}{\psi_t(t,|x|)+4|u|})$. Then
\begin{multline*}B^{k,l}_\la(x,u)=\fC_{\la,k,l}(i/\la)^{N_1} \times
\\
\iint
 e^{i\la s(\psi(t, |x|)+4t|u|)}  \frac{(I-\partial_s^2)^{N_2}\big[
s^{-N_1} \cL_t^{N_1}\{  \eta_{\la,k,l}(s,t)
\vpi_2(4\la st|u|)\,\}\big]}
{(1+ \la^2|\psi(t,|x|)+4t|u||^2)^{N_2}}
  dt \,ds
\end{multline*}

From \eqref{psitalt},
$$|\partial_t (\psi(t,|x|)+4t|u|)| \gc 2^{2l}k|x|^2+4|u| \gc 2^{2m+l}\la^{-1}.$$
Moreover, for $\nu\ge 2$,  $\partial_t^\nu \psi = O(2^{2m+l\nu}\la^{-1})$ and
$\nu$ differentiations of the amplitude
produce  factors of $2^{l\nu}$.
Thus
we obtain the bound
\begin{multline*}|B_{\la}^{k,l}(x,u)|\lc
\frac{\fC_{\la,l,k}}
{(1+4\la k|u|)^{\frac{d_2-1}{2}}}
2^{-2m N_1}\quad \times\\
\int_{|t-k\pi|\lc 2^{-l}}
(1+\la k\big||t^{-1}-|x|^2 \cot t +4|u||)^{-2N_2} \,dt.
\end{multline*}
From Lemma \ref{integralobs} (with $n=d_2$, $\La=\la k$, $\rho\lc k^{-1}
\max\{1, 2^{2m}\la^{-1}\}$)
\begin{multline}\label{vintegralfixedt}
\int_{v=0}^\infty\frac{(1+\la kv)^{-\frac{d_2-1}{2}}v^{d_2-1}}
{(1+\la k\big|v-|x|^2 \cot t +\mut^{-1}\big|)^{N}}
dv\\
\lc \la^{-\frac{d_2+1}{2}} k^{-d_2} \max\{ 1,
(2^{2m}\la^{-1})^{\frac{d_2-1}{2}}\}.
\end{multline}
We integrate in $t$ over an interval of length $O(2^{-l})$ and in $x$
over the annulus $\{x: |x|\approx 2^m (2^l\la k)^{-1/2}\}$.
This gives
\begin{align}\label{Bklinannulus}
&\iint_{\substack{(x,u):\\
|x|\approx 2^{m}(2^l\la k)^{-1/2} }}
|B_{\la}^{k,l}(x,u)| dx\, du\,
\\& \lc \, 2^{-2m N}  \,2^{-l} \Big(\frac{2^{m}}{\sqrt{ 2^l \la k}}\Big)^{d_1}
\fC_{\la,k,l}
\la^{-\frac{d_2+1}{2}} k^{-d_2} \max\{ 1,
(2^{2m}\la^{-1})^{\frac{d_2-1}{2}}\}
\notag
\\
&\lc
(2^{l}k)^{-1} \la^{-\frac {d_1-1}2 } 2^{-m(2N-d_1)}\max\{ 1,
(2^{2m}\la^{-1})^{\frac{d_2-1}{2}}\}
\notag
\end{align}
and choosing $N$ sufficiently large the lemma follows by summation in $m$.
\end{proof}

\subsubsection{$L^1$-bounds for
$A^{k,l}_\la$, $2^lk\ge10^5\la$}
\begin{lemma}\label{kgrlalemma}
For $k\le 8\la$, $2^l\ge 10^5\la/k$,
\Be \label{kgrla}
\big\|A_{\la}^{k,l}\big\|_1 \lc
(2^l k)^{-1} \la^{-\frac {d_1-1}2 } .\Ee
 \end{lemma}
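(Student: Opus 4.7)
The plan is to build on Lemma~\ref{smallx}, which already yields the claimed bound for the subregion $|x| \le C(2^l\la k)^{-1/2}$, and to handle the complementary range via integration by parts in~$t$ (supplemented by a van der Corput estimate near stationary points). On the support of $\eta_l(t-k\pi)$ one has $|\sin t| \approx 2^{-l}$, and the formulas \eqref{psit}, \eqref{psitt}, \eqref{higherpsider} give
\[
|\psi_t(t,|x|)| \approx |x|^2 k\, 2^{2l}, \qquad |\psi_{tt}(t,|x|)| \approx |x|^2 k\, 2^{3l}, \qquad |\partial_t^n \psi| \lc |x|^2 k\, 2^{l(n+1)};
\]
differentiating the amplitude $\eta_{\la,k,l}(s,t)\vpi_1(4\la st|u|)$ in~$t$ costs at most a factor $2^l$ per derivative. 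The hypothesis $2^l k \ge 10^5\la$ combined with $k \le 8\la$ forces $2^l \gtrsim 10^4$, which is crucial: it guarantees that $\la |x|^2 k\,2^l \ge 1$ as soon as $|x|^2 \ge (2^l\la k)^{-1}$, so that $t$-integration by parts is profitable throughout the outer range.

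I would then dyadically split the remaining $|x|$-range into shells $|x| \approx 2^m(2^l\la k)^{-1/2}$, $m \ge 0$, and within each shell introduce a dichotomy in~$u$. In the \emph{off-critical} subregion, where $|u| \le c|\psi_t|$ or $|u| \ge C|\psi_t|$ for suitable $c<C$, the phase derivative $\Phi_t = \la s(\psi_t - 4|u|)$ satisfies $|\Phi_t|\cdot 2^{-l} \gtrsim \la|x|^2 k\,2^l$ or $|\Phi_t|\cdot 2^{-l} \gtrsim \la|u|\,2^{-l}$ respectively, and iterating \eqref{repeatedinbyparts} with $\cL$ as in \eqref{diffopdef} (controlled term by term via Lemma~\ref{iterateddiffop}) produces arbitrary polynomial decay. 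In the \emph{critical} subregion $|u| \approx |\psi_t|/4 \approx 2^{2m+l}/\la$, I would invoke a van der Corput bound using $|\Phi_{tt}| = \la s|\psi_{tt}| \gtrsim \la|x|^2 k\,2^{3l}$ to gain a factor $(\la|x|^2 k\, 2^{3l})^{-1/2}$ on the $t$-integral; the second-derivative estimate is uniform in~$s$ and in~$t$ across the support of $\eta_l$ by \eqref{psittequiv}-style arguments applied near $t=k\pi$.

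The final step combines these pointwise bounds with the Bessel symbol estimate $|\vpi_1(4\la s t |u|)| \lc (1+\la k|u|)^{-(d_2-1)/2}$, integrates out~$u$ using Lemma~\ref{integralobs} (with $\Lambda = \la k$) for the off-critical range and the trivial volume bound $\lc (2^{2m+l}/\la)^{d_2}$ for the critical range, then integrates $|x| \approx 2^m (2^l\la k)^{-1/2}$ over a shell of volume $\lc (2^m(2^l\la k)^{-1/2})^{d_1}$, and sums in~$m$. The main obstacle I expect is the bookkeeping: verifying that the critical-region contribution (for which van der Corput only partially compensates the large prefactor $\fC_{\la,k,l}$) is dominated by the small factor $(2^lk)^{-1}$ forced by the hypothesis $2^l k \ge 10^5\la$, and that the off-critical integration by parts is carried out with $N$ large enough to absorb the growth in~$m$ produced by Lemma~\ref{iterateddiffop}. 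Coordinating the matching of these two regimes at the interface $|u| \approx |\psi_t|/4$ is the technical crux.
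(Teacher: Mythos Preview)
Your treatment of the region $|x|\le C(2^l\la k)^{-1/2}$ and of the small-$v$ off-critical shell (where $4|u|\ll|\psi_t|$) matches the paper's. The genuine gap is in the critical region $4|u|\approx |\psi_t|\approx 2^{2m+l}/\la$: the van der Corput bound in $t$ is too weak by a factor of~$\la$. Concretely, van der Corput yields a $t$-integral of size $(\la|x|^2 k\,2^{3l})^{-1/2}=2^{-m-l}$, and combining this with your trivial $u$-volume $\approx (2^{2m+l}/\la)^{d_2}$, the Bessel decay $(\la k|u|)^{-(d_2-1)/2}\approx(2^{2m+l}k)^{-(d_2-1)/2}$, and the $x$-shell volume $(2^m(2^l\la k)^{-1/2})^{d_1}$ gives a critical-region contribution
\[
\fC_{\la,k,l}\cdot 2^{-m-l}\cdot (2^{2m+l}k)^{-\frac{d_2-1}{2}}\cdot\Big(\frac{2^{2m+l}}{\la}\Big)^{d_2}\cdot\Big(\frac{2^m}{\sqrt{2^l\la k}}\Big)^{d_1}
\;=\;2^{md}\,\la^{1-\frac d2}\,(2^lk)^{\frac{d_2-1}{2}}.
\]
The ratio of this to the target $(2^lk)^{-1}\la^{-(d_1-1)/2}$ equals $2^{md}\,\la^{(1-d_2)/2}(2^lk)^{(d_2+1)/2}$, and the hypothesis $2^lk\ge 10^5\la$ forces this to be \emph{at least} of order $\la\cdot 2^{md}$, not small. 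So the bookkeeping you anticipate cannot close; the hypothesis makes the factor $(2^lk)^{(d_2+1)/2}$ large rather than supplying the smallness you need.

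The idea you are missing is that throughout the range $v=4|u|\gtrsim 2^{2m+l}/\la$ (which already contains your critical region) the phase itself is large: one has $tv\gtrsim k\cdot 2^{2m+l}/\la\ge 10^5\cdot 2^{2m}\gg 1$, while $|x|^2t|\cot t|\approx 2^{2m}/\la$, and since $2^l\gtrsim 10^4$ this second term is at most $v/10$. Hence $|\phi(t,|x|,v)|=|1-|x|^2t\cot t-tv|\gtrsim kv$, uniformly in $t\in J_{k,l}$. Integration by parts in $s$ (not $t$) then yields
\[
|A_\la^{k,l}(x,u)|\;\lc\; 2^{-l}\,\fC_{\la,k,l}\,(\la kv)^{-N_1-\frac{d_2-1}{2}},
\]
which integrates over $u$ and $x$ to give $\la^{1-d/2}(2^lk)^{(d_2-1)/2-N_1}\cdot 2^{m(d+1-2N_1)}$; for $N_1$ large this is far below the target and sums in $m$. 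In short, the hypothesis $2^lk\gg\la$ is an $s$-oscillation phenomenon (it makes $\la|\phi|$ large), not a $t$-oscillation phenomenon, and no stationary-phase or van der Corput argument in $t$ is needed or adequate here.
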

\begin{proof}
We use  Lemma \ref{smallx} to obtain  the appropriate $L^1$ bound in the  region
$\{(x,u): \,|x|\le C_0
(2^l\la k)^{-1/2}\}$. Next,  consider the region where
\Be \label{gain2m}
 2^m (2^l\la k)^{-1/2}\le |x|\le  2^{m+1} (2^l\la k)^{-1/2}
\Ee for large $m$.
This region is then  split into two subregions,
one where $4|u|=v\le 10^{-2} 2^{2m+l}\la^{-1}$
and the complementary region.

For  the region with small $v$ we  proceed as in Lemma \ref{Bkllemma}.
From formula \eqref{psitalt} we have $|\psi_t|\ge kr^22^{2l}/20$
and hence $|\psi_t|\ge 2^{2m+l-5}\la^{-1}$.
Thus if   $v\le 10^{-2}2^{2m+l}\la^{-1}$
then
$|\phi_{\mut}|\approx k2^{2l} r^2  \approx 2^{2m+l}\la^{-1}$.
Moreover  $\partial_t^\nu \phi = O(
2^{2m+l\nu}\la^{-1})$ for $\nu\ge 2$.
Therefore,  if we perform integration  by parts in $\mut$
several times, followed by integrations by parts on $s$,
we obtain the bound
\begin{multline*}|A_{\la}^{k,l}(x,u)|\lc
\frac{\fC_{\la,l,k}}
{(1+\la k|u|)^{\frac{d_2-1}{2}}}
2^{-2m N}\quad \times\\
\int_{|t-k\pi|\lc 2^{-l}}
(1+\la k\big||x|^2 \cot \mut -\mut^{-1}-4|u|\big|)^{-N} d\mut.
\end{multline*}
In the present range $|x|^2|\cot t|\approx
2^{2m}(\la k)^{-1}$ and $t^{-1}\approx k^{-1}$ and thus
we see from Lemma \ref{integralobs} that inequality \eqref{vintegralfixedt}
in the proof of Lemma \ref{Bkllemma} holds. From this we proceed as in
\eqref{Bklinannulus} to bound
\begin{multline*}\iint_{\substack{
|x|\approx 2^{m}(2^l\la k)^{-1/2}
\\
4|u|\le 10^{-2} 2^{2m+l} \la^{-1} }}
|A_{\la}^{k,l}(x,u)| dx du\,\\
\lc
(2^{l}k)^{-1}\la^{-\frac {d_1-1}2 }  2^{-m(2N-d_1)}\max\{ 1,
(2^{2m}\la^{-1})^{\frac{d_2-1}{2}}\} \,.
\end{multline*}
For large  $N_1$  we can sum in $m$ and obtain the
bound
$C(2^{l}k)^{-1}\la^{-\frac {d_1-1}2 }$.


Next assume that
$v\ge 2^{2m+l}\la^{-1}/100$
(and still keep  \eqref{gain2m}).
Then
\Be \label{philowerbdlargev}
|tv+r^2 t\cot t-1|\ge k|v|
\text{ for $t\in \supp( \eta_{\la,k,l})$}\,.\Ee
Indeed, we have $tv\ge 2^{2m} 2^lk\la^{-1}/100 \ge 10^3$
and
$$r^2t|\cot t|\le 2^{2m+2}(2^l\la k)^{-1}
t[\sin(\tfrac{3\pi}{8}2^{-l})]^{-1}  \le
 2^{2m+6}\la ^{-1}\le \frac{2^{2m+l}}{100\la} 2^5 10^2 2^{-l}
$$
where we used \eqref{gain2m} and $\sin \alpha>2\alpha/\pi$ for $0\le \alpha\le \pi/2$.
By  our  assumptions  $2^l\ge 10^5 \la/k>10^4$ and thus the right hand side of the display is $\le v/10$.
Now
\eqref{philowerbdlargev}
is immediate by the triangle inequality.

We use \eqref{philowerbdlargev}  to get from an  $N_1$-fold integration by
parts in $s$
$$|A^{k,l}_\la(x,u)|
\lc 2^{-l}\fC_{\la,l,k}
(\la kv)^{-N_1-\frac{d_2-1}{2}}.
$$
Then
\begin{align*}
&\iint_{\substack{|x|\approx 2^m (2^l \la k)^{-1/2}\\
4|u|\ge 10^{-2} 2^{2m+l}\la^{-1} }}|A^{k,l}_\la(x,u)| \,du \,dx
\\
&\quad \lc 2^{-l} \fC_{\la,l,k} \Big(\frac{2^m}{\sqrt{\la 2^l k}}\Big)^{d_1}
(\la k)^{-N_1-\frac{d_2-1}{2}} \Big(\frac{2^{2m+l}}{\la}\Big)^{-N_1+ \frac{d_2+1}{2}}
\\
&\quad \lc
\la^{1-\frac{d_1}2-\frac{d_2}{2}}2^{-l(N_1-\frac{d_2-1}{2})}k^{\frac{d_2-1}{2}-N_1}
2^{m(d_1+d_2+1-2N_1)}\,.
\end{align*}
For $N_1$ large we may sum in $m$ to finish the proof.
\end{proof}

\subsubsection{Estimates for  $A_\la^{k,l}$, $2^l\lc\la/k$.}
In the early   approaches to prove $L^p$ boundedness for Fourier integral operators the oscillatory integral were analyzed  using the   method of stationary
phase (\cite{peral}, \cite{miyachi}, \cite{beals}).
This creates some difficulties in our case at points where $\phi$, $\phi_t$ and  $\phi_{tt}$ vanish simultaneously, namely at positive $t$ satisfying
 $\tan t=t$.
 To avoid this difficulty we use a decomposition in the spirit of \cite{SSS}.

In what follows we assume
$k\le 8\la$ and $2^l\le C_0\la/k$ for large $C_0$ chosen independently of
$\la, k,l$. The choice  $C_0=10^{10}$ is suitable. We decompose the interval
$J_{k,l}$ into smaller subintervals of length $\eps \sqrt{\frac {k}{2^l\la}}$ (which is $\lc 2^{-l}$ in the range under consideration), here  $\eps \ll 10 ^{-100}$ (to be chosen
sufficiently small but  independent of $\la, k,l$).

This decomposition is motivated by the following considerations: according to \eqref{splitting},
$\la \phi(t,r,v)$ contains the term  $-\la(r-r(t))^2 t\cot t$ depending entirely on $r$ and $t$. For $t\in J_{k,l},$ this is of size $\la k 2^l |r-r(t)|^2,$ hence  of order $O(1)$ if $|r-r(t)|\lesssim (\la k 2^l)^{-1/2}.$
Moreover, on a subinterval $I$ of $J_{k,l}$ on which $r(t)$ varies  by at most  a small fraction of the  same size, the term
 $-\la(r-r(t))^2 t\cot t$ is still $O(1)$ and contributes to no oscillation in the integration with respect to $s.$ Since $|r'(t)|\sim 1/k$ by  $\eqref{rvprime}, $  this suggests to choose intervals $I$ of length $\ll k(\la k 2^l)^{-1/2}=\sqrt{k2^{-l}\la^{-1}}.$ Similarly, the first term  of $\la \phi(t,r,v)$ in \eqref{splitting} is of size $\la k|w(t,r,v)|$ and does not contribute to any oscillation in the integration with respect to $s$  if $|w(t,r,v)|\lesssim (\la k)^{-1}.$  These considerations also motivate our later definitions of the  set $\cP_0$ and the sets $\cP_m, m\ge 1$, \cf. \eqref{cPm}.

\medskip

As before we denote by $\eta_0$ a $C^\infty_0(\bbR)$ function  so that $\sum_{n\in\bbZ} \eta_0(t-\pi n)=1$ and $\supp(\eta_0)\subset (-\pi,\pi)$.
 Define, for
$b\in \pi\eps\sqrt{k2^{-l}\la^{-1}}\,\bbZ$,
\Be\label{defetaklb} \eta_{\la,k,l,b}(s,t)=  \eta_{\la, k,l}(s,t)
\eta_0 \big(\eps^{-1}\sqrt{\tfrac{\la 2^l}{k}}(t-b)\big).
\Ee
Then we may split
\Be\label{bdecomp}A_{\la}^{k,l}=\sum_{b\in \cT_{\la,k,l}} A_{\la,  b}^{k,l}
\Ee
where
 $\cT_{\la,k,l}\subset
\pi\eps\sqrt{ k2^{-l}\la^{-1}}\,
 \bbZ \cap J_{k,l}$ (\cf.\eqref{Jkl}),
 $\#\cT_{\la,k,l}=O(\eps^{-1}\sqrt{\la 2^{-l} k^{-1}}  )$, and
\begin{multline}\label{Blakb}
A_{\la,b}^{k,l}(x,u)=
\\
\fC_{\la,l,k}
\iint \chi(s)
\eta_{\la,k,l,b}(t)
e^{i\la s(1-|x|^2\mut\cot \mut -\mut|4u|)} \vpi_1(\la s\mut |4u|) d\mut ds\,.
\end{multline}

\medskip

We now give some formulas relating the phase
$\phi(t,r,v)=1-r^2\mut\cot \mut -\mut v$
 to the geometry of the curve $(r(t), v(t))$ (\cf.\eqref{rv(t)}).
By \eqref{system} and \eqref{slope},
\begin{align}
&\frac{\phi(t,r,v)}{t}=
\frac{\phi(t,r,v)-\phi(t, r(t),v(t))}{t}
\notag
\\
&=(r(t)^2-r^2) \cot t +v(t)-v
\notag
\\
&= v(t)-v  -\big(r-r(t)\big) 2 r(t) \cot t
-(r-r(t))^2 \cot t
\notag
\end{align}
and, setting
\Be\label{wdefin}w(t,r,v)= v-v(t)- \frac{v'(t)}{r'(t)}(r-r(t))\,,\Ee
we get
\Be
\label{splitting}
\frac{\phi(t,r,v)}{t}= - w(t,r,v)-(r-r(t))^2 \cot t\,.
\Ee

Moreover,
\begin{align} \notag
\phi_t(t,r,v)&=
\frac{\phi(t,r,v)}{t}+
\frac{r^2 t}{\sin^2 t}- \frac 1t
\\&=\frac{\phi(t,r,v)}{t}+\frac{t}{\sin^2 t} (r+r(t))(r-r(t))
\label{phit-splitting}
\end{align}

We shall need  estimates describing  how $w(t,r,v)$ changes in $t$.
Use
\eqref{splitting} and  the expansion
\begin{align*}
&w(t,r,v)- w(b,r,v)\,=\,
- \big[ v(t)-v(b)- \frac{v'(b)}{r'(b)} (r(t)-r(b))\big]
\\& \qquad - \Big[ \frac{v'(t)}{r'(t)}
- \frac{v'(b)}{r'(b)}\Big](r-r(b))
+ \Big[ \frac{v'(t)}{r'(t)} - \frac{v'(b)}{r'(b)}\Big](r(t)-r(b)).
\notag
\end{align*}
From
\eqref{secondrvderbound}
 we get $|r''|+k|v''|\lc 2^{-l}k^{-1}+k^{-2}$ on $J_{k,l}$, thus the first term
in the displayed formula is  $\lc (2^{-l}k^{-2}+k^{-3} )|t-b|^2$.
Differentiating in \eqref{slope}  we also get
$(v'/r')'=
  O(2^{-l}k+ k^{-2})$ on $J_{k,l}$,
and  see that the second term in the display is
$\lc (2^{-l}k^{-1}+k^{-2})
|t-b| |r-r(b)|$ and the third is
$\lc ( 2^{-l}+k^{-1})k^{-2}(t-b)^2$.
Hence
\Be\label{wdiff2}
|w(t,r,v)- w(b,r,v)|\lc
(2^{-l}+k^{-1})|t-b|\Big(\frac{|t-b|}{k^{2}}+ \frac{|r-r(b)|}{k}\Big)\,.
\Ee

We now turn to the estimation of $A^{k,l}_{\la,b}$ with $k\ge 1$ and
$b\in \cT_{\la,k,l}$.
Let, for $b>1/2$, $l=1,2,\dots$, and  $m=0,1,2,\dots$,
\begin{multline}\label{cPm}
\cP_m\equiv\cP_m(\la,l,k;b):=\big\{(r,v)\in (0,\infty)\times (0,\infty):\\
\quad
v\ge (\la k)^{-1},\,\,
 |r-r(b)|\le 2^m (\la k 2^l)^{-1/2}, \,\, |w(b,r,v)|\le 2^{2m}
(\la k)^{-1}  \big\}
\end{multline}
and let
\Be \label{Omm}
\Om_m\equiv\Om_m(\la,l,k;b):=
\begin{cases}
&\{(x,u):\, (|x|, 4|u|)\in \cP_0\}\,,\text{ if } m=0,
\\
&\{(x,u):\, (|x|, 4|u|)\in \cP_m\setminus \cP_{m-1}\} \text{ if } m>0\,.
\end{cases}
\Ee

For later reference we note that in view
$2^l \le \la/k$, $|t-b|\le \eps \sqrt{ \frac{k}{\la 2^l}}$
and the upper bound $|r'(t)|\le 2t^{-1}$ we have
$r(t)-r(b)= O\big(\frac{\eps}{\sqrt{k\la 2^l}}\big),$
and, by \eqref{wdiff2},
\Be\label{wdiffestimate}
|w(t,r,v)- w(b,r,v)|\lc \eps 2^m (\la k)^{-1},\quad (r,v)\in \cP_m.
\Ee
Moreover it is easy to check  that, still for
$|t-b|\le \eps \sqrt{ \frac{k}{\la 2^l}}$,
\Be\label{sectermdiff}
|(r-r(t))^2\cot t- (r-r(b))^2\cot b|
\lc \eps 2^{2m} (\la k)^{-1}.\Ee

\medskip

\begin{prop} \label{vert-hor}
Assume that  $1\le k\le8\la$, $l=1,2,\dots$, and  $2^l\le C_0 \la/k$
(and let $\eps$ in the definition \eqref{defetaklb}
be $\le C_0^{-1} 10^{-100}$).
Let $b\ge 1$ and $b\in \cT_{\la,k,l}$.
Then
\begin{align} \label{corridorzero}
&\iint_{\Om_0(\la, l,k;b)} |A^{k,l}_{\la,b}(x,u)|
\, dx du \, \lc\, (2^lk)^{-\frac{d_1+ 1}2}
\sqrt{\tfrac{2^lk}{\la}}
\\ \label{corridorm}
&\iint_{\Om_{m}(\la, l,k;b)} |A^{k,l}_{\la,b}(x,u)|  \, dx du \,
\lcs{N}\, 2^{-m N} (2^lk)^{-\frac{d_1+ 1}2}
 \sqrt{\tfrac{2^lk}{\la}}\,.
\end{align}
\end{prop}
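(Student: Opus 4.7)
I will prove both bounds by combining a pointwise baseline estimate for $A^{k,l}_{\la,b}$ with repeated integration by parts on $\Om_m\setminus\Om_0$. For the baseline, the Bessel asymptotic gives $|\vpi_1(\la s t v)|\lc (\la k v)^{-(d_2-1)/2}$ (valid since $v\ge(\la k)^{-1}$ on each $\cP_m$, while $s\approx 1$ and $t\approx k$ on the support of $\eta_{\la,k,l,b}$), and the length of the $t$-support of $\eta_{\la,k,l,b}$ is $\eps\sqrt{k/(\la 2^l)}$. Hence
\[|A^{k,l}_{\la,b}(x,u)|\ \lc\ \fC_{\la,k,l}\,\eps\sqrt{k/(\la 2^l)}\,(\la k v)^{-(d_2-1)/2}.\]
For \eqref{corridorzero}, the Lebesgue measure of $\Om_0$ expressed in polar form $r^{d_1-1}v^{d_2-1}dr\,dv$ is comparable to $r(b)^{d_1-1}v(b)^{d_2-1}(\la k 2^l)^{-1/2}(\la k)^{-1}$; since $b\in J_{k,l}$ gives $r(b)\approx 2^{-l}/k$ and $v(b)\approx 1/k$, multiplying the baseline (with $v\approx v(b)$ in the Bessel factor) by $|\Om_0|$ and simplifying with $\fC_{\la,k,l}=\la^{1+d_2/2}k^{d_2-1}(2^lk)^{d_1/2}$ yields exactly $(2^lk)^{-(d_1+1)/2}\sqrt{2^lk/\la}$.

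\medskip

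For $(r,v)\in\cP_m\setminus\cP_{m-1}$ with $m\ge 1$, at least one of $|r-r(b)|>2^{m-1}/\sqrt{\la k 2^l}$ or $|w(b,r,v)|>2^{2(m-1)}/(\la k)$ must hold. The factorization \eqref{splitting} together with \eqref{wdiffestimate} and \eqref{sectermdiff} yields, uniformly on the $t$-support of $\eta_{\la,k,l,b}$,
\[\phi(t,r,v)\ =\ -t\bigl[w(b,r,v)+(r-r(b))^2\cot b\bigr]\ +\ O\bigl(\eps\, 2^{2m}\,t/(\la k)\bigr).\]
I further split $\cP_m\setminus\cP_{m-1}$ into a \emph{noncancelling} subregion where $|w(b,r,v)+(r-r(b))^2\cot b|\gtrsim 2^{2m}/(\la k)$ (so that $|\la\phi|\gtrsim 2^{2m}$ for $\eps$ small) and a complementary \emph{cancellation} subregion. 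On the noncancelling subregion, $N$ repeated integrations by parts in $s$ with operator $\cL a=\partial_s(a/(i\la\phi))$, bookkept via Lemma \ref{iterateddiffop}, gain a factor $2^{-2mN}$ over the baseline; derivatives hitting $\vpi_1(\la s t v)$ are harmless since $\la k v\gtrsim 1$. On the cancellation subregion, cancellation forces $|r-r(b)|\approx 2^m/\sqrt{\la k 2^l}$; using $|\cot b|\approx 2^l$ the second summand $(t/\sin^2 t)(r+r(t))(r-r(t))$ in \eqref{phit-splitting} dominates $\phi/t$, giving $|\phi_t|\gtrsim 2^{m+l/2}/\sqrt{\la k}$. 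Then $N$ integrations by parts in $t$ gain a factor $2^{-mN}$, since each cutoff-derivative contributes $\eps^{-1}\sqrt{\la 2^l/k}$ and pairs with $|\la s\phi_t|^{-1}\approx 2^{-m-l/2}\sqrt{k/\la}$ to produce $\eps^{-1}2^{-m}$ per step; the cross term $\phi_{tt}/\phi_t^2$, using $|\phi_{tt}|\lc 2^l/k$ from \eqref{higherpsider}, contributes terms of the same order.

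\medskip

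In both subregions the baseline pointwise estimate is improved by the factor $\lc_N 2^{-mN}$ for arbitrary $N$. Since the Lebesgue measure of $\Om_m$ exceeds $|\Om_0|$ by at most a factor of $2^{3m}$ (namely $2^m$ from the $r$-width and $2^{2m}$ from the $w$-width in the definition of $\cP_m$), \eqref{corridorm} follows by choosing $N$ large enough to absorb this $2^{3m}$. The main obstacle is the cancellation subregion, where direct $s$-integration by parts fails because $\phi$ can vanish on a curve in $(s,t)$-space; this is precisely the reason $J_{k,l}$ was partitioned at scale $\eps\sqrt{k/(\la 2^l)}$, as this choice makes $\phi_t$ approximately constant in $t$ on each piece and ensures that the lower bound from \eqref{phit-splitting} survives the derivative costs of $t$-integration by parts.
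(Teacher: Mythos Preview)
Your approach is the same as the paper's: a crude size estimate on $\Om_0$, then a split of $\cP_m\setminus\cP_{m-1}$ into a region where $|\phi|$ is large (handled by $s$-integration by parts) and a complementary region where $|r-r(b)|\approx 2^m(\la k 2^l)^{-1/2}$ forces a lower bound on $|\phi_t|$ (handled by $t$-integration by parts). The paper phrases the split as $|\phi(b,r,v)|\gtrless 2^{l-100}k(r-r(b))^2$ rather than your $|w+(r-r(b))^2\cot b|\gtrless c\,2^{2m}/(\la k)$, but these are equivalent after the verification you sketch.

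There is one concrete gap. Your final sentence asserts that the measure of $\Om_m$ exceeds $|\Om_0|$ by at most $2^{3m}$; this is false for large $m$. On $\cP_m$ one only has $r\lc 2^m(2^lk)^{-1}$ and $v\lc 2^{2m}k^{-1}$ (not $r\approx r(b)$, $v\approx v(b)$), so the weighted volume $\int_{\cP_m} r^{d_1-1}(\la k v)^{-(d_2-1)/2}v^{d_2-1}\,dr\,dv$ grows like $2^{m(d_1+d_2+1)}$ rather than $2^{3m}$. This by itself is harmless since $N$ is arbitrary, but the same oversight infects your $\phi_t$ and $\phi_{tt}$ bounds: the estimates $|\phi_t|\gtrsim 2^{m+l/2}/\sqrt{\la k}$ and $|\phi_{tt}|\lc 2^l/k$ are derived assuming $r\approx r(b)$, which fails once $2^m\gc\sqrt{\la/(2^lk)}$. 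In that regime $r\approx r-r(b)\approx 2^m(\la k 2^l)^{-1/2}$, and one gets instead $|\phi_t|\gtrsim 2^{2m+l}/\la$ and $|\phi_{tt}|\lc 2^{2m+2l}/\la$; the ratio $\phi_{tt}/(\la\phi_t^2)$ still comes out $\lc 2^{-2m}$, and the cutoff-derivative cost $\eps^{-1}\sqrt{\la 2^l/k}/(\la|\phi_t|)$ is then $\lc 2^{-m}$ as well, so the bookkeeping still closes. The paper handles this by carrying the factor $(r+r(b))$ in the lower bound for $|\phi_t|$ (see \eqref{phiderivlowerbound}) and checking the two regimes $r\le 2r(b)$ and $r>2r(b)$ separately; you should do the same.
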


\begin{proof}
Note that, for fixed $k\ge 1$, $l\ge 1$, $b\in \cT_{\la,k,l}$,
\Be \label{rvinPm}
(r,v)\in \cP_m \,\implies r\lc 2^m (2^l k)^{-1} \text {and } v\lc
 2^{2m}k^{-1}\,.
\Ee
This is immediate in view of $2^lk\lc\la$, $r(b)\approx (2^{l}k)^{-1}$,
 $v(b)\approx k^{-1}$
and thus
\Be\label{rvsizebounds}
\begin{aligned}&r\lc (2^l k)^{-1}(1+ 2^m \sqrt{\tfrac{k2^l}{\la}})\,
\lc 2^m (2^l k)^{-1}\,,
\\&v\lc k^{-1}(1+2^{2m}\la^{-1})\,\lc 2^{2m}k^{-1}\,.
\end{aligned}
\Ee
Also recall  that  $v=4|u|\ge (\la k )^{-1}$ for $(x,u)\in \Omega_m(\la,l,k;b)$.

A  crude size estimate yields
\Be\label{basicsizeest}
\iint_{(|x|,4|u|)\in \cP_m}  |A^{k,l}_{\la,b}(x,u)|
\, dx \,du \, \lc\,
2^{m(d_1+d_2+1)} (2^{l}k )^{-(d_1+1)/2} \sqrt {\tfrac{2^l k}{\la}}.
\Ee
Indeed, the left hand side is $\lc \eps \sqrt{\tfrac{k}{2^l\la}}\,\fC_{\la,k,l}  \,\cI$ where
$$\cI:=  \iint_{\substack{|r-r(b)|\lc 2^m(2^l \la k)^{-1/2}\\|w(b,r,v)|\lc 2^{2m}(\la k)^{-1}}}
(\la k v)^{-\frac{d_2-1}{2}}
v^{d_2-1} r^{d_1-1} \,dvdr
$$ is
$\lc
 \frac{2^m}{\sqrt{\la 2^l k}}
\big(\frac{2^m}{2^l k}\big)^{d_1-1}
\frac{2^{2m}}{\la k} \big(\frac{2^{2m} k^{-1}}{\la k}\big)^{\frac{d_2-1}{2}},$ in view of \eqref{wdefin} and \eqref{rvsizebounds}.
This yields \eqref{basicsizeest}.
In regard to its dependence on $m$ this bound is nonoptimal and
will be used for  $2^m\le C(\eps)$.


\medskip

We now derive an  improved $L^1$ bound for the region $\Om_m$ when $m$ is large.
For  $(r,v)\in \cP_m\setminus \cP_{m-1}$ we
distinguish two cases $I$, $II$ depending on the size of
$|\phi(b,r,v)|$ and define for $m>0$, and fixed $k,l,b$,
$$\begin{aligned}
\cR_m^I&= \{(r,v)\in \cP_m\setminus \cP_{m-1}:
|\phi(b,r,v)| >  2^{l-100}(r-r(b))^2\,\}\,,
\\
\cR_m^{II}&= \{(r,v)\in \cP_m\setminus \cP_{m-1}:
|\phi(b,r,v)| \le  2^{l-100}(r-r(b))^2\,\}\,.
\end{aligned}
$$
We also have the corresponding decomposition  $\Om_m=\Om_m^I+\Om_m^{II}$ where $\Omega_m^I$
and $\Omega_m^{II}$
consist of those $(x,u)$ with $(|x|, 4|u|)\in \cR_m^I$
and  $(|x|, 4|u|)\in \cR_m^{II}$,
respectively.

\medskip

\noindent{\it Case I:  $|\phi(b,r,v)| \ge 2^{l-100}k(r-r(b))^2$.}
We shall show that
\Be\label{philowbd}
|\phi(t,r,v)| \gc c 2^{2m}\la^{-1},\quad \text{ for }(r,v)\in
\cR_m^{I},
\quad|t-b|\le \eps
\sqrt{\tfrac{k}{2^l\la}}\,.
\Ee
with $c>0$ if $0<\eps\ll 10^{-100}$ is chosen sufficiently small. Given
\eqref{philowbd} we can use  an $N_2$-fold integration by parts in $s$ to  obtain a gain of
$2^{-2m N_2}$ over the above straightforward size estimate \eqref{basicsizeest}, which
leads to
\Be\label{OmIest}
\iint_{\Om_m^{I}}  |A^{k,l}_{\la,b}(x,u)|
\, dx \,du \, \lc_{\eps,N_2}
2^{m(d_1+d_2+1-2N_2)} (2^lk)^{-\frac{d_1+1}2}
\sqrt{\tfrac{2^lk}{\la}}\,.
\Ee

It remains to show \eqref{philowbd}.
We distinguish between two subcases. First
 if  $|r-r(b)|\ge 2^{m-5}(\la k2^l)^{-1/2}$ then by the {\it Case I}
 assumption
we have
$|\phi(b,r,v)|\ge 2^{l-100}k  2^{2m-10} (\la k2^l)^{-1} = 2^{2m-110}\la^{-1},$
 and by  \eqref{splitting},
\eqref{wdiffestimate}  and \eqref{sectermdiff} we also get
\eqref{philowbd} provided that $\eps\ll 2^{-200}$.

For the second subcase we have $|r-r(b)|\le 2^{m-5}(\la k2^l)^{-1/2}$.
Since $(r,v)\notin \cP_{m-1}$ this implies that
$|w(b,r,v)|\ge 2^{2m-2}(\la k)^{-1}$, and since the quantity
$b(r-r(b))^2 |\cot b|$ is bounded by $2^{l+4}b(r-r(b))^2 \le 2^{2m-6} (b/k)\la^{-1}$ we also get
$|\phi(b,r,v)|\ge 2^{2m-3}\la^{-1}$, by \eqref{splitting}.
Now by \eqref{splitting}, \eqref{wdiffestimate} and \eqref{sectermdiff} we also get
$|\phi(t,r,v)|\ge 2^{2m-4}\la^{-1}$, if $\eps$ is sufficiently small.
Thus
\eqref{philowbd} is verified and
\eqref{OmIest} is proved.

\medskip

\noindent{\it Case II:  $|\phi(b,r,v)| \le 2^{l-100}k(r-r(b))^2
$.}
We show
\begin{multline} \label{phiderivlowerbound}
|\phi_t(t,r,v)|\ge  2^{m-20} 2^{3l/2} k^{1/2} (r+r(b))\la^{-1/2}\\
\text{ if } (r,v)\in \cR^{II}_m\,,\quad
|t-b|\le \eps\sqrt{\tfrac{\la 2^l}{k}}\,.
\end{multline}
and this will enable us to get a gain when integrating by parts in $t$.
To prove \eqref{phiderivlowerbound} we first establish
\Be\label{rminusrblwbd}
|r-r(b)|\ge 2^{m-10} (\la k2^l)^{-1/2}\,\text{ for }
(r,v)\in \cR^{II}_m\,.
\Ee
Note that if $|w(b,r,v)|\le 2^{2m-3}(\la k)^{-1}$ then
$|r-r(b)|\ge 2^{m-1}
(\la k2^l)^{-1/2}$ since $\cR_m^{II}\subset \cP_{m-1}^\complement$. Thus to verify
 \eqref{rminusrblwbd} we may assume
$|w(b,r,v)|\ge 2^{2m-3}(\la k)^{-1}$. In this case we get from
  \eqref{splitting}, $(r,v)\in\cP_m$ and the {\it Case II} assumption
\begin{align*}&(r-r(b))^2 |\cot b|\,\ge\,
|w(b,r,v)|- b^{-1}|\phi(b,r,v)|\\
&\,\ge\, 2^{2m-3}(\la k)^{-1}- b^{-1} k2^{l-100}2^{2m}(\la k2^{l})^{-1}\,\ge\, 2^{2m-4}(\la k)^{-1}
\end{align*}
and hence $(r-r(b))^2 2^{l+4} \ge 2^{2m-4}(\la k)^{-1}$ which implies
\eqref{rminusrblwbd}.
In order to prove
\eqref{phiderivlowerbound} we use \eqref{phit-splitting} and
\eqref{rminusrblwbd} to estimate
\begin{align*}
|\phi_t(b,r,v)|&\ge \frac{b}{\sin^2 b} (r+r(b))|r-r(b)|-2^{l-100} \frac kb
(r-r(b)^2
\\&\ge \frac{|r-r(b)|}{b} \Big(\frac{r+r(b)}{r(b)^2}- \frac{2^l k}{2^{100}}|r-r(b)|\Big)\ge \frac{(r+r(b))|r-r(b)|}{2b r(b)^2}
\\&\ge2^{2l-4} k(r+r(b)) \frac{2^{m-10}}{\sqrt{\la k2^l}}
\ge 2^{m-15} k^{1/2} 2^{3l/2} (r+r(b)) \la^{-1/2}
\end{align*}
which yields \eqref{phiderivlowerbound} for $t=b$.
We need  to show the lower bound for
 $|t-b| \le \eps \sqrt{k/(2^l\la)}$.
By \eqref{psitt} we have $|\phi_{tt}(t',r,v)|\le r^2 b 2^{3l+4}$ for
$|t'-b| \le \eps \sqrt{\frac{b}{2^l\la}}$ and thus
$$|\phi_t(t,r,v)-\phi_t(b,r,v)|\le 2^6  r^2 2^{3l} k\eps
\sqrt{\tfrac{k}{2^l\la}} \le 2^{m-30} 2^{3l/2} k^{1/2}\la^{-1/2}(r+r(b))
$$
if $\eps \ll 2^{-100}$.
The second inequality in the last display is easy to check. If $r\le 2r(b)$
then  use $r\lc (2^l k)^{-1}\approx r+r(b)$ and if $r>2r(b)$ then use
$r-r(b)\approx r+r(b)\approx r$. In both cases  the asserted inequality holds for small $\eps$ and thus \eqref{phiderivlowerbound} holds for
$|t-b| \le \eps \sqrt{k/(2^l\la)}$.
We note that under the condition \eqref{rminusrblwbd}
the range $r\le 2 r(b)$ corresponds to $2^m \lc \sqrt{\la(2^l k)^{-1}}$
and the range $r\ge 2 r(b)$ corresponds to $2^m \gc \sqrt{\la(2^l k)^{-1}}$.

We now estimate the $L^1$ norm over the region
where $(r,v)\in \cR_m^{II}$.
Let $\cL_t$ be the differential operator defined by $\cL_t g= \frac{\partial}{\partial t}( \frac{g}{\phi_t})$.
By $N_1$  integration by parts in $t$ we get
(with $|x|=r$, $4|u|=v$)
\begin{multline*}
A_{\la,b}^{k,l} (x,u) \,= \,
i^{N_1}\la^{-N_1}\fC_{\la,k,l} \,\times \\
\iint
e^{i\la s\phi(t,|x|,4|u|)}
s^{-N_1}\cL_t^{N_1} [\eta_{\la,k,l,b}(s,t) \vpi_1(\la stv)]
\,  dt \,ds\,.
\end{multline*}
To estimate the integrand use the lower bound on $|\phi_t|$,
\eqref{phiderivlowerbound}.
Moreover  we have the upper bounds
\eqref{higherpsider}  for the higher derivatives of $\psi$  (and then $\phi$)
which give $\partial_t^n\phi=O(2^{l(n+1)}br^2)$ for $n\ge 2$.
Each differentiation of the cutoff function produces a factor of
$(\la 2^l k^{-1})^{1/2}$.
By the one-dimensional version of Lemma \ref{iterateddiffop}  described in the subsequent remark the expression
$\la^{-N_1}(\la bv)^{(d_2-1)/2} |\cL_t^{N_1}
[\eta_{\la,k,l,b}(s,t) \vpi_1(\la st v)]|$  can be estimated by a sum of $C(N_1)$ terms of the form
\Be \label{factorsintbyparts}
\la^{-N_1}\frac{ (\la 2^l/k)^{\alpha/2}}
{(2^m2^{3l/2}k^{1/2}(r+r(b))\la^{-1/2})^\alpha}
\prod_{\beta \in \fI} \frac{2^{l(\beta+1)}kr^2}{(2^m 2^{3l/2}k^{1/2} (r+r(b))\la^{-1/2})^\beta}
\Ee
where $\alpha\in \{0,\dots, N_1\}$, $\fI$ is a set of integers $\beta\in \{2,\dots, N_1+1\}$
 with the property that $\sum_{\beta\in \fI}(\beta-1)=N_1-\alpha$.
If $\fI$ is the empty set then we interpret the product as $1$.
We observe that for $(r,v)\in \cR_m^{II}$ we have $|r-r(b)|\approx 2^m(\la k 2^l)^{-1/2}$.  Thus if $2^m \le \sqrt{\la(2^l k)^{-1}}$
we have
 $r\lc (2^lk)^{-1}$ and   $r+r(b)\approx (2^lk)^{-1}$  while for
$2^m > \sqrt{\la(2^l k)^{-1}}$ we have
 $r\approx r-r(b) \approx r+r(b)\approx 2^m (\la k2^l)^{-1/2}$.

A short computation  which uses these observations shows that in the case
 $2^m \le \sqrt{\la(2^l k)^{-1}}$
the terms
\eqref{factorsintbyparts}  are $\lc
2^{-m\alpha} \prod_{\beta\in \fI} \big[ 2^{-m\beta} (2^lk/\la)^{\beta/2-1}\big]$.
In the case $2^m > \sqrt{\la(2^l k)^{-1}}$
the terms
\eqref{factorsintbyparts}  are dominated by a constant times
$ (\la2^{-l} k^{-1})^{\alpha/2}2^{-2m\alpha} \prod_{\beta\in \fI}2^{-m(\beta-1)}$.
In either case the terms \eqref{factorsintbyparts}  are $\lc 2^{-mN_1}$
(since $\alpha+\sum_{\beta\in\fI}\beta\ge N_1$).
This means that  we gain a factor of $2^{-mN_1}$ over the size estimate
\eqref{basicsizeest}.
Consequently,
\Be\label{OmIIest}
\iint_{\Omega^{II}_m}
|A^{k,l}_{\la,b}(x,u)| \, dx \,du \, \lc 2^{m(d_1+d_2+1-N_1)} (2^lk)^{-\frac{d_1+1}2}
\sqrt{\tfrac{2^lk}{\la}}\,.
\Ee
The  assertion of the proposition follows then from \eqref{OmIest} and
\eqref{OmIIest}.
\end{proof}

\subsection{\it $L^1$ estimates for $T^k_\la$ and $W_{j,n}$}

\begin{proof} [Proof of  \eqref{TlakL1}]
   Let us recall that $k\le 8\la.$
If we sum the bounds in Proposition
\ref{vert-hor}  in $b\in \cT_{2^j,k,l}$ we get
$$\|A^{k,l}_{2^j}\|_{L^1} \lc (2^l k)^{-\frac{d_1+1}{2}},
\quad 2^l\lc \frac{2^{j}}k\,.$$

We also have
\Be
\label{KklminusAkl}
\|2^{-j\frac{d-1}{2}}K^{k,l}_{2^j}-A^{k,l}_{2^j}\|_{1}
\lc (2^l k)^{-1} 2^{-j\frac{d_1-1}{2}};
\Ee
for the part of $ K^{k,l}_{2^j}$ where $|u|\lesssim 1/{k\la}$ this follows from Lemma \ref{smallulemmakl}, and for the remaining part from Lemma \ref{Bkllemma}.
Combining these two estimates, we find that
\Be
\label{Kkllsmall}
\|2^{-j\frac{d-1}{2}}K^{k,l}_{2^j}\|_{1}
\lc (2^l k)^{-\frac{d_1+1}{2}}, \quad 2^l\lesssim  \frac{2^{j}}k.
\Ee
Moreover, by
Lemma \ref{Bkllemma} and Lemma \ref{kgrlalemma}, we have
\Be
\label{Kkllarge}
\|2^{-j\frac{d-1}{2}}K^{k,l}_{2^j}\|_{1}
\lc (2^l k)^{-1} 2^{-j\frac{d_1-1}{2}}, \quad 2^l\ge 10^6  \frac{2^{j}}k.
\Ee
Altogether this leads to
\Be
\label{Tlaklbounds}
2^{-j(d-1)/2}\|T_{2^j}^{k,l}\|_{L^1\to L^1} \lc (2^l k)^{-\frac{d_1+1}{2}}\,.
\Ee
and \eqref{TlakL1} follows if we sum in $l$.
\end{proof}

\subsection{\it An estimate away from the singular support} \label{awayestim}
For later use in the proof of Theorem \ref{multipliers} we need the following
observation.

\begin{prop}\label{junkaway}  Let $\la \ge 1$, $K_\la$ be the convolution kernel for the
operator $\chi(\la^{-1}\sqrt L) e^{i\sqrt L},$  where $\chi\in\cS(\RR),$ and let $R\ge 10$.
Then, for every $N\in\NN,$
$$\int_{\max\{|x|,|u|\} \ge R} |K_\la(x,u)| \, dx\, du \le C_N (\la R)^{-N}\,.
$$
Moreover, the constants $C_N$ depend only on $N$ and a suitable Schwartz norm of $\chi.$
\end{prop}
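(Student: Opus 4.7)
I would base the proof on the factorization $K_\la = W * \Phi_\la$ (group convolution), where $\Phi_\la$ is the convolution kernel of $\chi(\la^{-1}\sqrt L)$ and $W = e^{i\sqrt L}\de$ is the distribution kernel of the half-wave operator, and combine the Euclidean Schwartz decay of $\Phi_\la$ with finite propagation speed of the wave operator in the Carnot--Carath\'eodory (CC) metric. By Hulanicki's theorem and automorphic scaling,
\[
\Phi_\la(x,u) = \la^{Q}F_\chi(\la x,\la^2 u),\qquad Q = d_1+2d_2,
\]
with $F_\chi\in\cS(G)$ whose Schwartz seminorms are controlled by finitely many Schwartz seminorms of $\chi$. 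Consequently, for every $N$ and every multi-index $\al$,
\[
|\pa^\al\Phi_\la(x,u)|\lc \la^{Q+2|\al|}(1+\la|x|+\la^2|u|)^{-N}.
\]

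On the other hand, by finite propagation speed, $W$ is supported in the unit CC-ball, hence in $\{(y,v):|y|,|v|\lc 1\}$. A compactly supported distribution has finite order $M_0$ depending only on the dimension, yielding
\[
|\lan W,\psi\ran|\lc \sum_{|\al|\le M_0}\sup_{\|(y,v)\|_{\text{Ko}}\lc 1}|\pa^\al\psi(y,v)|,\qquad \psi\in C^\infty(G).
\]
Apply this to $\psi_{x,u}(y,v)=\Phi_\la((y,v)^{-1}(x,u))$ and use $(y,v)^{-1}(x,u)=(x-y,\,u-v-\tfrac12\lan\vec Jy,x\ran)$: by the chain rule each $y$-derivative contributes at most $\la+\la^2|x|$ and each $v$-derivative contributes $\la^2$. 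Combining with the Schwartz estimate above yields
\[
|K_\la(x,u)|\lc \sup_{\|(y,v)\|_{\text{Ko}}\lc 1}(1+|x|)^{M_0}\la^{Q+2M_0}\bigl(1+\la|x-y|+\la^2|u-v-\tfrac12\lan\vec Jy,x\ran|\bigr)^{-N}
\]
for every $N$, with constants depending only on $N$ and finitely many Schwartz seminorms of $\chi$.

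It remains to integrate over $\{\max(|x|,|u|)\ge R\}$, split into $\{|x|\ge R/2\}$ and $\{|x|\le R/2,\,|u|\ge R\}$. On the first piece, $|x-y|\ge R/4$ uniformly in $|y|\lc 1$; splitting the decay factor as $(1+\la|x|)^{-N/2}(1+\la^2|u-v-\tfrac12\lan\vec Jy,x\ran|)^{-N/2}$ and integrating first in $u$ (an affine shift of $\bbR^{d_2}$ of Jacobian $1$) absorbs a factor $\lc\la^{-2d_2}$, after which the $x$-integral of $(1+|x|)^{M_0}(1+\la|x|)^{-N/2}$ over $|x|\ge R$, multiplied by the prefactor $\la^{Q+2M_0}$, is $\lc (\la R)^{-N'}$ for $N$ large enough. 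On the second piece, $|\tfrac12\lan\vec Jy,x\ran|\le R/4$, so $|u-v-\tfrac12\lan\vec Jy,x\ran|\ge R/4$ and the $(1+\la^2 R)^{-N}$ factor supplies ample decay; integrating against the $x$-ball (volume $R^{d_1}$ and weight $R^{M_0}$) and absorbing the $\la^{-2d_2}$ from the $\la^2|u|$-direction once more yields $(\la R)^{-N'}$.

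\textbf{Main obstacle.} The delicate region is $\{|x|\le R/2,\,|u|\ge R\}$, where the CC-distance to the origin is only $\gtrsim\sqrt R$: finite propagation speed alone would deliver merely $(\la\sqrt R)^{-N}$. Recovering the sharp $(\la R)^{-N}$ rate relies on the finer Euclidean Schwartz decay of $\Phi_\la$ at scale $\la^2$ in the central variables---stronger than mere Koranyi-metric decay---together with the observation that the Heisenberg twist $\tfrac12\lan\vec Jy,x\ran$ is uniformly bounded by $R/4$, and hence dominated by $|u|$, on this region.
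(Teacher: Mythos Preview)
Your argument has a genuine gap at the very first step: the distribution $W=e^{i\sqrt L}\delta$ is \emph{not} compactly supported, so finite propagation speed does not apply to it directly. Finite propagation speed for the wave equation $(\partial_\tau^2+L)u=0$ yields compact support (in the unit CC-ball) for the kernels of $\cos\sqrt L$ and $\frac{\sin\sqrt L}{\sqrt L}$, but not for $\sin\sqrt L=\sqrt L\cdot\frac{\sin\sqrt L}{\sqrt L}$, since $\sqrt L$ is a nonlocal operator. Already in the Euclidean model this fails: for $L=-\Delta$ on $\bbR$ the kernel of $e^{i\sqrt{-\Delta}}$ is the inverse Fourier transform of $\xi\mapsto e^{i|\xi|}$, a sum of two boundary-value distributions of the form $(x\pm c+i0)^{-1}$, which decay like $|x|^{-1}$ but are not compactly supported.

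The fix is exactly what the paper does: write
\[
\chi(\la^{-1}\sqrt L)e^{i\sqrt L}=\chi(\la^{-1}\sqrt L)\cos\sqrt L+i\la\,\tilde\chi(\la^{-1}\sqrt L)\,\frac{\sin\sqrt L}{\sqrt L},\qquad \tilde\chi(s)=s\chi(s),
\]
so that the extra factor $\sqrt L$ is absorbed into the Schwartz multiplier. Now each piece factors as $\Phi_\la*\cP$ (resp.\ $\widetilde\Phi_\la*\cQ$), where $\cP,\cQ$ are the kernels of $\cos\sqrt L$ and $\frac{\sin\sqrt L}{\sqrt L}$, genuinely compactly supported distributions of finite order by Melrose's finite propagation speed result, and $\Phi_\la,\widetilde\Phi_\la$ are the rescaled Schwartz kernels you describe. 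From this point on your pointwise estimate and the integration over $\{|x|\ge R/2\}$ and $\{|x|\le R/2,\ |u|\ge R\}$ go through essentially as written, and your identification of the ``main obstacle'' (that the CC-distance in the central direction is only $\sqrt R$, so one must use the finer $\la^2$-scale Euclidean decay of $\Phi_\la$ in $u$) is exactly the right observation.
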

\begin{proof}  This estimate is implicit in our arguments above, but it is easier to establish it as a direct  consequence of the finite propagation speed of solutions to the wave equation \cite{melrose}. Indeed, write
$$
\chi(\la^{-1}\sqrt L) e^{i\sqrt L}=\chi(\la^{-1}\sqrt L) \cos{\sqrt L}
+i\la\tilde\chi(\la^{-1}\sqrt L)\frac {\sin{\sqrt L}}{\sqrt L},
$$
with $\tilde\chi(s)=s\chi(s),$ and denote by $\var_\la$ and $\cP$ the convolution kernels for the operators $\chi(\la^{-1}\sqrt L)$ and $\cos{\sqrt L},$  respectively. Then $\cP$ is a compactly supported distribution (of finite order). Indeed, $\cP$ is supported in the unit ball with respect to the optimal control distance  associated to  the H\"ormander system of vector fields $X_1,\dots,X_{d_1},$ which is contained in the Euclidean ball of radius $10.$  Moreover,  by homogeneity, $\var_\la(x,u)=\la^{d_1+2d_2}\var(\la x,\la^2 u),$ with a fixed Schwartz function $\var.$  Note also that by  Hulanicki's theorem \cite{hulanicki}, the mapping taking $\chi$ to $\var$ is continuos in the Schwartz topologies.  Since the convolution kernel $K_\la^c$ for the operator $\chi(\la^{-1}\sqrt L) \cos{\sqrt L}$ is given by $\var_\la *\cP,$  it  is then easily seen $K_\la^c(x,u)$ can be estimated by $C_N\la^M(\la|x|+\la^2|u|)^{-N}$ for every $N\in\NN,$ with a fixed constant $M.$ A very similar argument applies to $\tilde\chi(\la^{-1}\sqrt L)\frac {\sin{\sqrt L}}{\sqrt L},$ and thus we obtain the above integral estimate for $K_\la.$
\end{proof}

\section{$h_\iso^1\to L^1$ estimates for the operators $\cW_n$}\label{hardyspaceestimates}
In this section we consider the operatots
$\cW_n=\sum_j W_{j,n}$ and prove the relevant estimate 
in Theorem \ref{refinedh1thm}.
In the proof we shall use a simple $L^2$ bound which follows from the spectral theorem,
namely for $j_0>0$
\Be \label{WjnL2lemma}
\Big\| \sum_{j\ge j_0} W_{j,n}\Big\|_{L^2\to L^2} \lc
2^{-j_0(d-1)/2}\,.
\Ee

\subsection*{\it Preliminary considerations}
Let $\rho \le  1$ and let $f_\rho$ be an $L^2$-function satisfying
\Be \label{atomprop}
\begin{gathered}\|f_\rho\|_2 \le\rho^{-d/2}, \\
\supp(f_\rho)\subset Q_{\rho,E}:=\{(x,u): \max \{|x|,|u|\} \le \rho\}\,,
\end{gathered}
\Ee
and we also assume that
\Be \label{cancel} \iint f_\rho(x,u) dx\,du=0\,, \text{ if } \rho\le 1/2.
\Ee
In what follows we also need notation for the expanded
Euclidean "ball"
\Be\label{expandedball}
Q_{\rho,E, *}=\{(x,u): \max \{|x|,|u|\} \le C_*\rho\}\,,
\Ee where   $C_*= 10(1+d_2\max_i\|J_i\|)$.

We begin with the situation given by  \eqref{cancel}. By translation invariance and the definition of $h^1_\iso$ it will
suffice to
check   that
\Be\label{atomest}
\|\cW_n f_\rho\|_{L^1} \lc (1+n)2^{-n(d_1-1)/2}\,.
\Ee


We work with dyadic spectral decompositions for the operators $|U|$ and $\sqrt{L}$ and need to discuss how they act on the atom $f_\rho$.

For $j> 0$, $n\ge 0$, let $H_{j,n}$ be the convolution kernel
defined by
$$\chi_1(2^{-2j}L)\zeta_1(2^{-j-n}|U|)f= f*H_{j,n}.$$
From \eqref{jtspec} we see that
$$H_{j,n}=0 \text{ when } n> j+11\,.$$

\begin{lemma} \label{Hjnatomlem}
 Let $\rho\le 1$, and  $f_\rho$ be as in \eqref{atomprop}.
 Then

(i)  $\|f_\rho* H_{j,n}\|_1\lc 1$ and
\Be \label{Hjnatomaway}
\|f_\rho* H_{j,n}\|_{L^1(Q_{\rho,E,*}^{\complement})} \, \lc_N(2^{j}\rho)^{-N}.
\Ee

(ii) If $f_\rho$ satisfies  \eqref{cancel} then
\Be \label{Hjnatom}
\|f_\rho* H_{j,n}\|_1 \lc \min \{ 1,\, 2^{j+n}\rho\}\,.
\Ee
\end{lemma}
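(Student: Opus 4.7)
The strategy is to decouple the horizontal and central scales by realizing $H_{j,n}$ as an ordinary Euclidean convolution of two explicit pieces, after which the three claims follow from size and pointwise decay estimates together with (for (ii)) a Taylor expansion exploiting an extra cancellation from the concentration of the kernel in $x$. Since $\zeta_1(2^{-j-n}|U|)$ acts on the central variables as Euclidean convolution with a Schwartz bump $g_n$ at scale $2^{-(j+n)}$, and since $\inn{\vec Jx}{x}=0$ identifies group convolution with Euclidean convolution in the central direction, one checks directly that $H_{j,n}(x,u)=\int K_1(x,u-v)g_n(v)\,dv$, where $K_1$ is the convolution kernel of $\chi_1(2^{-2j}L)$. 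By Hulanicki's theorem and the nonisotropic dilation $\delta_{2^{-j}}$, $K_1$ is Schwartz at scale $(2^{-j},2^{-2j})$ with $\|K_1\|_1\lc 1$; combining these and using $n\le j+11$ gives
\[
|H_{j,n}(x,u)|\lcs{N}2^{jd_1+(j+n)d_2}(1+2^j|x|)^{-N}(1+2^{j+n}|u|)^{-N}
\]
together with $\|H_{j,n}\|_1\lc 1$, $\|\nabla_x H_{j,n}\|_1\lc 2^j$, $\|\nabla_u H_{j,n}\|_1\lc 2^{j+n}$, and (since $|x|K_1$ has $L^1$ norm $\lc 2^{-j}$) the crucial auxiliary bound $\bigl\|\,|x|\,\nabla_u H_{j,n}\bigr\|_1\lc 2^n$.

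For part (i), Young's inequality with $\|f_\rho\|_1\le |\supp f_\rho|^{1/2}\|f_\rho\|_2\le 1$ gives $\|f_\rho*H_{j,n}\|_1\lc 1$. For \eqref{Hjnatomaway}, since $\|f_\rho\|_1\le 1$ it suffices to bound $\sup_{|Q|\le\rho}\|H_{j,n}(Q^{-1}\cdot)\|_{L^1(Q_{\rho,E,*}^\complement)}$, where $Q=(y,v)$ and $Q^{-1}P=(x-y,u-v-\tfrac12\inn{\vec Jy}{x})$. I split $Q_{\rho,E,*}^\complement$ by whether $|x|\le 2\rho$ or $|x|>2\rho$: in the latter case $|x-y|\ge|x|/2$ and the $x$-decay of $H_{j,n}$ yields $(2^j\rho)^{-N}$; in the former $|x|\lc\rho$ forces the Heisenberg correction to be $O(\rho^2)$, so $|u-v-\tfrac12\inn{\vec Jy}{x}|\gtrsim|u|\gtrsim\rho$ when $|u|\ge C_*\rho$, and the $u$-decay gives the even better bound $(2^{j+n}\rho)^{-N}$.

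For part (ii), given the uniform bound from (i) it suffices to prove $\|f_\rho*H_{j,n}\|_1\lc 2^{j+n}\rho$. Using $\int f_\rho=0$ and Fubini, the task reduces to $\sup_{|Q|\le\rho}\|H_{j,n}(Q^{-1}\cdot)-H_{j,n}\|_1\lc 2^{j+n}\rho$. Parameterizing $P_s=(x-sy,u-sv-\tfrac{s}{2}\inn{\vec Jy}{x})$, the fundamental theorem of calculus gives
\begin{multline*}
H_{j,n}(Q^{-1}P)-H_{j,n}(P)\\=\int_0^1\bigl[-y\!\cdot\!\nabla_x H_{j,n}(P_s)+(-v-\tfrac12\inn{\vec Jy}{x})\!\cdot\!\nabla_u H_{j,n}(P_s)\bigr]ds,
\end{multline*}
and the volume-preserving substitution $P\mapsto P_s$ (which replaces $x$ by $x'+sy$) bounds the three resulting $L^1$ contributions by $|y|\cdot 2^j$, $|v|\cdot 2^{j+n}$, and $|y|\bigl(\|\,|x|\nabla_u H_{j,n}\|_1+|y|\|\nabla_u H_{j,n}\|_1\bigr)\lc|y|\cdot 2^n+|y|^2\cdot 2^{j+n}$; summing with $|y|,|v|\le\rho\le 1$, $n\ge 0$ yields $2^{j+n}\rho$.

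The main obstacle is the Heisenberg correction $\tfrac12\inn{\vec Jy}{x}$ in the $u$-component of $Q^{-1}P$: in (ii) it brings an extra factor $|x|$ into the $\nabla_u$ term that, under a naive bound $|x|\lc 1$, would produce a spurious factor $2^{j+n}$ rather than the acceptable $2^n$. The resolution is the auxiliary estimate $\bigl\|\,|x|\nabla_u H_{j,n}\bigr\|_1\lc 2^n$, which stems from the genuine concentration of $K_1$ at scale $2^{-j}$ in $x$, and is precisely strong enough to absorb the correction into the target $2^{j+n}\rho$.
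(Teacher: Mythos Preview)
Your proof is correct and follows essentially the same route as the paper: you realize $H_{j,n}$ as a Euclidean convolution in the central variable of the rescaled Schwartz kernel of $\chi_1(L)$ with a bump at scale $2^{-(j+n)}$, deduce the same $L^1$-derivative bounds (including the key auxiliary estimate $\||x|\nabla_u H_{j,n}\|_1\lc 2^n$), and for (ii) linearize along the straight path $P_s$ and change variables $x\mapsto x-sy$. The only cosmetic difference is that the paper uses $\inn{\vec Jy}{y}=0$ to drop your extra $|y|^2 2^{j+n}$ term outright, whereas you keep it and absorb it via $|y|\le\rho\le 1$; both work.
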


\begin{proof}
By Hulanicki's theorem \cite{hulanicki} the convolution kernel of $\chi_1(L)$ is a Schwartz function $g_1$ on
$\bbR^{d_1+d_2}$. The  convolution kernel of $\zeta_1(|U|)$ is $\delta\otimes g_2$
where $\delta$ is the Dirac measure in $\bbR^{d_1}$ and $g_2$
is   a Schwartz function on $\bbR^{d_2}$. Then
\Be\label{hjn} H_{j,n}(x,u) =
\int 2^{j(d_1+2d_2)}
g_1 (2^j x, 2^{2j}w) 2^{(j+n)d_2}g_2(2^{j+n}(u-w)) \, dw\Ee
Clearly $\|H_{j,n}\|_1 =O(1)$ uniformly in $j,n$ and since
$\|f_\rho\|_1\lc 1$ we get from Minkowski's inequality
$\|f_\rho*H_{j,n}\|_1\lc 1$.

For the proof of
\eqref{Hjnatomaway} we may thus assume
 $2^j\ge 1/\rho$ and it  suffices to verify that for every
 $(y,v)\in Q_{\rho,E}$ the $L^1(Q_{A\rho,E}^\complement)$ norm of $(x,u)\mapsto$
$$
\int \frac{2^{j(d_1+2d_2)}}{(1+2^j|x-y|+2^{2j}|w|)^{N_1}}
\frac{2^{(j+n) d_2}} {(1+2^{j+n}|u-v-w+\frac 12\inn{\vec Jx}{y}|)^{N_1}}
dw
$$
is bounded by $C(2^j\rho)^{-N}$ if $N_1\gg N+ d_1+2d_2$. This is  straightforward.
For the proof of \eqref{Hjnatom} we observe that \eqref{hjn} implies
$$
2^{-j}\| \nabla_x  H_{j,n}\|_1 +2^{-j-n}\|\nabla_u H_{j,n}\|_1 = O(1)\,.
$$
Moreover $2^{-n}\||x|\nabla_u H_{j,n}\|_1 = O(1)$.
By  the cancellation condition \eqref{cancel}
\begin{align*}
&f* H_{j,n}(x,u) \\
&\quad= \int f_\rho(y,v)\big[ H_{j,n}(x-y, u-v+ \tfrac 12 \inn{\vec Jx}{y})
-H_{j,n}(x, u)\big] \, dy\, dv
\\
&\quad=
-\int f_\rho (y, v) \Big(\int_0^1
\biginn{y}{\nabla_x H_{j,n}(x-sy, u-sv+\tfrac s2 \inn{\vec Jx}{y})}
\\
&\qquad\qquad+ \biginn {v+\tfrac 12 \inn{\vec Jx}{y}}{\nabla_u H(x-sy, u-sv+\tfrac s2 \inn{\vec Jx}{y})}\,ds\Big) dy\, dv\,.
\end{align*}
We also use $\inn{\vec Jx}{y})=\inn{\vec J(x-sy)}{y})$ and a change of
variable to estimate
$$\|f_\rho*H_{j,n}\|\lc  \|f_\rho\|_1\,\rho\,
\big[ \|\nabla_x H_{j,n}\|_1
+ \|\nabla_u H_{j,n}\|_1+ \||x|\nabla_u H_{j,n}\|_1\big]\,,
$$
and \eqref{Hjnatom} follows.
\end{proof}

\begin{proof}[Proof of \eqref{atomest}] For $n>0$ split
$$
\begin{aligned}
\cW_n f_\rho&=
\sum_{\substack {j\ge n-11\\2^j \rho< 2^{-10  n}}} W_{j,n}f_\rho
+
\sum_{\substack {j\ge n-11\\2^{-10 n}\le    2^j \rho\le 2^{10 n}}}
W_{j,n}f_\rho
+
\sum_{2^{10 n}<    2^j \rho}
W_{j,n}f_\rho
\\&:=
I_{n,\rho}+II_{n,\rho}+III_{n,\rho}\,.
\end{aligned}
$$
The main contribution comes from the middle term and by \eqref{Wjnest} and the estimate $\|f_\rho\|_1\lc 1$ we immediately get
\Be\label{twonrho}\|II_{n,\rho}\|_1 \lc (1+n)2^{-n(d_1-1)/2} \,.\Ee
Let $\cJ_n$ be as in \eqref{jndef}, so that $\#(\cJ_n)=O(2^n)$.
We use  the estimate
\eqref{Tlaklbounds}
in conjunction with
\eqref{Hjnatom} and estimate
\begin{align*}
\|I_{n,\rho}\|_1&\le \sum_{2^j \rho< 2^{-10 n}} \sum_{k\in\cJ_n}
\sum_{l=1}^\infty
\|2^{-j(d-1)/2}T_{2^j}^{k,l} (f_\rho*H_{j,n})
 \|_1
\\&\lc\sum_{2^j \rho< 2^{-10  n}} \sum_{k\in\cJ_n}\sum_{l=1}^\infty
 (2^l k)^{-\frac{d_1+1}2} 2^{n+j}\rho
\lc 2^{-n(9+ \frac{d_1-1}{2})}.
\end{align*}

We turn to the estimation of the term $III_{n,\rho}$.
Let $\fT_{\rho,n}$ be a maximal $\sqrt{\eps  \rho}$ separated set of
$[2^{n-6}, 2^{n+6}]$.
For each $\beta \in \fT_{\rho,n}$ let, for large $C_1\gg 1$,
\Be\label{nghood}
\cN_{n,\rho}(\beta)=
\{(x,u): \big||x|-r(\beta)\big|\le \sqrt{C_1\rho},\quad|w(\beta, x, 4|u|)|\le
C_1\rho\}
\Ee
and
$$\cN_{n,\rho}=\bigcup_{\beta\in \fT_{\rho,n}} \cN_{n,\rho}(\beta)\,.$$
Observe that  $\meas(\cN_{n,\rho}(\beta))\lc_{C_1}
2^{-n(d_1+d_2-2)}\rho^{3/2}$  (by \eqref{rv(t)} and \eqref{slope})  and thus
$\meas(\cN_{n,\rho})\lc_{C_1}\rho$.
We separately estimate the quantity $III_{n,\rho}$ on $\cN_{n,\rho}$ and
its complement.
First, by the Cauchy-Schwarz inequality and  \eqref{WjnL2lemma}
(with $2^{j_0}\approx 2^{10 n}\rho^{-1}$)
$$\| III_{n,\rho}\|_{L^1(\cN_{n,\rho})} \lc \rho^{1/2}
  \|III_{n,\rho}\|_2
\lc (2^{-10 n}\rho)^{\frac{d-1}{2}}\rho^{1/2} \|f_\rho\|_2
$$
and, since $\rho^{d/2}\|f_\rho\|_2\lc 1$,
\Be\label{estonexcset}
\| III_{n,\rho}\|_{L^1(\cN_{n,\rho})} \lc 2^{-5(d-1)n}\,.
\Ee

In the complement of the exceptional set $\cN_{n,\rho}$ we split the
term
 $III_{n,\rho}$ as
$$III_{n,\rho}= \sum_{2^j \rho> 2^{10  n}}\sum_{k\in\cJ_n}\sum_{l=1}^\infty
(III_{n,\rho,j}^{k,l}+IV_{n,\rho,j}^{k,l})
$$
where
$$\begin{aligned}
& III_{n,\rho,j}^{k,l}=
2^{-j\frac{d-1}{2}}  T^{k,l}_{2^j} [
  (f_{\rho}*H_{j,n})\chi_{Q_{\rho,E,*}}]
\\
& IV_{n,\rho,j}^{k,l}=
2^{-j\frac{d-1}{2}}  T^{k,l}_{2^j} [
  (f_{\rho}*H_{j,n})\chi_{Q_{\rho,E,*}^\complement}]
\end{aligned}
$$ and $Q_{\rho,E,*} $ is as in \eqref{expandedball}.
From   \eqref{Hjnatomaway}  and \eqref{Tlaklbounds}
we immediately get
$\| IV_{n,\rho,j}^{k,l}\|_1\lc_N (2^lk)^{-(d_1+1)/2}
(2^j \rho)^{-N} $ and thus
$$ \sum_{2^j \rho> 2^{10  n}}\sum_{l=1}^\infty
\sum_{k\in\cJ_n} \| IV_{n,\rho,j}^{k,l}\|_1
\lc 2^{-10 nN}\,.
$$
It remains to  show that
\Be\label{offexcset}
\sum_{l=1}^\infty\sum_{k\in\cJ_n}  \sum_{2^j\rho>2^{10 n}}
\| III_{n,\rho,j}^{k,l}\|_{L^1(\cN_{n,\rho}^\complement)} \lc 2^{-n(d_1-1)/2}\,.
\Ee
Let $F_{j,n,\rho}=(f_\rho*H_{j,n})\chi_{Q_{\rho,E,*}}$, so that
$\|F_{j,n,\rho}\|_1\,\lc 1$.
We shall show that for $k\approx 2^n$
\Be\label{offexcsetmain}
\|F_{j,n,\rho}*A^{k,l}_{2^j}\|_{L^1(\cN_{n,\rho}^\complement)}
\lc_N (2^{j-n}\rho)^{-N} 2^{-(l+n)
\frac{d_1}{2}},
\quad 2^l\le 10^{8} 2^{j-n}\,,
\Ee
and \eqref{offexcset}  follows by combining
\eqref{offexcsetmain} with the estimates \eqref{KklminusAkl} and
\eqref{Kkllarge}.

\medskip

{\it Proof of   \eqref{offexcsetmain}.}
We split $A_{2^j}^{k,l}=\sum_{b\in \cT_{2^j,k,l}}
A_{2^j,b}^{k,l}$
as in \eqref{bdecomp}.
For each $b\in  \cT_{2^j,k,l} $ we may assign  a $\beta(b)\in
\fT_{\rho,n}$
so that $|\beta(b)-b|\le \sqrt{\eps\rho}$.
Let
$ \cT_{2^j,k,l}^\beta$
be the set of $b\in\cT_{2^j,k,l}$ with
$\beta(b)=\beta$.
Then $\# \cT_{2^j,k,l}^\beta\lc 2^{-n/2}\sqrt{ 2^{l+j}\rho}$.
In order to see \eqref{offexcsetmain} it thus suffices to show that
for  $2^l\le 10^8 2^{j-n} $, $|\beta-b|\le\rho$,
\Be \label{offexcsetbeta}
\|F_{j,n,\rho}*A^{k,l}_{2^j,b}\|_{L^1((\cN_{n,\rho}(\beta))^\complement)}
\lcs{N_1} (2^{j-n}\rho)^{-N_1} 2^{-(l+n)\frac{d_1+1}{2}} 2^{(n+l-j)/2}.
\Ee
To prove this we verify the following
\Be\label{geomclaim}
\text{{\it Claim: }}
\begin{aligned}
&\text{If } (\widetilde x,\widetilde u)\in
    Q_{\rho,E, *}, \,
(x,u)\in (\cN_{n,\rho}(\beta))^\complement
\text{ and }  2^{2m-j+n}\le \rho
\\& \text{then }
\big(|x-\widetilde x|, 4|u-\widetilde u+\tfrac12 \inn{\vec Jx}{\widetilde
  x}|\big)\notin
\cP_m(2^j, l,k; b) \,;
\end{aligned}
\Ee
$\cP_m(2^j,l,k;b)$ was defined in \eqref{cPm}.
Indeed the claim implies
$$
\big\|F_{j,n,\rho}*A^{k,l}_{2^j,b}\big\|_{L^1((\cN_{n,\rho}(\beta))^\complement)}
\lc
\int_{\substack{(|x|, 4|u|)\notin \\\cP_m(2^j, l,k; b)}}
|A^{k,l}_{2^j,b}(x,u)|dx\,du
$$
since
$\|F_{j,n,\rho}\|_1  =O(1)$
and \eqref{offexcsetbeta} follows from Proposition
\ref{vert-hor}.

To  verify the claim \eqref{geomclaim} we pick $(x,u)
\notin \cN_{n,\rho}(\beta)$
and distinguish two cases:
\begin{align*}
&\text{\it Case 1: $||x|-r(\beta)|\ge \sqrt{C_1\rho}$.}
\\
&\text{\it Case 2: $|w(\beta, |x|, 4|u|)| \ge C_1\rho $ and
$||x|-r(\beta)|\le \sqrt{C_1\rho}$.}
\end{align*}
It is clear that the conclusion of the claim holds if we can  show that
under the assumption that  $C_1$ in the definition \eqref{nghood} is chosen sufficiently large
(depending only on $\vec J$ and  the dimension $d$)
we have for all $(\widetilde x,\widetilde u)\in Q_{\rho,E,*}$
\begin{align} \label{case1conclusion}
&\big||x-\widetilde x|-r(b)\big| \,\ge \, \frac{\sqrt  {C_1\rho}}{2}
  &\text{ in Case 1},
\\
\label{case2conclusion}
&\big|w(b, |x-\widetilde x|, 4|u-\widetilde u+ \tfrac 12
\inn{\vec   Jx}{\widetilde x}|)\big| \ge \frac{C_1\rho}{2}
&\text{in Case 2}.
\end{align}

The Case 1 assumption implies   for $(\widetilde x, \widetilde u)\in Q_{\rho,E,*}$ (and sufficiently large $C_1$)
\begin{align*}
&\big||x-\widetilde x|-r(b)\big| \,\ge \,
||x|-r(\beta)| -|\widetilde x|- |r(b)-r(\beta)|
\\
&\ge C_1 \rho^{1/2} - C_*\rho - C |b-\beta|2^{-n} \ge
\frac{\sqrt  {C_1\rho}}{2}
\end{align*}
which is \eqref{case1conclusion}.

Now assume that $(x,u)$ satisfies the Case 2 assumption.
We then  have for all
$(\widetilde x, \widetilde u)\in Q_{\rho,E,*}$
\begin{align*}
&\big|w(b, |x-\widetilde x|, 4|u-\widetilde u+ \tfrac 12
\inn{\vec   J x}{\widetilde x}|)
- w(\beta,|x|, 4|u|)
\big|
\\
\,&\le\,\big|w(b, |x|, 4|u|)-w(\beta,|x|, 4|u|)\big|
\\&\quad+ \,\big |w(b, |x-\widetilde x|, 4|u-\widetilde u+\tfrac 12
\inn{\vec J  x}{\widetilde x}) - w(b, |x|, 4|u|)\big|
\end{align*}
The first term on the right hand side can be estimated using
\eqref{wdiff2}
(with $(t,b)$ replaced by $(b,\beta)$), and we see that it is
$\le (C+\sqrt C_1)\rho$ under the present Case 2 assumption. The
second term on the right hand side is equal to
$$
\Big|4|u|-4|u-\widetilde u+\tfrac 12 \inn{\vec Jx}{ \widetilde x}|
-\frac{v'(b)}{r'(b)} (|x|-|x-\widetilde x|)\Big|
$$
and since the Case 2 assumption implies $|x|=O(1)$ we see that the
displayed expression is $O(\rho)$. Thus, if $C_1$ in the definition
is sufficiently large
we obtain \eqref{case2conclusion}. This concludes the proof of the
claim
\eqref{geomclaim} and thus the estimate  \eqref{offexcsetmain}.

\medskip

We finally consider the case where $1/2<\rho\le 1,$ in which condition \eqref{cancel} is not required. This case can easily be handled by means of Proposition \ref{junkaway}. To this end, we decompose
$$
a(\sqrt{L})e^{i\sqrt{L}}=\sum_{j\ge 10}2^{-\frac{d-1} 2 j}g_j(2^{-j}\sqrt{L})e^{i\sqrt{L}},
$$
with $g_j(s)=2^{\frac{d-1} 2 j} a(2^{j}s)\chi_1(s).$ The family of functions $g_j$ is uniformly bounded in the Schwartz space. If $K_j$ denotes the convolution kernel for the operator  $g_j(2^{-j}\sqrt{L})e^{i\sqrt{L}},$  we thus obtain from Proposition \ref{junkaway} the uniform estimates
$$\int_{\max\{|x|,|u|\} \ge 100} |K_j(x,u)| \, dx\, du \le C_N  2^{-jN}\,.
$$
This implies that $$\int_{\max\{|x|,|u|\} \ge 200}|(a(\sqrt{L})e^{i\sqrt{L}}f_\rho)(x)|dxdu\lesssim \|f\|_1\lesssim 1.$$
And, by H\"older's inequality,  $$\int_{\max\{|x|,|u|\} \le 200}|(a(\sqrt{L})e^{i\sqrt{L}}f_\rho)(x)|dx\lesssim \|(a(\sqrt{L})e^{i\sqrt{L}}f_\rho)\|_2\lesssim\|f_\rho\|_2\lesssim 1.$$
This concludes the proof of Theorem \ref{refinedh1thm}.
\end{proof}

\section{Interpolation of Hardy spaces}\label{interpolation}

By interpolation for analytic families
Theorem \ref{main-theorem} can be deduced from the Hardy space estimate
if we show that  $L^p(G)$ is an interpolation space  for the couple
$(h^1_\iso(G), L^2(G))$,  with respect to
Calder\'on's
complex $[\cdot,\cdot]_\vth$ method.

\begin{theorem} \label{interpolthm} For $1<p\le 2$,
\Be\label{interpol} [h^1_\iso(G), L^2(G)]_\vth = L^p(G), \quad
\vth =2-2/p,
\Ee with equivalence of norms.
\end{theorem}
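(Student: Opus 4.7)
The plan is to establish the identity by proving the two inclusions separately.

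\emph{The inclusion $[h^1_\iso(G), L^2(G)]_\vth \hookrightarrow L^p(G)$} is the easy direction. For any $(P,r)$-atom $b$, the Cauchy--Schwarz inequality gives $\|b\|_1 \le |Q_{r,E}(P)|^{1/2}\|b\|_2 \le C$, so the atomic representation in the definition of $h^1_\iso$ converges absolutely in $L^1(G)$, yielding a continuous embedding $h^1_\iso(G) \hookrightarrow L^1(G)$. Monotonicity of the complex interpolation functor in the first slot, combined with the classical Riesz--Thorin identity $[L^1(G), L^2(G)]_\vth = L^p(G)$ (valid for $\vth = 2-2/p$), then gives the desired inclusion.

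\emph{For the converse inclusion $L^p(G) \hookrightarrow [h^1_\iso(G), L^2(G)]_\vth$}, I would construct, for each $f \in L^p(G)$ with $\|f\|_p = 1$, an admissible analytic family $F$ on the strip $\bar S = \{z : 0 \le \Re z \le 1\}$ satisfying $F(\vth) = f$ and $\sup_t\|F(it)\|_{h^1_\iso} + \sup_t\|F(1+it)\|_{L^2} \lesssim 1$. The geometric tool is a Calder\'on--Zygmund decomposition adapted to the isotropic Heisenberg balls $Q_{r,E}(P)$. First I would verify that the associated uncentered maximal operator
\[
\cM f(P) := \sup_{r>0} \frac{1}{|Q_{r,E}(P)|}\int_{Q_{r,E}(P)} |f|
\]
is weak $(1,1)$ and bounded on $L^q$ for $q > 1$; this follows from doubling ($|Q_{r,E}(P)| \approx r^d$) and a Vitali-type covering lemma obtained by left-translating balls to the origin. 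A Whitney decomposition of the level set $\Omega_k = \{\cM f > 2^k\}$ then yields $f = g_k + b_k$, where $g_k \in L^\infty$ with $\|g_k\|_\infty \lesssim 2^k$, and $b_k = \sum_j b_{k,j}$ with each $b_{k,j}$ supported in an isotropic Whitney ball $Q_{r_{k,j},E}(P_{k,j})$ satisfying $\|b_{k,j}\|_2 \lesssim 2^k |Q_{r_{k,j},E}|^{1/2}$, $\int b_{k,j} = 0$ when $r_{k,j} \le 1/2$, and $\sum_j |Q_{r_{k,j},E}| \lesssim 2^{-kp}$.

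Setting $\Delta_k = b_k - b_{k+1}$ one has the telescoping identity $f = \sum_{k \in \bbZ} \Delta_k$, in which each $\Delta_k$ is a sum of multiples of $h^1_\iso$-atoms with total atomic mass $\lesssim 2^{k(1-p)}$ and also satisfies $\|\Delta_k\|_2 \lesssim 2^{k(1-p/2)}$ (by the $L^\infty$ bound and the size of the bad set). For $z \in \bar S$ I would then define
\[
F(z) \;=\; \sum_{k \in \bbZ} 2^{k(p-1)(\vth - z)/\vth}\, \Delta_k,
\]
interpreted as a norm-convergent series after an initial truncation argument. One checks $F(\vth) = f$; on $\Re z = 0$ the coefficients in front of the normalized atoms are absolutely summable to $O(1)$, giving $\|F(it)\|_{h^1_\iso} \lesssim 1$; on $\Re z = 1$, the essentially disjoint supports of the $\Delta_k$ at successive scales combine with the $L^2$ bounds above to yield $\|F(1+it)\|_2 \lesssim 1$. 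Holomorphy on the interior and continuity on the boundary are standard.

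\emph{The main obstacle} is the delicate geometric and quantitative setup rather than the soft interpolation machinery: one must verify a Whitney covering theorem for the isotropic Heisenberg balls fine enough that the Calder\'on--Zygmund pieces $b_{k,j}$ are genuine multiples of atoms in the paper's sense, namely supported in \emph{left-translates of Euclidean balls} (not Koranyi balls) and, at small scales, satisfying the cancellation condition $\int b_{k,j} = 0$. Once this is in place and the exponents in $F(z)$ are pinned down by the requirement $F(\vth) = f$, the remainder of the argument is standard and yields $\|f\|_{[h^1_\iso, L^2]_\vth} \lesssim \|f\|_p$.
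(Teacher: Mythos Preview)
Your approach is genuinely different from the paper's and plausible in outline, but it has a real gap at large scales that you have not addressed.

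The paper does \emph{not} build a Calder\'on--Zygmund decomposition for the isotropic Heisenberg balls. Instead it uses the method of retractions and coretractions: a smooth partition of unity at unit scale yields operators $R:h^1_\iso\to\ell^1(h^1_E)$, $S:\ell^1(h^1_E)\to h^1_\iso$ (and likewise $L^2\to\ell^2(L^2)$) with $SR=I$, and the result is then pulled back from the known Euclidean identity $[\ell^1(h^1_E),\ell^2(L^2)]_\vth=\ell^p(L^p)$. The only geometric input is that a Heisenberg-translated Euclidean atom of radius $r\le 1$, restricted to a unit cube, is (a bounded multiple of) a genuine Euclidean atom, and conversely; this holds precisely because the shear term $\tfrac12\inn{\vec Jx}{y}$ is $O(1)$ when $|x|,|y|\lc 1$.

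That same shear is exactly what breaks your argument. Your maximal operator $\cM$ is taken over \emph{all} $r>0$, and you appeal to doubling plus a Vitali covering lemma. But for $a,b\in G$ with $|a|_E,|b|_E\le r$ one only has
\[
|ab|_E \,\le\, 2r \,+\, \tfrac12\big|\inn{\vec J a_1}{b_1}\big| \,\lc\, r+r^2,
\]
so the quasi-triangle/engulfing property for the balls $Q_{r,E}$ fails uniformly once $r\gg 1$. Consequently the standard proof of the weak $(1,1)$ bound for $\cM$ does not go through, and your Whitney decomposition of $\Omega_k=\{\cM f>2^k\}$ is not available as stated. (Indeed the very estimates you need later, $\sum_{k,j}2^{kp}|Q_{k,j}|\approx\|\cM f\|_p^p\lc\|f\|_p^p$, rest on exactly this boundedness.) Restricting $\cM$ to radii $r\le 1$ repairs the covering lemma, but then the Whitney cubes of $\Omega_k$ need not all have diameter $\le 1$, and the pieces on large cubes are not multiples of $h^1_\iso$-atoms. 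The natural fix---localize $f$ to unit cubes by a partition of unity and run the CZ argument on each piece---is precisely the paper's retract step in disguise, after which one may as well quote the Euclidean interpolation theorem directly.

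In short: your easy inclusion is fine, and the analytic-family construction can be made to work once the CZ data are in hand; the missing ingredient is a justification of the covering/maximal-function machinery for the family $\{Q_{r,E}\}_{r>0}$, which in fact fails globally and forces a preliminary localization to unit scale.
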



\begin{proof}
We deduce \eqref{interpol} from an analogous formula for the Euclidean
local Hardy spaces $h^1_E,$  more precisely, the vector-valued extension
\Be\label{interpolvect}
 [\ell^1(h^1_E), \ell^2(L^2)]_\vth = \ell^p(L^p), \quad
\vth =2-2/p.\Ee
Here $\ell^p\equiv \ell^p(\bbZ^{d_1+d_2})$.
To do this one uses the method of retractions and coretractions
(\cf. \cite{triebelinterpol}); \eqref{interpolvect} follows from the
definition of the complex interpolation method if operators
\begin{align*}
R: \ & h^1_\iso+L^2
\to  \ell^1(h^1_E) +\ell^2(L^2)
\\
S: \ &\ell^1(h^1_E) +\ell^2(L^2)  \to h^1_\iso+L^2
\end{align*}
can be constructed such that
\[R: \begin{cases}   h^1_\iso
\to  \ell^1(h^1_E)
\\ L^2 \to \ell^2(L^2)
\end{cases}
\qquad S:\begin{cases} \ \ell^1(h^1_E)
\to h^1_\iso
\\
 \ell^2(L^2) \to L^2  \end{cases}
\]
are bounded
and
$$SR= I, $$
the identity operator on $L^p$ or $h^1_\iso$.

To define $R$ and $S$  let $\vphi_1\in C^\infty_0(\bbR^{d_1})$,
$\vphi_2\in C^\infty_0(\bbR^{d_2})$  supported in
in $(-1,1)^{d_1}$  and $(-1,1)^{d_2}$  respectively
and such
that
for all  $ x\in \bbR^{d_1}$, $u\in \bbR^{d_2}$
\Be \label{unity}\sum_{X\in \bbZ^{d_1}}
\vphi^2_1(x+X) =1, \quad
\sum_{U\in \bbZ^{d_2}}
\vphi^2_2(u+U) =1.
\Ee
We define $\varphi(x,u)=\varphi_1(x)\varphi_2(u)$ and set
\begin{multline*} Rf= \{R_{X,U} f\}_{(X,U)\in \bbZ^{d_1}\times\bbZ^{d_2}} \\
  \text{ where  }
R_{X,U} f(x,u)= \vphi(-x,-u) f(x+X, u+U+ \tfrac 12\inn{\vec J X}{x});
\end{multline*}
moreover for $H=\{H_{X,U}\}_{(X,U)\in \bbZ^{d_1}\times\bbZ^{d_2}}\in \ell^1(h^1_E) $
we set
\begin{multline*} SH(x,u) =\\
\sum_{(X,U)\in
    \bbZ^{d_1}\times\bbZ^{d_2}} \vphi(X-x, U-u-\tfrac 12\inn{\vec J X}{x})
H_{X,U} (x-X, u-U- \tfrac 12\inn{\vec J X}{x})
\end{multline*}
One  verifies quickly from \eqref{unity} that $SR$ is the identity.

We now   examine the boundedness properties of $R$ and $S$.
For the $  h^1_\iso
\to  \ell^1(h^1_E) $
of $R$ we consider a (Heisenberg-)$(P,\rho)$ atom $a$ with $P=(x_P,u_P)$ and
$\rho\le 1$.
Note that
$\vphi(-x,-u) a(x+X, u+U+ \inn{\vec J X}{x})$ is then supported on the
set of $(x,u)\in (-1,1)^{d_1+d_2}$ such that
$$|x_P-X-x|^2 +
|u_P-U-u -\inn{\vec J(X-x_P)}{x}|^2\le \rho^2.$$
Thus $R_{X,U} f$ is not identically zero only when $|X-x_P|+|U-u_P|\le
C_d$ some absolute constant $C_d$. And, since
$\inn{\vec J(X-x_P)}{X-x_P}=0$ we also see that in this case
the function \Be\label{atomchanged}(x,u)\mapsto a(x+X, u+U+ \tfrac 12\inn{\vec
  J X}{x})
\Ee is supported in a
Euclidean ball of radius $C \rho$ with center $(x_P-X, u_P-U)$. Since the
cancellation
property (if $\rho\le 1/2$) is not affected by the change of variable we see
that the function \eqref{atomchanged}  is equal to $c_b b$ where
$b$ is a Euclidean atom
and $|c_b|\lc 1$. Thus this function is in
$h^1_E$ with norm $\lc 1$. We also use that multiplication with
$\vphi(-x,-u)$ defines an operator which is bounded on the local
Hardy-space $h^1_E$.
Now it follows quickly that $R$ is
bounded as an operator from
$  h^1_\iso$ to $  \ell^1(h^1_E) $.
Indeed if $f=\sum_{c_\nu} a_{\nu}$ where $a_{\nu} $ are
$(P_\nu,r_\nu)$ atoms for suitable $r_\nu\le 1$ and $P_\nu$ then
\begin{align*}
&\|R f\|_{\ell^1(h^1_E)}
= \sum_{X,U}\big\|R_{X,U}\sum_\nu c_\nu a_{P,\nu}\big\|_{h^1_E}
\\ &\le C
\sum_{X,U} \sum_{\substack{\nu: |x_{P_\nu} -X|\le C_d\\ |u_{P_\nu}-U|\le C_d}}
|c_\nu| \le C' \sum_\nu |c_\nu|.
\end{align*}
This completes the proof of the
$h^1_\iso
\to  \ell^1(h^1_E)$ boundedness of $R$.

We now show that $S$ maps
$\ell^1(h^1_E)$ boundedly to $h^1_\iso$.
We first recall that the operation of multiplication with a smooth bump function maps $h^1_E$ to itself (\cf. \cite{goldberg}), thus $$\|\vphi(-\cdot) G_{X,U} \|_{h^1_E}\le C
\|G_{X,U} \|_{h^1_E}.$$
Using the atomic decomposition of $h^1_E$ functions we can
decompose
$$\vphi(-\cdot) G_{X,U} = \sum_{\nu} c_{X,U,\nu} a_{X,U,\nu}$$
where $\sum_{X,U,\nu} |c_{X,U,\nu}| \lc \|G\|_{\ell^1(h^1_E)}$ and the
$a_{X,U,\nu}$ are Euclidean atoms
supported in a ball
$$\{(x,u): |x-x_P|^2+ |u-u_P|^2 \le r^2\}\subset[-3,3]^{d_1+d_2};$$
with $P=P(X,U,\nu)$ and $r=r(X,U,\nu).$
Fix such an atom $a=a_{X,U,\nu}$. The function
\Be \label{movedatom}
\widetilde a_{X,U,\nu}: (x,u)\mapsto a_{X,U,\nu}(x-X, u-U-\tfrac 12\inn{\vec JX}{x})
\Ee
is supported in
\[\{(x,u): (|x-X-x_P|^2+ |u-U -\tfrac 12\inn{\vec JX}{x}
-u_P|^2)^{1/2} \le r\}\]
which is contained in the set of $(x,u)$ such that
$$\big(|x-(X+x_P)|^2+ |u-U - \tfrac 12\inn{\vec J(X+x_P)}{x} -u_P+
\tfrac 12 \inn{\vec Jx_P}{X+x_P}
|^2\big)^{\frac 12} $$
is $\le  (1+\tfrac 32\sqrt{ d_1}) r$.
Here we have used  that $|\inn{\vec J x_P}{ x-(X+x_P)}| \le |x_P| r $ and
$|x_P|\le 3\sqrt{d_1}$. The inclusion shows
that there is an  constant independent of $X,U,\nu$ so that
 function $\widetilde a_{X,U}/C$ is a Heisenberg atom associated  with
 a cube centered at $(X+x_P, U+u_P+\tfrac 12 \inn{\vec Jx_P}{X+x_P}$.
This statement holds at least if $r\le 1/(4d_1)$. If $r$ is close to
one then we can express $\widetilde a_{X,U}$ as a finite sum of
$6^d$ atoms supported in cubes of sidelength $1$.
 Thus we see that the function in \eqref{movedatom} has $h^1_{\iso}$
 norm $\lc 1$. This implies the
$ \ell^1(h^1_E)\to h^1_\iso$  boundedness of $S,$ since
it follows that
$$\|SG\|_{h^1_{\iso}}\lc \sum_{(X,U)}
\sum_{\nu} |c_{X,U,\nu}|
\|\widetilde a_{X,U,\nu}\|_{h^1_{\iso}}\lc \sum_{X,U,\nu}
|c_{X,U,\nu}|
\lc \|G\|_{\ell^1(h^1_E)}.
$$
Finally the
$ L^2
\to  \ell^2(L^2)$ boundedness of $R$ and the
 $ \ell^2(L^2)
\to L^2$ boundedness of $S$ are even more straightforward and
 follow by modifications of the arguments.
\end{proof}

\begin{proof}[Proof of Theorem \ref{main-theorem}]
By duality we may assume $1<p<2$. By scaling and symmetry
we may assume $\tau=1$. Let $a\in S^{-(d-1)(1/p-1/2)}$.
Consider the analytic family of operators
$$\cA_z= e^{z^2}\sum_{j=0}^\infty 2^{-jz\frac{d-1}{2}}
 2^{j(d-1)(\frac 1p-\frac 12)}
\zeta_j(\sqrt L) a(\sqrt L)e^{i\sqrt{L}}.$$
We need to check that $\cA_z$ is bounded on $L^p$ for $z=(2/p-1)$.
But  for $\Re(z)=0$ the operators $\cA_z$ are  bounded on $L^2$;
and
for $\Re(z)=1$ we have shown that $\cA_z$ maps   $h^1_\iso$ boundedly to $L^1,$ by
Theorem
\ref{h1thm}.
We apply the abstract version of the
interpolation theorem for
analytic families  in conjunction with
Theorem \ref{interpolthm} and the corresponding standard version
interpolation result for  $L^p$ spaces; the result is that
$\cA_\theta$ is bounded on $L^p$ for $\theta= 2/p-1$. This proves Theorem
\ref{main-theorem}.
\end{proof}

\section{Proof of Theorem \ref{multipliers}}
We decompose $m=\sum_{k\in \bbZ} m_k$ where $m_k$ is supported in
$(2^{k-1}, 2^{k+1})$ and where $h_k= m_k(2^k\cdot)$ satisfies
$$\sum_{\ell>1} \sup_k\int_{2^\ell}^{\infty}|\widehat h_k(\tau)|
\tau^{\frac{d-1}2}\, d\tau \,\le\, A \,.
$$
By the translation invariance and the  usual
Calder\'on-Zygmund arguments  (see, e.g., \cite{steinharmonic})
it
suffices to  prove that for all
$\rho>0$ and for all $L^1$ functions $f_\rho$ supported in the
Koranyi-ball $Q_\rho:=Q_\rho(0,0)$ and satisfying
$\int f_\rho \,dx=0$ we have
\Be\label{CZL1}
\sum_k \iint_{Q_{10\rho}^\complement}
|m_k(\sqrt{L})f_\rho| \,dx \lc A+\|m\|_\infty
\Ee
Let $\chi_1\in C^\infty_0$ be  supported in $(1/5,5)$ so that $\chi_1(s)=1$ for $s\in [1/4,4]$. Then for each $k$ write
$$m_k(\sqrt L)= h_k(2^{-k}\sqrt L)\chi_1(2^{-k}\sqrt L)
\,=\, \int \widehat {h_{k}}(\tau)
\chi_1(2^{-k}\sqrt L) e^{i 2^{-k} \tau\sqrt{L}}
d\tau
\,.
$$
By scale invariance and Theorem \ref{main-theoremL1},
the $L^1$ operator norm of the operator
$\chi_1(2^{-k}\sqrt L) e^{i 2^{-k} \tau\sqrt{L}}$ is
$O(1 +|\tau|)^{(d-1)/2}$ and thus
\Be\notag
\|m_k(\sqrt L)\|_{L^1\to L^1}
\lc
\int_{-\infty}^\infty
 |\widehat {h_{k}}(\tau)|  (1+|\tau|)^{\frac{d-1}{2}}
d\tau \,.
\Ee
Also observe that since the convolution kernel of $\chi_1(\sqrt L) $
is a Schwartz kernel we can use the cancellation and
support properties of
$f_\rho$ to get, with some $\eps>0$,
$$\|\chi_1(2^{-k}\sqrt L) f_\rho\|_1 \lc \min\{1, (2^k\rho)^{\eps}\}
 \|f_\rho\|_1\,.
$$
Thus the two preceding displayed inequalities yield
\begin{align}\notag
\sum_{k: 2^k\rho  \le M}
\|m_k(\sqrt L)f_\rho\|_{1}
&\le C_M
\sup_k \int_{-\infty}^\infty
 |\widehat {h_{k}}(\tau)|  (1+|\tau|)^{\frac{d-1}{2}}
d\tau  \, \|f_\rho\|_1
\\&\lcs{M} (\|m\|_\infty + \fA_2)   \, \|f_\rho\|_1\,
\label{firstgeomsum}
\end{align}
where for the last estimate we use
 $|\widehat {h_{k}}(\tau)|\le \|h_k\|_\infty \lc\|m\|_\infty$ when $|\tau|\le 2$.

We now consider the terms for $2^k\rho\ge M$ and $M$ large,
in the complement of the expanded Koranyi-ball $Q_{\rho,*}= Q_{C\rho}$
(for suitable large $C\gg 2$).
By a change of variable and an application of
 Proposition \ref{junkaway},
\begin{align*}
&\big\|e^{i2^{-k}\tau \sqrt L} \chi_1(2^{-k}\sqrt L)
 f_\rho \big\|_{L^1(Q_{\rho,*}^\complement)}=
\big\|e^{i\sqrt L} \chi_1(\tau^{-1}\sqrt L)
 f_\rho^{2^k/\tau} \big\|_{L^1(Q_{C_*\tau^{-1}2^k \rho}^\complement)}
\\ &\lc (2^k\rho \tau^{-1})^{-N}\quad  \text{  if $2^k\rho \gg \tau$}\,,
\end{align*}
where $f_\rho^{2^k/\tau}$ is a re-scaling of $f_\rho$ such that $\|f_\rho^{2^k/\tau}\|_1=\|f_\rho\|_1\lesssim 1.$

Hence if $M$ is sufficiently large  then for $2^k\rho>M$
\begin{multline*}
\|m_k(\sqrt L)f_\rho\|_{L^1(Q_{\rho,*}^\complement)}\,\lc_N\,\|f_\rho\|_1
\Big[\int_{|\tau|>2^k\rho}
|\widehat {h_k}(\tau)|(1+|\tau|)^{\frac{d-1}{2}} d\tau
\\+(2^k\rho)^{-N}
\int_{|\tau|\le 2^k\rho}|\widehat {h_k}(\tau)|(1+|\tau|)^{-N} d\tau\Big]
\end{multline*} and thus
\Be\label{largekcontribution}
\sum_{2^k\rho>M} \|m_k(\sqrt L)f_\rho\|_{L^1(Q_{\rho,*}^\complement)}
\lc \|m\|_\infty + \sum_{k: 2^k\rho>M}  \fA_{2^k\rho}\,.
\Ee
The theorem follows from  \eqref{firstgeomsum} and
\eqref{largekcontribution}.
\qed


\begin{thebibliography}{10}


\bibitem{beals}
R.M. Beals, \emph{$L\sp{p}$ boundedness of Fourier integral operators.}
  Mem. Amer. Math. Soc.  38  (1982), no. 264.

\bibitem{christ} M. Christ, \emph{$L\sp p$ bounds for spectral multipliers on nilpotent groups.}  Trans. Amer. Math. Soc.  328  (1991), no. 1, 73--81.



\bibitem{damek-ricci} E. Damek, F. Ricci, \emph{Harmonic analysis on solvable extensions of H-type groups.}   J. Geom. Anal.  2  (1992), 213--248.



\bibitem{folland} G.B.~Folland, Harmonic Analysis in Phase Space,
Annals of Math. Study 122, Princeton Univ. Press,  1989.

\bibitem{FollSt} G. Folland, E. M. Stein,
 Hardy spaces on homogeneous groups, Princeton Univ. Press,
Princeton University, 1982.

\bibitem{gaveau} B. Gaveau, {\it  Principe de moindre action,
propagation de la chaleur et estim\'es sous elliptiques sur certains
groupes nilpotents.}  Acta Math.  139  (1977), no. 1-2, 95--153.

\bibitem{ghk} P. Greiner, D.  Holcman, Y. Kannai,  \emph{Wave kernels related to second-order operators.} Duke Math. J. 114 (2002), no. 2, 329--386.


\bibitem{goldberg} D.  Goldberg, \emph{
A local version of real Hardy spaces.}
Duke Math. J. 46 (1979), no. 1, 27--42.



\bibitem{hebisch} W. Hebisch, \emph{
Multiplier theorem on generalized Heisenberg groups.}
Colloq. Math. 65 (1993), no. 2, 231--239.

\bibitem{hoermander-hypo}
L. H\"ormander,
\emph{Hypoelliptic second order differential equations.}
Acta Math. 119 (1967), 147--171.


\bibitem{hoermander-fio} \bysame,
\emph{Fourier integral operators, I.} Acta Math. 127 (1971), no. 1-2, 79--183.







\bibitem{hulanicki} A.  Hulanicki, \emph{A functional calculus for Rockland operators on nilpotent Lie groups.}
Studia Math. 78 (1984), no. 3, 253--266.


\bibitem{kaplan}
A.  Kaplan, \emph{Fundamental solutions for a class of hypoelliptic PDE generated by composition
of quadratic forms.}  Trans. Amer. Math. Soc. {258} (1980),
147--153.

\bibitem{kaplan-ricci}
A.  Kaplan,  F. Ricci, \emph{Harmonic analysis on groups of Heisenberg type.}  Lecture Notes in Math. {992}, Springer, Berlin  1983,
416--435.


\bibitem{martini} A. Martini, \emph{Analysis of joint spectral multipliers on Lie groups of polynomial
   growth.}  Ann. Inst. Fourier (Grenoble) {62}  (2012), no.4 , 1215--1263.













\bibitem{mauceri-meda} G.  Mauceri, S.  Meda,
\emph{Vector-valued multipliers on stratified groups.}
Rev. Mat. Iberoamericana  6  (1990),  no. 3-4, 141--154.

\bibitem{melrose}
R.  Melrose, \emph{Propagation for the wave group of a positive subelliptic second order differential operator.}   Hyperbolic equations and related topics, Academic Press, Boston, Mass.,  1986.



\bibitem{miyachi} A.  Miyachi,
\emph{On some estimates for the wave equation
in $L^p$ and $H^p$.}  Journ. Fac. Sci. Tokyo, Sci.IA,
{27} (1980), 331--354.



\bibitem{dissip} D.  M\"uller, \emph{Local solvability of second order
differential operators with double characteristics.
 II: Sufficient
conditions for left-invariant differential operators on the Heisenberg
group.} J. Reine Angew. Math. 607 (2007), 1--46.

\bibitem{edinburgh} \bysame, \emph{Analysis of invariant PDO's on the Heisenberg group,}  ICMS-Instructional Conference lecture, Edinburgh 1999, 	arXiv:1408.2634 [math.AP]

%

\bibitem{MR1} D.  M\"uller, F.   Ricci,
 \emph{ Analysis of second order differential operators on Heisenberg groups,  I.} Invent. Math.  101  (1990),  no. 3, 545--582.



\bibitem{MR-solv} \bysame,
\emph{Solvability  for a class of doubly characteristic  differential operators on 2-step nilpotent groups.} Annals of  Math.  {143}  (1996), 1--49.


\bibitem{MRS2}
D. M\"uller, F. Ricci and E.M. Stein,
\emph{
Marcinkiewicz multipliers and multi-parameter structure on Heisenberg(-type) groups, II.}
Math. Z. 221 (1996), no. 2, 267--291.


\bibitem{MuSt-mult}
D. M\"uller, E.M.  Stein,
\emph{On spectral multipliers for Heisenberg and related groups.}  J. Math. Pures Appl. (9)  73  (1994),  no. 4, 413--440.

\bibitem{MuSt-wave}\bysame,
\emph{$L\sp p$-estimates for the wave equation on the Heisenberg group.}
  Rev. Mat. Iberoamericana  {15}  (1999),  no. 2, 297--334.


\bibitem{nachman} A.I. Nachman, \emph{The wave equation on the Heisenberg group.} Comm. in PDE, {7} (1982), 675--714.

\bibitem{ne-st} E. Nelson, W. F. Stinespring,
\emph{Representations of elliptic operators in an enveloping algebra.}
Amer. J. Math. 81 (1959), 547-560.

\bibitem{peral} J. Peral, \emph{$L^p$ estimates for the wave
equation.} J. Funct. Anal., {36} (1980), 114--145.

\bibitem{ricci} F.  Ricci,
\emph{Harmonic analysis on generalized Heisenberg groups.}
Unpublished preprint.



\bibitem{SSS} A. Seeger, C.D. Sogge, E.M. Stein, \emph{
Regularity properties of Fourier integral operators.}  Annals of Math.,
{103} (1991), 231--251.







\bibitem{steinharmonic}
 E.M. Stein, Harmonic analysis: real-variable methods, orthogonality, and oscillatory integrals. With the assistance of Timothy S. Murphy.
Princeton Mathematical Series, 43.  Princeton University Press,
Princeton, NJ, 1993.


\bibitem{stw} E.M. Stein, G. Weiss,  Introduction to Fourier analysis
in Euclidean spaces, Princeton Univ. Press, 1971.


\bibitem{str} R.S. Strichartz,  \emph{$L\sp p$ harmonic analysis and Radon transforms on the Heisenberg group.}  J. Funct. Anal.
96  (1991),  no. 2, 350--406.

\bibitem{triebelinterpol}  H. Triebel, Interpolation theory, function spaces, differential operators. 2nd edition. Johann Ambrosius Barth Verlag,
Heidelberg, Leipzig, 1995.


\end{thebibliography}
\end{document}